\newcommand{\Implies}[2]{$\text{\ref{#1}}\implies\text{\ref{#2}}$}
\newcommand{\Iff}[2]{$\text{\ref{#1}}\iff\text{\ref{#2}}$}
\newcommand{\EE}{\mathbb{E}} \newcommand{\PP}{\mathbb{P}}
 \newcommand{\id}{\mathrm{id}}
\newcommand{\Ban}{V}
\newcommand{\R}{\mathbb{R}} \newcommand{\N}{\mathbb{N}}
\newcommand{\pr}{\operatorname{Prob}}
\newcommand{\APr}{\mathcal{S}} 
\newcommand{\bk}{\mathbf{k}} 
\newcommand{\bX}{\mathbf{X}} 
\newcommand{\bZ}{\mathbf{Z}}
 \newcommand{\bY}{\mathbf{Y}}
 \newcommand{\cP}{\mathcal{P}}
\newcommand{\Prob}{\mathbb{P}} \newcommand{\cp}{\mathrm{AF}}
\newcommand{\cM}{\mathcal{M}} \newcommand{\cK}{\mathrm{K}}
\newcommand{\cF}{\mathcal{F}} \newcommand{\cG}{\mathcal{G}}
\newcommand{\cH}{\mathcal{H}}
\newcommand{\Sig}[1]{\operatorname{S}({#1})}
\newcommand{\Sigo}{\operatorname{S}}
\newcommand{\sig}[2]{\operatorname{S}_{#1}({#2})}
\newcommand{\sigr}[1]{\operatorname{S}_{#1}}
\newcommand{\esigr}[1]{\bar\Sigo_{#1}}
\newcommand{\sigrn}[2]{\operatorname{S}_{#1}^{n}({#2})}
\newcommand{\sigrnhat}[2]{\tilde{\operatorname{S}}_{#1}^{n}({#2})}
\newcommand{\esigrn}[2]{\bar\Sigo_{#1}^{n}(#2)}
\newcommand{\sigrno}[1]{\operatorname{S}_{#1}^{n}}
\newcommand{\esigrno}[1]{\bar\Sigo_{#1}^{n}}
\newcommand{\Seq}{\operatorname{Seq}}
\newcommand{\ESig}[2]{\bar \Sigo_{#1}(#2)}
\newcommand{\esig}[1]{\bar\Sigo_{#1}}
\newcommand{\esigo}{\bar \Sigo}
\newcommand{\idx}{I}  \newcommand{\vs}{V}
 \newcommand{\tops}{\mathrm{U}}
\newcommand{\rank}[1]{\operatorname{rank}(#1)}
\newcommand{\Meas}[1]{\operatorname{Meas}(#1)}
\newcommand{\MeasA}[1]{\operatorname{Meas}_a(#1)}
\newcommand{\meas}[1]{\operatorname{Meas}_{#1}}
\newcommand{\prob}[1]{\operatorname{Prob}(#1)}
\newcommand{\Law}[1]{\operatorname{Law}(#1)}
\newcommand{\cX}{\mathcal{X}} 
\newcommand{\ta}[2]{{\operatorname{\mathbf{ut}}^{#1}({#2})}}
\newcommand{\ata}[2]{{\operatorname{\mathbf{t}}^{#1}({#2})}}
\newcommand{\Ta}[2]{{\operatorname{\mathbf{T}}^{#1}({#2})}}
\newcommand{\TaRank}[1]{{\operatorname{\mathbf{T}}^{#1}}}
\newtheorem{theorem}{Theorem} 
\newtheorem{corollary}{Corollary}
\newtheorem{proposition}{Proposition} 
\newtheorem{prop}[theorem]{Proposition}
\newtheorem{definition}{Definition}
\newtheorem{remark}{Remark} 
\newtheorem{lemma}{Lemma}
\newtheorem{example}{Example}[section] 
\newtheorem*{ackn}{Acknowledgements}
\begin{document}

\title{Adapted Topologies and Higher Rank Signatures}
\author{
	Patric Bonnier$ ^{\ast} $, Chong Liu$ ^{\ast} $, Harald Oberhauser$ ^{\ast} $ \\[5pt]
	\null$^\ast$ Mathematical Institute, University of Oxford \\
	{\small\texttt{Patric.Bonnier@\hspace{0.1pt}maths.ox.ac.uk}, \quad
	\small\texttt{Chong.Liu@\hspace{0.1pt}maths.ox.ac.uk}, \quad 
	\texttt{Harald.Oberhauser@\hspace{0.1pt}maths.ox.ac.uk}}
}

\date{}
\maketitle

\begin{abstract}
  The topology of weak convergence does not account for the growth of information over time that is captured in the filtration of an adapted stochastic process.
  For example, two adapted stochastic processes can have very similar laws but give completely different results in applications such as optimal stopping, queuing theory, or stochastic programming. 
  To address such discontinuities, Aldous introduced the extended weak topology, and subsequently, Hoover and Keisler showed that both, weak topology and extended weak topology, are just the first two topologies in a sequence of topologies that get increasingly finer.
  We use higher rank expected signatures to embed adapted processes into graded linear spaces and show that these embeddings induce the adapted topologies of Hoover--Keisler.
\end{abstract}

\section{Introduction}
A sequence of $\R^d$-valued random variables $(X_n)_{n\geq0}$ is said to converge weakly to a random variable $X$ if 
\begin{align}\label{eq: weak conv}
 \int_{\R^d} f(x) \Prob_n(X_n \in dx)  \rightarrow \int_{\R^d} f(x) \Prob(X \in dx)\quad \text{ for all } f \in C_b(\R^d,\R). 
\end{align}
If one replaces $\R^d$-valued random variables by path-valued random variables -- that is random maps from a totally ordered set $\idx$\label{idx} into $\R^d$ -- one arrives at the definition of weak convergence of stochastic processes. 
However, reducing stochastic processes to path-valued random variables ignores the filtration of the process.
Filtrations encode how the information one has observed in the past restricts the future possibilities and thereby encodes actionable information.
Even for discrete-time and real-valued Markov processes equipped with their natural filtrations, the weak topology is sometimes too coarse, see Example~\ref{ex:ost}.
\begin{example}[\cite{Aldous81,Beiglbock19}]\label{ex:ost}\quad
  \begin{enumerate}
  \item \label{itm:ex1}
	The value map of an optimal stopping problem,
	\begin{align} \label{eq:ost}
    \bX\coloneqq (\Omega, (\cF_t)_{t \in \idx}, \mathbb{P}, (X_t)_{t \in \idx}) \mapsto \inf_\tau\EE[L_\tau],
	\end{align}
  where the $\inf$ is taken over all stopping times $\tau \le T$ is not continuous in the weak topology if $L$ is an adapted functional that depends continuously on the sample path of $\bX$.
  This discontinuity remains even if the domain of the solution map \eqref{eq:ost} is restricted to the space of discrete-time Markov processes equipped with their natural filtration. 
\item \label{itm:ex2}
  Figure~\ref{fig:example} shows a sequence of Markov processes that converge weakly.
  However, at time $t=1$ one would make very different decisions upon observing the process for finite $n$ and its weak limit (e.g.~for portfolio allocations of investments or in optimal stopping problems such as \eqref{eq:ost}).
  The reason for this discontinuity is that although the law of the processes gets arbitrarily close for large $n$, their natural filtrations are very different. 
  \end{enumerate}
\end{example}
\begin{figure}[t]
	\begin{tikzpicture}[shorten >=1pt,draw=black!50]
	\draw[black, fill=black, thick] (0,0) circle (2pt);
	\draw[black, fill=black, thick] (3,.2) circle (2pt);
	\draw[black, fill=black, thick] (3,-.2) circle (2pt);
	\draw[black, thick] (0,0) -- (3,.2);
	\node () at (1.5,.4) {\footnotesize$ p=0.5 $};
	\draw[black, thick] (0,0) -- (3,-.2);
	\node () at (1.5,-.5) {\footnotesize$ p=0.5 $};
	\draw [black, thick, decorate,decoration={brace,amplitude=4pt}] (3.15,0.2) -- (3.15,-.2) node [black,midway,xshift=9pt] {\footnotesize$\frac{1}{n}$};
	\draw[black, thick] (3,.2) -- (6,1);
	\node () at (4.5,.9) {\footnotesize$ p=1 $};
	\draw[black, thick] (3,-.2) -- (6,-1);
	\node () at (4.5,-1) {\footnotesize$ p=1 $};
	\draw[black, fill=black, thick] (6,1) circle (2pt);
	\draw[black, fill=black, thick] (6,-1) circle (2pt);
	
	\draw[black, fill=black, thick] (7,0) circle (2pt);
	\draw[black, fill=black, thick] (10,0) circle (2pt);
	\draw[black, thick] (7,0) -- (10,0);
	\node () at (8.5,.2) {\footnotesize$ p=1 $};
	\draw[black, thick] (10,0) -- (13,1);
	\node () at (11.5,.8) {\footnotesize$ p=0.5 $};
	\draw[black, thick] (10,0) -- (13,-1);
	\node () at (11.5,-.9) {\footnotesize$ p=0.5 $};
	\draw[black, fill=black, thick] (13,1) circle (2pt);
	\draw[black, fill=black, thick] (13,-1) circle (2pt);
	\end{tikzpicture}
	\caption{A typical example of when weak convergence is insufficient. The process on the left can be made arbitrarily close to the process on the right as $ n \to \infty $.}
	\label{fig:example}
\end{figure}
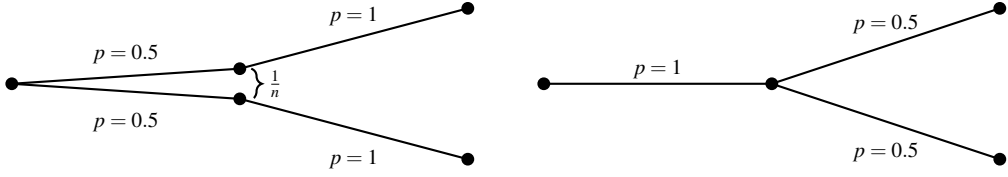

\subsection{Adapted Topologies}
Such shortcomings of weak convergence for stochastic processes were recognized and addressed in the 1970's and 1980's. 
Denote by $\APr$\label{apr} the set of adapted processes that evolve in a state space that is compact subset of $\R^d$. 
David Aldous proposed to associate with an adapted process $\bX = (\Omega, (\cF_t)_{t \in \idx}, \mathbb{P}, (X_t)_{t \in \idx}) \in \APr$ its \emph{prediction process} $\hat \bX= (\Omega, (\cF_t)_{t \in \idx}, \mathbb{P}, (\hat X_t)_{t \in \idx})$, 
\begin{align}
\hat X_t \coloneqq \Prob(X \in \cdot | \cF_t),
\end{align}
and to define a topology on $\APr$ by prescribing that two processes converge if and only if their prediction processes converge in the weak topology (that is, weak convergence in the space of measure-valued processes).
Aldous studied this topology in \cite{Aldous81} and showed that it has several attractive properties such as making the map in Example \ref{ex:ost} item \ref{itm:ex1} continuous and separating the two processes in item \ref{itm:ex2}.
Similar points were also made and further developed by a number of different researchers \cite{Vershik70, Vershik94, Lasalle18, Ruschendorf85, Veraguas_2020, eder2019compactness,Wiesel20} including ones in other communities such as economics \cite{Hellwig96}, operations research \cite{Pflug12, Pichler13, Pflug14, Pflug15, Pflug16}, and numerics \cite{Talay19} and has led to the development of topologies that are finer than the classical weak topology.
The construction of all these differ in detail, but in discrete time and under the natural filtration they lead to the same topology that Aldous originally introduced as was recently shown in \cite{Beiglbock19}.
We henceforth refer to this topology\footnote{Aldous refers to this topology as the \emph{extended weak topology}.} as the \emph{adapted topology of rank $1$} and we refer to the classic weak topology as the \emph{adapted topology of rank $0$} (denoted $ \tau_1 $ and $ \tau_0 $ respectively).

However, even the adapted topology of rank $1$ (weak convergence of the prediction process) does not characterize the full structure of adapted processes, as evidenced by Example \ref{ex:HK}
\begin{example}[Example 3.2, \cite{Hoover84}]\label{ex:HK}
  There exists two sequences of Markov chains, $(\bX_n)_n $ and $ (\bY_n)_n $, that both converge to the same process in the topology $\tau_0$ and in the topology $\tau_1$ as $n \to \infty$.
  However, the information contained in their filtrations is still different; for example $ \EE[\EE[\bX^n_4\vert \cF_3]^2\vert \cF_1]-\EE[\EE[\bY^n_4\vert \cF_3]^2\vert \cF_1] \not\to 0 $ as $n \to \infty$; see Appendix \ref{app:A} for details. 
\end{example}
Seminal work of Hoover--Keisler \cite{Hoover84} provides a definite answer: it shows the existence of a sequence of topologies $(\tau_r)_{r \ge0}$ on $\APr$ that become strictly finer as $r$ increases; $\tau_0$ is the topology of weak convergence; $\tau_1$ is Aldous' weak convergence of prediction processes, and $\bigcap_{r=0}^{\infty} \tau_r $ identifies two process if and only if they are isomorphic, see \cite{Hoover84} for the precise statement. 
We refer to $\tau_r$ as the \emph{adapted topology of rank $r$} on $\APr$.
The approach of \cite{Hoover84} is different than Aldous' approach that relies on prediction processes.
The starting point of \cite{Hoover84} is that one may specify a topology by choosing a class of functionals on pathspace, that is a subset of $ (\R^d)^\idx \rightarrow \R$, and define convergence of a sequence of processes $\bX_n$ to $\bX$ by requiring
\begin{align}\label{eq: weak via cp}
  \bX_n \rightarrow \bX  \text{, if and only if } 
  \int_{(\R^d)^I} f(x) \Prob_n(X_n \in dx)  \rightarrow \int_{(\R^d)^I} f(x) \Prob(X \in dx), 
  \end{align}
  for all $f$ in this set of functionals.
  By taking this set of functionals to be $ C_b((\R^d)^\idx, \R)$ one recovers weak convergence, but much richer classes of functionals can be constructed by iterating conditional expectations and compositions with bounded continuous functions, e.g.~$f(X(\omega)) = \EE[\cos (X_{t_1}X_{t_2})|\cF_{t_3}](\omega)$ is one such function. 
  In fact, to avoid measure-theoretic trouble, it is more convenient to work with random variables:
  one defines so-called adapted functionals $\cp$\label{cp} as maps from $\APr$ to the space of real-valued random variables, by mapping a process $\bX$ to a random variable given as above by iteration of finite marginals, conditional expectations, and continuous functions.
  The minimal number of nested conditional expectations needed to specify an element of $\cp$ induces the natural grading
  \begin{align}
  \cp = \bigcup_{r \ge 0}\cp_{r}
  \end{align}
  where $\cp_r$\label{cpr} denotes all adapted functionals that are build with $r$ nested conditional expectations. 
Hoover--Keisler showed that by defining \[\bX_n \rightarrow \bX \text{ if and only if }\EE[f(\bX_n)] \rightarrow \EE[f(\bX)] \text{ for all }f \in \cp_r,\] then the associated topologies get strictly finer as $r \rightarrow \infty $; e.g. the topology $\tau_2$ separates the two adapted processes in Example \ref{ex:HK}.

\subsection{Contribution}
Denote by $\APr=\APr(\R^d)$ the set of adapted stochastic processes $
\bX=(\Omega, (\cF_t)_{t \in \idx}, \mathbb{P}, (X_t)_{t \in \idx})$ that evolve
in $\R^d$. The main contribution of this article is to provide for every $r \ge 0$ an explicit map
\begin{align}
  \Phi_r: \APr \to \TaRank{r+1},\quad \bX \mapsto \Phi_r(\bX) 
\end{align}
from $\APr$ into a normed and graded space $\TaRank{r+1}$ such that the adapted topology of rank $r$ on $\APr$, $\tau_{r}$, arises as the
\emph{initial topology} for $\Phi_{r}$. That is $\tau_{r}$ is the coarsest
topology $\tau$ on $\APr$ that makes the map \[\Phi_{r}:(\APr,\tau) \to
(\TaRank{r+1},\|\cdot \|)\] continuous. Equivalently, the adapted topology of
rank $r$, $\tau_r$, is characterized by the universal property that any map $f$
from a topological space into $\APr$ is continuous if and only if $\Phi_{r}
\circ f$ is continuous. We highlight three consequences of this result:
\begin{description}
\item[Metrizing adapted topologies of any rank $r$.] It immediately follows that
  \begin{align}\label{eq:semimetric}
    \APr\times \APr \to [0,\infty),\quad  (\bX, \bY) \mapsto \| \Phi_{r}(\bX) - \Phi_{r}(\bY)\|
  \end{align}
  is a semi-metric on $\APr$ that induces $\tau_r$. For $r=0$ and general
  stochastic processes our results reduce to the previously known result
  \cite{Oberhauser18} that the expected signature map $\Phi_0$ can metrize weak
  convergence; for $r=1$ this adds a novel entry to the list of semi-metrics
  that induce $\tau_1$, see \cite{Beiglbock19}; for $r\ge 2$ this (semi-)metric
  seems to be the first metrization\footnote{However, we draw attention to
    forthcoming work of G.~Pammer et al.} of $(\APr,\tau_{r})$.
  Further, our results are not restricted to processes equipped with their natural filtration.

\item[Dynamic Programming.] For $r=0$, the map $\Phi_0$ reduces to the expected
  signature map. A direct application of dynamic programming shows that for a
  Markov process $\bX$, $\Phi_0(\bX)$ and consequently the semi-metric
  \eqref{eq:semimetric}, can be efficiently computed by dynamic programming. For
  $r \ge 1$, the maps $\Phi_r$ are constructed by recursion and we show this can
  be used to bootstrap dynamic programming principles, so that for any $r \ge 0$
  the map $\Phi_r$ resp.~the semi-metric \eqref{eq:semimetric} can be
  efficiently computed for Markov processes.
\item[A multi-graded ``feature map''.] The maps $\Phi_r$ embed a stochastic
  process into linear spaces $\TaRank{r+1}$ that arise via a classic free
  construction in algebra, namely \emph{the free algebra functor}. In
  particular, $\TaRank{r+1}$ has a natural multi-grading and $\Phi_r(\bX)$ use
  this to describe the interplay of the law and the filtration of the process
  $\bX$ in a hierarchical manner; analogous to how the classical moments of a
  vector-valued random variable is graded by the moment degree.
\end{description}
We believe the last point is the strongest contribution of this approach to the
existing literature since the embedding \[\bX \to \Phi_r(\bX)\] of an adapted
process $\bX$ into a multi-graded linear space $(\TaRank{r+1}, \| \cdot \|)$
delivers more than a semi-metric.
This seems to be novel even for the well-studied case of $r=1$.
For example, for $r=0$, $\Phi_0$ is just the expected signature map and many recent applications in statistics, machine learning and finance rely on the co-ordinates and the grading of $\Phi_0(\bX)$.
In Section~\ref{sec: experiments} we give a simple supervised classification example that demonstrates how expected signatures as they are currently used in machine learning, i.e.~$\Phi_0$, can yield a too coarse description even for simple Markov processes and how this is resolved by $\Phi_r$ for $r \ge 1$.
We also mention that adapted topologies (so far, via causal Wasserstein semi-metric) are finding applications in machine learning, see \cite{xu2020cot}, and the use of $\Phi_r$ in this context seems to be interesting future research venue.

\begin{remark}
  We focus on finite discrete time processes for two reasons:
  \begin{enumerate*}[label=(\roman*)]
  \item Most applications and in fact, much of the recent literature on adapted
    topologies, studies finite discrete time.

  \item The resulting signature and tensor structure that capture filtrations
    are already novel and interesting to study in finite discrete time. Some
    definitions and results immediately extend to continuous time, but others
    lead quickly to challenging research programmes;
    e.g.~for $r=1$ the prediction process $ t \mapsto \hat \bX^1_t = \PP(X \in
    \cdot|\cF_t)$ has only c\`adl\`ag trajectories, even if the sample paths of
    $t \mapsto X_t$ are continuous. C\`adl\`ag rough path theory is an area of
    ongoing research~\cite{Chevyrev19,friz2017} and the question of how
    tightness propagates through such iterated (higher rank) constructions seems
    hard due to a lack of Prohorov type results; see Section \ref{sec:tightness}
    for details.
  \end{enumerate*}
\end{remark}
\begin{remark}
  Our results are not restricted to stochastic processes evolving in compact subsets of finite-dimensional state spaces discussed above.
  By using robust signatures~\cite{Oberhauser18} adapted processes that evolve in general separable Banach space are included in our approach.
  In this non-compact case, it turns out that the Hoover--Keisler approach of specifying an adapted topology via $\cp_r$ and the natural generalization of Aldous's approach given by iterating prediction process yield in general different topologies which might of independent interest.
\end{remark}

\subsection{Outline and Notation.}
The rest of the paper is laid out as follows:
\begin{itemize}
	\item Section \ref{sec: adapted topology} recalls Hoover--Keisler's \emph{adapted functionals} $\cp= \bigcup_{r \ge 0} \cp_r$ and the adapted topology of rank $r$, $\tau_r$.
    Further, it identifies Aldous prediction $\hat X^1$ as the rank $r=1$ construction in the sequence of rank $r$ prediction process that we define as \[\hat X^{r+1}_t \coloneqq \Prob(\hat X^{r} \in \cdot | \cF_t),\quad\hat X^0 \coloneqq X.\]
    These prediction processes evolve in state spaces that have a rich structure; e.g. 
    \begin{align}\label{eq:metric}
      \Law{\hat \bX^0} &\in \Meas{ \idx \rightarrow \vs}\eqqcolon \cM_1,\\
      \Law{\hat \bX^1}&\in \Meas{ I \rightarrow \Meas{\idx \rightarrow \vs}} \eqqcolon \cM_2, \\
      \Law{\hat \bX^3} &\in \Meas{ I \rightarrow \Meas{ \idx \rightarrow \Meas{\idx \rightarrow \vs}}} \eqqcolon \cM_3.  
    \end{align}
    We refer to the spaces $\cM_r$ as \emph{rank $r$ measures}.
    Capturing their structure is the central theme of this article.
	\item Section \ref{sec: sig} discusses how an element of $\cM_r$ can be described by a multi-graded sequence of tensors.
    For $r=0$, we recall that the signature $\sigr{1}$ injects a path into the free algebra $\TaRank{1}$ that consists of sequences of tensors of increasing degree; the expected signature $\esig{1}$ injects $\cM_1$ into $\TaRank{1}$.
    To generalize this from $r=1$ to general $r\ge 1$ we first introduce the space of \emph{higher rank paths} $\vs_r$: for a linear space $\vs$ define
    \[
      \vs_{r+1} \coloneqq I \to \vs_r\quad \vs_0\coloneqq \vs.
      \]
    The \emph{rank $r$ signature} $\sigr{r}:\vs_{r} \to \TaRank{r}$ then injects a rank $r$ path into the \emph{rank $r$ tensor algebra} $\TaRank{r}$ which consists of sequences of multi-graded sequences of tensors; the \emph{rank $r$ expected signature} $\esig{r}: \cM_r \to \TaRank{r}$ provides a multi-graded description of an element of $\cM_r$ by injecting it into $\TaRank{r}$.  

	\item Section \ref{sec: top and sig} contains our main theoretical results.
    We first show that convergence in the adapted topology $\tau_r$ is equivalent to convergence in law of the rank $r$ prediction process.
    This allows us to show that the rank $r+1$ expected signature $\esig{r+1}$ applied to the rank $r$ prediction process induces the rank $r$ topology $\tau_r$.
    Hence, the map 
    \begin{align}
      \Phi_r: \APr \to \TaRank{r+1},\quad \Phi_r(\bX) \coloneqq \esig{r+1}(\Law{\hat X^r})
    \end{align}
    induces $\tau_{r}$ as initial topology on $\APr$.
  \item Section \ref{sec: experiments} shows that the maps $\Phi_r(\bX)$ can be efficiently computed by dynamic programming when $\bX$ is a Markov process.
    We provide a  Python implementation\footnote{Available at \url{https://github.com/PatricBonnier/Higher-rank-signature-regression}} of the resulting algorithms and use it for a simple numerical experiment that demonstrates the advantages of $\Phi_1$ against the usual expected signature $\Phi_0$. 
  \item Appendix \ref{app:A} contains details for Example \ref{ex:HK}, Appendix \ref{app:tensor algebras} contains some details on the construction of higher rank tensor algebras, and Appendix \ref{app:normalization} contains some background on the robust signature and how it can be used to overcome problems arising from non-compactness.
\end{itemize}

  \begin{center}

    \renewcommand{\arraystretch}{1.1}
    \begin{longtable}{llr}\toprule
      Symbol & Meaning & Page\\
      \toprule \multicolumn{2}{c}{Spaces}
      \\
      \midrule
      $\Ban$ & a separable Banach space & \pageref{bs}\\
      $\tops$ & a topological space & \pageref{tops}\\
      $\APr(\tops)$ & the set of adapted stochastic processes in $ \tops $ & \pageref{apr} \\
      $\underline \Omega$& an adapted probability space $\underline \Omega = (\Omega,\mathbb{P}, (\cF_t))$ & \pageref{omega} \\
      $\bX=(\underline \Omega, X) \in \APr(\tops)$& an adapted process on the stochastic base $\underline \Omega$ & \pageref{bX}\\
      $\Meas{\tops}$ & Borel measures on $\tops$ & \pageref{Meas} \\
      $\pr(\tops)$ & Borel probability measures on $\tops$ & \pageref{pr} \\
      $ \idx $ & A finite totally ordered set (time) & \pageref{idx} \\
      $ \big( I \to \tops\big) $ & the space of sequences in $ \tops $ indexed by $ I $ & \pageref{paths} \\
      \midrule
      \multicolumn{2}{c}{The Adapted Topology of Rank $r$} & \null \\
      \midrule
      $\cp$ & adapted functionals, $f(\bX)$ is a real-valued random variable & \pageref{cp} \\ 
      $\cp_r=\{f \in \cp\,|\, \rank{f}\leq r\}$ & adapted functionals with rank less than $r$ & \pageref{cpr} \\ 
      $\tau_r$ & the adapted topology of rank $r$ on $\APr(\tops)$ & \pageref{adapted topology} \\
      $\hat \tau_r$ & the extended weak topology of rank $r$ on $\APr(\tops)$ & \pageref{extended weak topology}\\
      \midrule
      \multicolumn{2}{c}{Paths and Measures of Rank $r$} & \null \\
      \midrule
      $ \tops_r $ & the space of rank $ r $ paths with state space $ \tops $ & \pageref{topsr} \\
      $ \cM_r(\tops) $ & the space of rank $ r $ Borel measures on $ \tops $ & \pageref{cmr} \\
      $ \cP_r(\tops) $ & the space of rank $ r $ Borel probability measures on $ \tops $ & \pageref{prr} \\
      \midrule
      \multicolumn{2}{c}{(Expected) Signature of Rank $r$} & \null \\
      \midrule
      $ \Ta{r}{\Ban}$ & The rank $ r $ tensor algebra &\pageref{rtensor} \\
      $\hat \bX^r$ & the rank $r$-prediction process $\hat \bX_t^r:= \mathbb{P}(\hat \bX^{r-1} \in \cdot|\cF_t)$ of $\bX \in \APr$ & \pageref{bXr} \\
      $\sigr{r}$ & the rank $r$ signature map $\sigr{r}: \Ban_r \to \Ta{r}{\Ban}$ & \pageref{sigr} \\
      $\esig{r}$ & the rank $r$ expected signature map $\esig{r}: \cM_r(\Ban) \to \Ta{r}{\Ban}$& \pageref{esigr} \\
      $\bar \bX^r$ & the rank $r$ conditional expected signature $\bar \bX^r_t:=\EE[\bar \bX^{r-1}|\cF_t]$ & \pageref{barbxr} \\
      $d_r(\bX,\bY)$ & the rank $r$ adapted signature distance between $\bX$ and $\bY$ & \pageref{distance}\\
      \bottomrule
    \end{longtable}
  \end{center}

\section{The Adapted Topology $\tau_r$ and the Extended Weak Topology $\hat \tau_r$}\label{sec: adapted topology}
In this section we recall work of Hoover--Keisler \cite{Hoover84}, and define adapted functionals $\cp_r$ of rank $r$.
We then revisit Aldous \cite{Aldous81} notion of a prediction process, and generalize it to rank $r$ prediction processes; that is we associate with every element $\bX \in \APr(\tops)$ the sequence $(\hat \bX^r)_{r \ge 0}$ of rank $r$ prediction processes.
Both of the resulting objects -- adapted functionals of rank $r$ resp.~prediction processes of rank $r$ -- can be used to define a topology on the space $\APr(\tops)$ of adapted processes with state space $\tops$ and intuitively capture more structural information of the filtration as $r$ increases.
We refer to these two topologies as the adapted topology $\tau_r$ of rank $r$ and the extended weak topology of rank $r$. 

\begin{definition}

  Denote by $\idx=\{0,1,\ldots,T\}$. 
  A filtered probability space is a triple $\underline{\Omega}=(\Omega,\mathbb{P}, (\cF)_{t\in \idx})$\label{omega} consisting of a sample space $\Omega$, a probability measure $\mathbb{P}$, and a filtration $(\cF_t)_{t \in \idx}$.
  An adapted stochastic process $\bX = (\underline \Omega, X)$ with state space $\tops$\label{tops} consists of a filtered probability space $\underline \Omega$ and a map $X:\Omega \times \idx \rightarrow \tops$ such that $X_t$ is $\cF_t$-measurable for each $t \in \idx$.
  Denote with $\APr(\tops)$ the space of adapted stochastic processes that evolve in discrete time $\idx$ in a state space $\tops$, 
  \begin{align}
   \APr(\tops) \coloneqq \{ \bX \,|\, \bX \text{ is an adapted stochastic process indexed by $\idx$ that evolves in } \tops \}. 
  \end{align}
  We also set \[\Law{\bX}\coloneqq\Prob \circ X^{-1} \text{ for }\bX=(\Omega,\mathbb{P}, (\cF)_{t\in \idx},X).\]
  With the usual slight abuse of notation we use throughout the same symbol $\EE$ for the expectation although the elements of $\APr(\tops)$ can be supported on different adapted probability spaces.
\end{definition}
\subsection{Adapted Functionals}
A natural way to define a topology on $\APr(\tops)$ is by specifying some set of functionals and requiring that 
\begin{align}
\bX_n \rightarrow \bX \text{ if }\EE[f(\bX_n)] \rightarrow \EE[f(\bX)]  \text{ as } n \to \infty
\end{align}
for every $f$ in this set of functionals.
By choosing the set of functionals to be \[\{f\,|\,f(\bX)= g(X_{t_1},\ldots, X_{t_n}),\, g \in C_b(\tops^n,\R),\, (t_1,\ldots,t_n) \in \idx^n\}\] one recovers classical weak convergence. 
In view of the above examples, it is natural to construct a wider class of functionals by using the conditional expectation in order to capture some of the information contained in the filtration. 
\begin{definition}\label{def: adapted functionals}
  We define a set of maps $\cp$ from $\APr(\tops)$ into the set of real-valued random variables inductively: 
  \begin{enumerate}
  \item 
    if $t_1,\ldots,t_n \in \idx$ and $f \in C_b(\tops^n,\R)$, then $\bX \mapsto f(X(t_1),\ldots,X(t_n)) \in \cp$,
  \item 
    if $f_1,\ldots,f_n \in \cp$ and $f \in C_b(\R^n,\R)$, then $\bX \mapsto  f(f_1(\bX),\ldots,f_n(\bX)) \in \cp $,
  \item
    if $f \in \cp$ and $t \in \idx$ then $\bX \mapsto \EE[f(\bX)|\cF_t] \in \cp$.
  \end{enumerate}
  We refer to the elements of $\cp$ as adapted functionals\footnote{In \cite{Hoover84} $\cp$ are called conditional processes.}.
\end{definition}
\begin{remark}
  For a given $\bX = (\Omega,\mathbb{P}, (\cF)_{t\in \idx},X)$ and $f \in \cp$, $f(\bX) $ is in $ L^\infty(\Omega, \PP)$, hence the image set of $f \in \cp$ is $\prod_{\bX \in \APr(\tops)} L^\infty (\Omega^{\bX}, \PP^{\bX})$ where
  we write $\bX = (\Omega^{\bX},\mathbb{P}^{\bX}, (\cF^{\bX})_{t\in \idx},X)$ to emphasize the dependence of the underlying filtered probability spaces on $\bX$.
\end{remark}
Intuitively, the more times the conditional expectation is iterated the more of the evolutional constraints that are encapsulated in the filtration are exposed by the functionals in $ \cp $. 
Indeed, Figure \ref{fig:example} shows two processes that can not be distinguished without at least one iteration, and in Example \ref{ex:HK}, at least two iterations are required.
With this in mind, we define the rank $r$ of an adapted functional $f \in \cp$ as the minimal number of times the conditional expectation is iterated in the construction of $f$.
This number $r$ of conditional expectations gives $ \cp $ a natural grading. 
\begin{definition}\label{def: rank of adapted functionals}
	Define $\operatorname{rank}: \cp \rightarrow \N \cup \{0\}$ as 
	\begin{enumerate}
	\item
	  $\rank{f}=0$ if $f(\bX)=g(X_{t_1},\ldots,X_{t_n})$ for $g \in C_b(\tops^n,\R)$ 
	\item
	  $\rank{f}=\max (\rank{f_1},\ldots,\rank{f_n})$ if $f(\bX)=g(f_1(\bX), \ldots,f_n(\bX))$, $ g \in C_b(\R^n,\R) $,  $f_1,\ldots,f_n \in \cp$, 
	\item
	 $\rank{f}=\rank{g}+1$ if $f(\bX)=\EE[g(\bX)\vert \cF_t]$ for $g \in \cp$.
	\end{enumerate}
	We call 
	\begin{align}
	 \cp_r:=\{ f \in \cp | \rank{f}\leq r\} 
	\end{align}
	the set of adapted functionals of rank less than $r$.
\end{definition}
\begin{remark}
Following Definition \ref{def: adapted functionals}, every $f \in \cp$ can be obtained by repeating steps 1, 2 and 3 finitely many times. Let $\mathfrak{r}_f$ denote such an iterative procedure which leads to the construction of $f \in \cp$, and  let $|\mathfrak{r}_f|$ denote the total number of times step 3 (taking conditional expectation) appears in $\mathfrak{r}_f$. Note that $f$ does not uniquely determine $\mathfrak{r}_f$; for instance, $f(\bX) = g(X_{t_1}, \ldots,X_{t_n}) = \EE[g(X_{t_1}, \ldots,X_{t_n})|\cF_T]$ holds for all $\bX \in \APr(\tops)$ if $g$ is a constant function. So, strictly speaking, the map $\rank{f}$ is given by $\rank{f} := \min\{|\mathfrak{r}_f|: \mathfrak{r}_f \text{ is a representation of f} \}$. However, the above (strictly speaking, not well-defined) Definition \ref{def: rank of adapted functionals} is more intuitive.
\end{remark}

\subsection{Prediction Processes of Rank $r$}
We now revisit Aldous' notion of prediction process.
By introducing ``prediction processes of prediction processes'' one arrives at another natural sequence of objects (prediction processes of rank $r$) that capture more structure of the filtration. 
\begin{definition}\label{def: higher rank prediction process}
	Let $\bX=(\underline \Omega, X) \in \APr(\tops)$ \label{bX}. The adapted stochastic processes $(\hat \bX^r)_{r \ge 0}$ of $\bX$ are defined as
	$\hat \bX^r=(\underline \Omega, \hat X^r)$ \label{bXr} with $\hat X^r$ given inductively as 
	\begin{align}
	&\hat X^0:= X \text{ and }\hat X^{r+1}:= (\mathbb{P}(\hat X^r \in \cdot|\cF_t))_{t \in \idx}.
	\end{align}
	We call $\hat \bX^r$ the rank $r$ prediction process of $\bX$ and we denote with $\tops_r$ the state space of the process $\hat X^r$.
\end{definition}
An immediate but useful identity that we use several times is that 
\begin{align}
  \hat X^r_0 = \Law{\hat X^{r-1}}.
\end{align}
\subsection{The Adapted and the Weak Extended Topology of Rank $r$}
We now have two natural ways to generalize the definition of weak convergence so that it takes the filtration into account: one by replacing continuous bounded functions by adapted functions; one by replacing weak convergence of the process by weak convergence of the prediction process.
\begin{definition}
  \label{def: adapted distribution and topology}
  \label{def: higher rank extended weak topology}
  Let $r\ge 0$. We say that two adapted processes $\bX \in \APr(\tops)$ and $\bY \in \APr(\tops)$ have the same adapted distribution up to rank $r$, in notation $\bX \equiv_r \bY$ \label{equiv}, if 
  $$
  \EE[f(\bX)] = \EE[f(\bY)] \quad \forall f \in \cp_r.
  $$
  Moreover, we say that a sequence $(\bX^n)_{n\geq 0} \subset \APr(\tops)$ converges to $\bX \in \APr(\tops)$ in 
  \begin{enumerate}
  \item the extended weak topology of rank $r$ \label{extended weak topology} if 
    \begin{align}
     \lim_{n \to \infty} \EE[f(\hat \bX^{r,n})] = \EE[f(\hat \bX^r)] \quad  \forall f \in C_b(\tops_r,\R)
    \end{align}
    where $U_r$ denotes the state space of process $\hat \bX^r$.
  \item the adapted topology of rank $r$ \label{adapted topology} if
    \begin{align}
       \lim_{n \to \infty }\EE[f(\bX_n)] = \EE[f(\bX)] \quad \forall f \in \cp_r. 
    \end{align}
  \end{enumerate}
  The extended weak topology on $\APr(\tops)$ is denoted by $\hat \tau_r$ and the adapted topology of rank $r$ by $\tau_r$.
\end{definition}
In Section~\ref{sec: top and sig} we show that \[(\APr(\tops), \tau_r)=(\APr(\tops), \hat \tau_r)\] whenever $\tops$ is compact but that for  non-compact subsets $\tops$ of Banach spaces, $\tau_r$ is in general coarser than $\hat \tau_r$; that is $\tau_r \subsetneq \hat \tau_r$.

\section{(Expected) Signatures of Rank $r$}\label{sec: sig}
In the previous Section~\ref{sec: adapted topology} we have introduced two topologies on $\APr(\tops)$, $\tau_r$ and $\hat \tau_r$.
We expect both to capture more or less the same structure (except for some subtle issues when $\tops$ is non-compact).
However, an attractive property of the extended weak topology $\hat \tau_r$ of rank $r$ is that it is specified by classical weak convergence of a stochastic process, namely weak convergence of the rank $r$ prediction process $\hat \bX^r$.
For $r=0$ it is known that weak convergence of a stochastic processes -- such as the prediction process $\hat \bX^0$ -- can be characterized as convergence of the expected signatures, \cite{Oberhauser18}.
This suggests that a similar approach can be fruitful in capturing the weak convergence of the higher rank prediction processes $\hat \bX^r$. 

Unfortunately, for $r \ge 1$ the rank $r$ prediction processes evolve in very large state spaces (of laws) that have a rich and nested structure which makes the use of expected signatures less straightforward. 
In this section we introduce higher rank (expected) signatures that are capable of capturing the law of such processes and their nested structure.
The key is to think about so-called higher rank paths that arise by currying multi-parameter paths.

\subsection{Recall: Moment Sequences of Random Variables} \label{sec:recall duality}
Before we discuss signatures it is instructive to briefly revisit classical moment sequences and fix some notation. 
\subsubsection{Moments and Duality}
Recall that for any compact set $\tops \subseteq \Ban=\R^d$, the moment map 
\begin{align}\label{eq:mmmap}
\Meas{\tops} \hookrightarrow {\prod_{m\geq0} \Ban^{\otimes m}},\quad \mu \mapsto \left(\int x^{\otimes m} \mu(dx)\right)_{m \ge 0}   
\end{align}
is an injection from the space $ \Meas{\tops} $\label{Meas} of signed Borel measures on $ \tops $ to $ \prod_{m\geq0} \Ban^{\otimes m} $.
A short way to prove the injection \eqref{eq:mmmap} is to recall that the dual of $\Meas{\tops}$ is the space of $C_b(\tops,\R)$. 
Under this duality, injectivity of the map~\eqref{eq:mmmap} amounts to the density of monomials in $C_b(\tops,\R)$  and the latter follows immediately by the Stone--Weierstrass Theorem.
Although this is not how the proof that moments can characterize laws is usually presented, this approach is very powerful when one tries to develop a similar argument on non-compact spaces, see~\cite{Oberhauser18}.
This duality is also the main reason to work with the linear space $\Meas{\tops}$ although we are ultimately interested in the convex set $\pr(\tops)$ of probability measures.

In particular, when restricted to the set of probability measures $\pr(\tops) \subset\Meas{\tops}$\label{pr}, the injection \eqref{eq:mmap} shows that the law of a $\tops$-valued random variable $X$, $\mu(\cdot)= \Prob(X \in \cdot)$, is characterized as an element of $\prod_{m\geq0} \Ban^{\otimes m}$,
\begin{align}
  \left(\EE\left[\frac{X^{\otimes m}}{m!}\right]\right)_{m \ge 0} \in  \prod_{m\geq0} \Ban^{\otimes m}.
\end{align}
Note that above we have included the factorial decay $m!$.
This is convenient since these terms arise in the Taylor expansion of the exponential function.
In the case of compactly supported random variables this does not make a difference but much more care need to be taken in the non-compact case and we return to this discussion in Section~\ref{sec:tensor normalization}.  

\subsubsection{Tensors, Moments, Exponentials}\label{sec:tensors}
The tensor exponential provides a concise, coordinate-free way of expressing moment relations. 
\begin{definition}
Let $V$ \label{bs} be a Banach space. Define the exponential map  
\[\exp: \Ban \mapsto {\prod_{m\geq0} \Ban^{\otimes m}}, \quad v \mapsto \left(\frac{v^{\otimes m}}{m!}\right)_{m\ge 0}.\] 
\end{definition}
To get used to this notation, it is instructive to apply the above exponential map with $V=\R^d$: spelled out in coordinates, the exponential map reduces to  the usual moment map,
\begin{align}
  x=(x^i)_{i=1,\ldots,d} \in \R^d \mapsto \left(\frac{x^{\otimes m}}{m!}\right) \simeq \left(\frac{x^{i_1} \cdots x^{i_m}}{m!}\right)_{ 1 \leq i_1,\ldots,i_m\leq d }.
\end{align}
Applied to a random variable $X$ taking values in a compact subset $\tops \subset \R^d$, all the moments of $X$,
\begin{align}\label{eq:mmap}
  \EE[\exp X]=(1, \EE[ X], \frac{1}{2!}\EE[ X^{\otimes 2}], \ldots)_{m \ge 0} \in  {\prod_{m\geq0} \Ban^{\otimes m}} 
\end{align}
are given as the expected value of the $ {\prod_{m\geq0} \Ban^{\otimes m}}$-valued random variable $\exp(X)$.
Weak convergence is then characterized as convergence of the expected value of the tensor exponential.
\begin{proposition}\label{prop: weak convergence and moments for random variable}
  Let $(X_n)$ be a sequence of random variables that take values in a compact subset $\tops \subset \R^d$.
  Then $X_n$ converges weakly to a random variable $X$ if and only if
 \begin{align}
 \EE[\exp X_n] \rightarrow \EE[\exp X] \text{ as } n \to \infty
\end{align}
where convergence on $ \prod_{m\geq0} (\R^d)^{\otimes m}$ is defined as convergence on each degree $(\R^d)^{\otimes m}$.
\end{proposition}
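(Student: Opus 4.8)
The plan is to exploit the duality between $\Meas{\tops}$ and $C_b(\tops,\R)$ emphasized in the preceding subsection: on the compact set $\tops$ the monomials are dense in $C_b(\tops,\R)$ by Stone--Weierstrass, and this density is exactly what upgrades the injectivity of the moment map into the claimed topological equivalence. I write $\mu_n \coloneqq \Law{X_n}$ and $\mu \coloneqq \Law{X}$, which are probability measures supported on $\tops$, and recall that by definition $\EE[\exp X_n]=\bigl(\tfrac{1}{m!}\int x^{\otimes m}\,\mu_n(dx)\bigr)_{m \ge 0}$. Thus convergence "on each degree $m$" means precisely that each normalized moment $\int x^{\otimes m}\,\mu_n(dx)$ converges to $\int x^{\otimes m}\,\mu(dx)$ in $(\R^d)^{\otimes m}$, and the whole statement reduces to: convergence of all moments is equivalent to weak convergence.

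For the forward implication I would argue that if $X_n \to X$ weakly, then for each fixed $m$ and each multi-index $(i_1,\dots,i_m)$ the map $x \mapsto x^{i_1}\cdots x^{i_m}$ is a polynomial, hence continuous, and is bounded on the compact set $\tops$, so it belongs to $C_b(\tops,\R)$. Applying the definition of weak convergence coordinate by coordinate then yields $\EE[X_n^{\otimes m}] \to \EE[X^{\otimes m}]$ for every $m$, which is exactly convergence of $\EE[\exp X_n]$ on each degree.

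For the converse I would first note that every polynomial $p$ is a finite linear combination of monomials, each a coordinate of some finite tensor power, so convergence on each degree immediately gives $\int p\,\mu_n(dx) \to \int p\,\mu(dx)$ for every polynomial $p$. Then, fixing $f \in C_b(\tops,\R)$ and $\varepsilon>0$, Stone--Weierstrass furnishes a polynomial $p$ with $\|f-p\|_\infty < \varepsilon$, and since $\mu_n,\mu$ are probability measures one estimates
\[
\left|\int f\,\mu_n(dx) - \int f\,\mu(dx)\right| \le \int |f-p|\,\mu_n(dx) + \left|\int p\,\mu_n(dx) - \int p\,\mu(dx)\right| + \int|p-f|\,\mu(dx).
\]
The first and last terms are at most $\varepsilon$ (using that the total mass is $1$) and the middle term tends to $0$, so $\limsup_n \bigl|\int f\,\mu_n - \int f\,\mu\bigr| \le 2\varepsilon$; letting $\varepsilon \to 0$ gives weak convergence.

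The only genuine input is the compactness of $\tops$, which I expect to enter twice and which is where the real care lies: it makes every monomial bounded, so that monomials are admissible test functions for weak convergence, and it is what licenses Stone--Weierstrass to give density of polynomials in $C_b(\tops,\R)$. The main obstacle — really the only step with content — is the converse approximation argument, where one must pass from convergence of moments to convergence against arbitrary continuous test functions; here the uniform bound on total mass (all measures have mass $1$) is essential to control the approximation error uniformly in $n$. I note that on a non-compact state space neither boundedness of monomials nor Stone--Weierstrass density is available for free, which is precisely the difficulty that the robust signature is later introduced to circumvent.
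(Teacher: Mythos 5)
Your proof is correct, but it follows a genuinely different route from the paper's. The paper argues softly: compact support gives tightness, so by Prohorov's theorem it suffices to identify subsequential weak limits, and any such limit $Y$ satisfies $\EE[p(Y)]=\EE[p(X)]$ for all polynomials $p$, whence $\Law{Y}=\Law{X}$ by density of polynomials in $C(\tops,\R)$. You instead prove the converse implication directly and quantitatively: convergence of moments gives convergence against polynomials, and the Stone--Weierstrass approximation $\|f-p\|_\infty<\varepsilon$ together with the uniform mass bound $\mu_n(\tops)=\mu(\tops)=1$ yields the $3\varepsilon$-estimate
\[
\limsup_n \left|\int f\,d\mu_n - \int f\,d\mu\right| \le 2\varepsilon
\]
for every $f\in C_b(\tops,\R)$. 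Both proofs invoke compactness through Stone--Weierstrass, but your argument is more elementary (no Prohorov, no subsequence extraction) and makes the error propagation explicit, while the paper's compactness-plus-identification skeleton is the one that recurs later in the article (e.g.\ in the proof of Proposition~\ref{prop: MMD and rank r extended weak topology}, where a compactness hypothesis plus injectivity of the embedding identifies the limit), so it serves as a template for the higher rank results in a way the direct estimate does not. You also spell out the easy forward direction (monomials are bounded continuous on a compact set), which the paper leaves implicit; and your closing remark correctly identifies why both uses of compactness fail in the non-compact case, which is exactly the gap the robust signature of Section~\ref{sec:tensor normalization} is designed to fill.
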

\begin{proof}
  The assumption of compact support implies tightness, hence the statement follows by Prohorov's theorem if one shows that if $(X_n)$ converges weakly along a subsequence to $Y$, then $Y$ equals $X$ in law. 
  But if $X_{n_k} \rightarrow Y$ weakly as $k \rightarrow \infty$, then by assumption $\lim_k\EE[p(X_{n_k})] = \EE[p(Y)]$ for any polynomial $ p $.
  The assumption also implies that $\lim_{n}\EE[p(X_n)] = \EE[p(X)]$, hence
  \[
\EE[p(X)]= \EE[p(Y)]
  \]
  for any polynomial $p$. Since polynomials are dense in $C(\tops,\R)$, this implies that $\Law{X}=\Law{Y}$. 
\end{proof}
To put the above into the context of the rest of this paper, note that these results can be reformulated as saying that the topology of weak convergence on the space of random variables that take values in a compact state space $\tops \subset \R^d$ is the initial topology of the map  
\begin{align}
  (\Omega,\cF,X) \mapsto \varphi \left( (\Omega, \cF, X) \right) \coloneqq \left( \EE \left [ \frac{X^{\otimes m}}{m!} \right] \right)_{m \ge 0} \in {\prod_{m\geq0} \Ban^{\otimes m}}, 
\end{align}
resp.~the weak topology is induced by the (semi-)metric
\begin{align}
  (\Omega_1,\cF,X) \times (\Omega_2,\cG,Y) \mapsto
  \| \varphi( (\Omega_1, \cF, X)) -  \varphi( (\Omega_2, \cG, Y)) \|.
\end{align}

To derive the analogous statement for stochastic processes, the first step is to find a suitable replacement for the tensor exponential $\exp$ to lift a path into $ \prod_{m\geq0} \Ban^{\otimes m}$ -- this leads to the notion of the signature.
\subsection{Signatures as Non-Commutative Exponentials}\label{sec:signatures}
To apply a similar reasoning to paths rather than vectors, one needs to take the sequential order into account as time progresses.
To do so we use that the linear space of tensors, $\prod_{m\geq 0} \Ban^{\otimes m}$, carries a natural non-commutative product.
\begin{definition}
For $s=(s_m)_{m\geq 0}, t=(t_m)_{m\geq 0} \in \prod_{m\geq 0} \Ban^{\otimes m}$ define 
\begin{align}\label{eq:tensor product}
  s \cdot t \coloneqq \left(\sum_{i=0}^m s_it_{m-i}\right)_{m \ge 0} \in \prod_{m\geq0} \Ban^{\otimes m}. 
\end{align}
We refer to $s \cdot t$ as as the so-called {tensor convolution product} of $s$ and $t$.
\end{definition}
To account of the sequential order in a path $x(0),x(1),\ldots,x(T)$, we now simply lift the increment of a path $x(t+1)-x(t)$ at time $t$ into $\prod_{m\geq0} \Ban^{\otimes m}$ via $\exp(x(t+1)-x(t))$, and then use the tensor convolution product~\eqref{eq:tensor product} to ``stitch these lifted increments together''.
For our purposes it turns out to be useful to first augment a path with an additional time-coordinate, that is instead of increments $x(t+1)-x(t) \in \Ban$ we consider increments \[\Delta_t x\coloneqq(t+1,x(t+1)) - (t,x(t)) = (1, x(t+1)-x(t)) \in \R \oplus \Ban\] and use the tensor exponential to embed these increments into $\prod_{m\geq0} (\R\oplus\Ban)^{\otimes m}$.
A final but important observation is that it is better to work with a slightly smaller space $\Ta{1}{\Ban} \subset  \prod_{m \ge 0} (\R \oplus \Ban)^{ \otimes m}$.
The main reason is that on this smaller space one can canonically lift a norm on $\Ban$ to a norm on $\Ta{1}{\Ban}$; see Appendix \ref{app:tensor algebras} and \ref{app: signature, feature map and MMD} for details on $\Ta{1}{\Ban}$. 

Putting everything together results in the definition of the (discrete time) signature,
\begin{definition}\label{def:sig1}
  Let $ \Ban $ be a Banach space and $\idx=\{0,1,\ldots,T\}$.
  The (rank $1$) signature map is defined as
\begin{align}
\Sigo:(\idx \to \Ban\big) \label{paths} \rightarrow \Ta{1}{\Ban}, \quad  x \mapsto \prod_{t \in \idx}\exp \Delta_tx.
\end{align}
where $\Delta_0x \coloneqq (1, x(0))$ and $\Delta_tx \coloneqq (1,x(t)-x(t-1)) \in \R \oplus \Ban$.
The (rank $1$) expected signature map is defined as
\begin{align}
	\esigo:\Meas{ \idx \to \Ban} \rightarrow \Ta{1}{\Ban}, \quad    
	\mu \mapsto \int_{x \in \Ban^{\idx}} \Sigo(x) \mu(dx). 
\end{align}

\end{definition}
 A guiding principle is that the signature of a path is the natural generalization of the monomials of a vector; resp.~the expected signatures of a stochastic process is the natural generalization of the moment sequence of a vector-valued random variable.
 Indeed, by taking $|\idx|=2$, the above equation recovers the classical tensor exponential exponential (with one additional coordinate for time), \[\Sig{x_1,x_0} =\exp(\Delta x) = \exp( (x_1-x_0,1) )  = 1 + \Delta x + \frac12 (\Delta x)^{\otimes 2} + \frac16 (\Delta x)^{\otimes 3} + \cdots.\]
 From this point of view, the following Theorem is then not surprising.
\begin{theorem} \label{cor:injective}
  Let $\Ban$ be a Banach space and $\tops \subset \Ban$ compact. 
  \begin{enumerate}
  \item \label{itm:inj}
    The map $\mathrm S: \big(\idx \to \Ban\big) \rightarrow \Ta{1}{\Ban}$ is injective.
  \item \label{itm:dense} 
    The family of linear signature functionals
    \begin{align}
      \{ x \mapsto \langle l,\Sig{x} \rangle \, \colon \, l \in \bigoplus_{m\geq 0} (\Ban^{\otimes m})^\star \} 
    \end{align}
    is dense in $ \mathrm C(\idx \to \tops,\R) $ with the uniform norm.
    \item \label{itm:char} 
      The map $\esigo: \Meas{\idx \to \tops} \rightarrow \Ta{1}{\Ban}$ is injective. 

  \end{enumerate}
\end{theorem}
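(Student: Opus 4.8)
The plan is to prove the three claims in order, with essentially all of the work going into the injectivity of $\Sigo$ (claim~\ref{itm:inj}); claims~\ref{itm:dense} and~\ref{itm:char} then follow by a Stone--Weierstrass argument and a duality argument, exactly mirroring the moment/duality discussion of Section~\ref{sec:recall duality}.

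For claim~\ref{itm:inj} I would recover the path $x=(x(0),\ldots,x(T))$ from $\Sigo(x)=\prod_{t}\exp\Delta_t x$ by reading off suitable coordinates. Writing each increment as $\Delta_t x=(1,v_t)=e_0+\tilde v_t$, with $e_0=(1,0)$ the time direction and $\tilde v_t=(0,v_t)$ its space part, the crucial feature is that every increment contributes exactly one unit of the strictly increasing time coordinate. For $\phi\in\Ban^\star$ and $k\ge 0$ consider the functional $L_{k,\phi}=(e_0^\star)^{\otimes k}\otimes(0\oplus\phi)$ extracting, in degree $k+1$, the coefficient of the word ``$k$ time-letters followed by one space-letter''. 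A direct expansion of the convolution product $\prod_t\exp\Delta_t x$ shows that
\[(k+1)!\,L_{k,\phi}(\Sigo(x))=\sum_{i=0}^{T}\phi(v_i)\big[(i+1)^{k+1}-i^{k+1}\big],\]
the point being that the last tensor slot can only be a space-letter drawn from the last non-empty block of the product. Since the functions $p\mapsto(i+1)^p-i^p$, $i=0,\ldots,T$, are linearly independent linear combinations of the pure exponentials $p\mapsto c^p$ with $c\in\{1,\ldots,T+1\}$, letting $p=k+1$ range over $T+1$ distinct values $\ge T+1$ yields an invertible (generalized Vandermonde) system. Hence $\phi(v_i)$, and therefore $v_i\in\Ban$ (using that $\Ban^\star$ separates points), is determined by $\Sigo(x)$ for every $i$, so $\Sigo$ is injective. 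I expect this invertibility step --- disentangling the product of exponentials through the time grading --- to be the main obstacle, and it is the one place where the time augmentation $\Delta_t x=(1,\,\cdot\,)$ is genuinely used.

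For claim~\ref{itm:dense} I would first record that $\Sigo(x)$ is group-like, being a product of exponentials of degree-one (hence primitive) elements with respect to the deconcatenation coproduct on $\Ta{1}{\Ban}$ (cf.\ Appendix~\ref{app:tensor algebras}). Consequently the product of two linear signature functionals is again a linear signature functional (the shuffle identity), so these functionals form a subalgebra of $\mathrm{C}(\idx\to\tops,\R)$ which is unital, the unit coming from the degree-zero component that is identically $1$. By claim~\ref{itm:inj} this subalgebra separates points, and $\idx\to\tops\cong\tops^{T+1}$ is compact since $\tops$ is compact and $\idx$ finite. The real Stone--Weierstrass theorem then gives uniform density.

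Finally, claim~\ref{itm:char} is a duality argument. If $\esigo(\mu)=\esigo(\nu)$, then pairing with each $l$ gives $\int\langle l,\Sigo(x)\rangle\,\mu(dx)=\int\langle l,\Sigo(x)\rangle\,\nu(dx)$, so $\mu$ and $\nu$ integrate every linear signature functional equally. By the density from claim~\ref{itm:dense}, together with the bounded total variation of the finite signed measures, they integrate every $f\in\mathrm{C}(\idx\to\tops,\R)$ equally, and the Riesz representation theorem forces $\mu=\nu$. The only points needing care here are the interchange of $\langle l,\,\cdot\,\rangle$ with the $\Ta{1}{\Ban}$-valued integral --- legitimate because $\Sigo$ is bounded on the compact domain $\idx\to\tops$ --- and, as in claim~\ref{itm:inj}, the reduction to scalars via $\Ban^\star$.
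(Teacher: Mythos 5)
Your proposal is correct, and it reaches the result by a genuinely different route from the paper for items \ref{itm:inj} and \ref{itm:dense}: the paper simply cites Chen \cite[Theorem 1]{Chen58} for injectivity and Fliess \cite[Corollary 4.9]{fliess76} for density, whereas you prove both from scratch; your item \ref{itm:char} is the same duality argument the paper gives. Your key identity
\begin{align}
(k+1)!\,L_{k,\phi}(\Sigo(x))=\sum_{i=0}^{T}\phi(v_i)\bigl[(i+1)^{k+1}-i^{k+1}\bigr]
\end{align}
checks out: the multinomial-weighted count of sorted index sequences of length $k+1$ with maximum exactly $i$ is $(i+1)^{k+1}-i^{k+1}$, and since each degree component of $\Sigo(x)$ is a finite sum of elementary tensors, the pairing with the purely algebraic functional $L_{k,\phi}$ needs no continuity assumptions. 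The inversion step is also fine, though you can avoid the appeal to generalized Vandermonde determinants: writing $S_p=\sum_{c=1}^{T+1}b_c\,c^p$ with $b_{T+1}=\phi(v_T)$ and $b_c=\phi(v_{c-1})-\phi(v_c)$, the choice $p=1,\ldots,T+1$ gives a diagonal matrix times an ordinary Vandermonde matrix, which is invertible by the classical result. What your approach buys is a self-contained, purely combinatorial proof valid in any Banach space directly in discrete time, with explicit functionals reconstructing the path; what the paper's citation buys is brevity, at the cost of importing continuous-time machinery (one must identify the discrete signature with Chen's iterated-integral signature of the piecewise-linear interpolation, cf.~Remark \ref{rem:classical}, and use time-augmentation for irreducibility). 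Your Stone--Weierstrass argument for item \ref{itm:dense} is essentially the standard proof underlying the Fliess citation and parallels the paper's own Proposition \ref{prop: central proposition}. Two small corrections there: the coproduct for which $\Sigo(x)$ is group-like is the unshuffle coproduct $\Delta v=1\otimes v+v\otimes 1$ of Theorem \ref{thm: properties of signature and normalzied signature}, not the deconcatenation coproduct (under deconcatenation the group-like elements are geometric series, not concatenation-exponentials); and in infinite dimensions the claim that shuffles of elements of $\bigoplus_{m\geq 0}(\Ban^{\otimes m})^\star$ remain bounded functionals uses that the admissible tensor norm is a reasonable crossnorm---an implicit step the paper also takes, and one that is vacuous in the finite-dimensional setting of interest.
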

\begin{proof}
  Item~\ref{itm:inj} is a special case of  \cite[Theorem 1]{Chen58}.
  Item~\ref{itm:dense} is due to Fliess \cite[Corollary 4.9]{fliess76}.
  Item~\ref{itm:char} follows since if $ \mu, \nu \in \Meas{\idx \to \tops} $ are such that $ \esigo(\mu) = \esigo(\nu) $, then $ \langle \esigo(\mu), \ell \rangle = \langle \esigo(\nu), \ell \rangle $ for any $ \ell \in \bigoplus_{m\geq 0} (\Ban^{\otimes m})^\star $ so by Item~\ref{itm:dense} it holds that $ \mu(f) = \nu(f) $ for any $ f \in \mathrm C(\idx\to\tops,\R) $, hence $ \mu=\nu $.
\end{proof}

\begin{remark}\label{rem:classical}
  Everything in this section is classical: our discrete signature coincides with Chen's \cite{Chen54} iterated integral signature, that is $\Sig{x}_m = \int dx^L_{t_1} \otimes \cdots \otimes dx^L_{t_m}$ where $x:[0,T] \rightarrow \Ban$ denotes the path given by linear interpolation of $\{(t,x(t)): t \in \idx\}$. 
  Usually, signatures are defined without the time--coordinate and only capture the path up to re-parametrization, but the adapted topologies depend on the parametrization so it is natural to include the time--coordinate.
  Nevertheless, the results in the following sections can be easily adapted without the additional time-coordinate and it might be interesting to study the resulting adapted topology for equivalence classes of un-parametrized paths; see \cite{Oberhauser18} for a discussion for the case of weak convergence, $r=0$.
  See also Appendix~\ref{app: signature, feature map and MMD} for more on signatures.
\end{remark}

\subsection{Paths and Signatures of Higher Rank}\label{sec:higher rank signatures}
Section~\ref{sec:signatures} recalled that the [expected] signature can characterize [measures on] paths.
Our goal is to characterize the predictions processes introduced in Section~\ref{sec: adapted topology}.
Simply applying the expected signature to a prediction process would ignore the nested structure of the state spaces of these processes, see Definition~\ref{def: higher rank prediction process}, and we heavily use this structure in the proof of our main result, Section~\ref{sec: top and sig}.
To address this we first introduce higher rank paths which formalize paths evolving in spaces of paths and then use this to introduce higher rank [expected] signatures.

\begin{definition}
Let $(\idx_r)_{r\geq1}$ be a sequence of finite ordered sets and $\tops$ a topological space. Let $\tops_0:=\tops$ and define $(\tops_r)_{r \ge 0}$\label{topsr} inductively, 
\[\tops_r:= \big(\idx_r \to \tops_{r-1}\big) \]
We refer to an element of $\tops_r$ as a path of rank $r$ in the state space $\tops$
\end{definition}
Explicitly, these spaces can be unravelled as 
\begin{align}\label{eq:higher rank paths}
	\tops_r=\idx_r \rightarrow \tops_{r-1}=(\idx_r \rightarrow \underbrace{(\idx_{r-1} \rightarrow \cdots \underbrace{(\idx_{2} \rightarrow \underbrace{(\idx_1\rightarrow \tops)}_{\tops_1})}_{\tops_2}\cdots)}_{\tops_{r-1}}).
\end{align}
A rank $1$ path coincides with the usual definition of a path from $\idx_1$ into $\tops$.
Evaluating a rank $r$ path at time $t_r \in \idx_r$ yields a rank $r-1$ path in the same state space, that is for $x \in \tops_r$, $x(t_r) \in \tops_{r-1}$ for every $t_r \in \idx_r$.

Recall that the Signature from Definition~\ref{def:sig1} injects any path evolving in a Banach space $\vs$ into the Banach space $\Ta{1}{V}$.
By iterating compositions of this map and defining $ \Ta{r+1}{\vs} $ \label{rtensor} inductively as the completion of $ \bigoplus_{n\geq 0} \Ta{r}{\vs}^{\otimes n} $ with respect to a suitable norm (see Appendix \ref{app:tensor algebras} for details) we get the following.
\begin{definition}\label{def: higher rank signature}
Let $ \Ban $ be a Banach space. Define the family of maps $(\sigr{r})_{r\ge 1}$\label{sigr}, 
\begin{align}
\sigr{r}: \vs_r
&\rightarrow \Ta{r}{\vs}
\end{align}
inductively by setting $\sigr{1}\coloneqq \Sigo$ and for $r \geq 2$,
\begin{align}
  \sigr{r}(x) := \Sig{x^\star \sigr{r-1}}, 
\end{align}
where $x^\star \sigr{r-1}$ denotes the pullback\footnote{that is $(x^\star \sigr{r-1})(t):= \sig{r-1}{x(t)}$ using that $x \in \vs_r$ and $x(t) \in \vs_{r-1}$.} of $\sigr{r-1}$ by $x$. We call $\sigr{r}$ the signature map of rank $r$. 
\end{definition}
\begin{figure}
  \centering
	\begin{tikzpicture}
	\matrix (m) [matrix of math nodes,row sep=4em,column sep=3em,minimum width=3em]
	{
		\, & \overbrace{\big(\idx_r \rightarrow \vs_{r-1}\big)}^{\vs_r} & \, \\
		\idx_r \rightarrow \Ta{r-1}{\vs} & \, & \Ta{r}{\vs} \\};
	\path[-stealth]
	(m-1-2) edge node [above] {$\sigr{r-1}\,\,\,\,$} (m-2-1)
	(m-2-1) edge node [below] {$\mathrm{S}$} (m-2-3)
	(m-1-2) edge [dashed] node [above] {$\sigr{r}$} (m-2-3);
	\end{tikzpicture}
	\caption{The inductive definition of $ \sigr{r} $. By extending the map $ \sigr{r-1} $ to a map $ \vs_r \rightarrow (\idx_r \rightarrow \Ta{r-1}{\vs}) $, the signature $ \mathrm{S} $ can be applied to it to form $ \sigr{r} : \vs_r \to \Ta{r}{\vs} $.}
\end{figure}
\begin{example}
  Schematically, we can think of rank $r$ signatures and  rank $r$ paths as 
  \begin{align}\label{higher rank paths}
    \vs_r= (\idx_r \rightarrow \underbrace{ (\idx_{r-1} \rightarrow \cdots \underbrace{ (\idx_{2} \rightarrow \underbrace{(\idx_1\rightarrow \vs)}_{\vs_1\hookrightarrow \Ta{1}{\vs}})}_{\vs_2\hookrightarrow \Ta{2}{\vs} }\cdots)}_{\vs_{r-1}\hookrightarrow \Ta{r-1}{\vs}}).
  \end{align}
  The above construction starts with applying the usual signature to the innermost bracket to turn the map $V_1 \to  I_1 $  into an element of $\Ta{1}{V}$ (the top curly bracket).  
  The next step turns the map $I_2 \to \Ta{1}{V}$ into an element of $\Ta{2}{V}$, etc.
  It is instructive to go through a couple of case for $r$. 
  \begin{enumerate}\item For $r=1$, we are given a (rank 1) path $x:\idx_1 \rightarrow \vs$, and $\sig{1}{x}$ is by definition the signature of $x$, $\sig{1}{x} \in \Ta{1}{\vs}$.
    That is, $\sigr{1}$ maps rank $1$ paths in the state space $\vs$ to elements of $\Ta{1}{\vs}$.

\item For $r=2$, we are given a rank $2$ path $x$ in the state space $\vs$, $x:\idx_2 \rightarrow (\idx_1 \rightarrow \vs)$. The evaluation of $x$ at any $t_2 \in \idx_2$ yields a rank $1$ path in the state space $\vs$
\begin{align}
	x(t_2): \idx_1 \rightarrow \vs, \quad t_1 \mapsto x(t_2)(t_1).  
\end{align}
Since $\sigr{1}$ maps a rank $1$ path in the state space $V$ to an element of $\Ta{1}{\vs}$, the pullback of $\sigr{1}$ by $x^\star$ equals 
\begin{align}\label{eq: r=2 pullback}
	x^\star \sigr{1}: \idx_2 \rightarrow \Ta{1}{\vs},\quad t_2 \mapsto \sigr{1} (x(t_2)). 
\end{align}
By definition, $\sig{2}{x}$ is the signature of this rank 1 path, $x^\star \sigr{1}$, that evolves in the state space $\Ta{1}{\vs}$, 
\begin{align}
	\sig{2}{x}= \Sig{x^\star \sigr{1}}, 
\end{align}
and therefore $\sig{2}{x} \in \Ta{2}{\vs} \subseteq \prod_{m\geq 0} (\R \oplus\Ta{1}{\vs})^{\otimes m}$.
That is, $\sigr{2}$ maps rank $2$ paths in the state space $\vs$ to elements of $\Ta{1}{\vs}$.
\end{enumerate}
\end{example}
\begin{proposition} \label{prop: higher injective}
  The map $\sigr{r}: \vs_r \rightarrow \Ta{r}{\vs}$ is injective.
\end{proposition}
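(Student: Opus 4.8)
The plan is to argue by induction on the rank $r$, exploiting the recursive definition $\sigr{r}(x) = \Sig{x^\star \sigr{r-1}}$ and the fact that injectivity has already been established for the rank $1$ signature in Theorem~\ref{cor:injective}. The overall strategy is to ``peel off'' one layer of the signature at a time: the outermost $\Sigo$ lets us pass from $\sigr{r}(x)=\sigr{r}(y)$ to equality of the pulled-back paths $x^\star\sigr{r-1}=y^\star\sigr{r-1}$, after which the inductive hypothesis finishes the job pointwise.

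First I would dispose of the base case $r=1$: here the claim is exactly item~\ref{itm:inj} of Theorem~\ref{cor:injective}, which asserts that $\Sigo = \sigr{1}\colon (\idx \to \Ban) \to \Ta{1}{\Ban}$ is injective for any Banach space $\Ban$. For the inductive step I would assume that $\sigr{r-1}$ is injective and consider $x,y \in \vs_r$ with $\sigr{r}(x)=\sigr{r}(y)$, that is $\Sig{x^\star\sigr{r-1}}=\Sig{y^\star\sigr{r-1}}$. The key point is that the pullbacks $x^\star\sigr{r-1}$ and $y^\star\sigr{r-1}$ are genuine rank $1$ paths $\idx_r \to \Ta{r-1}{\vs}$ valued in the Banach space $\Ta{r-1}{\vs}$. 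Applying the injectivity of $\Sigo$ from Theorem~\ref{cor:injective} with $\Ban = \Ta{r-1}{\vs}$ then yields $x^\star\sigr{r-1}=y^\star\sigr{r-1}$ as paths. Evaluating at each $t \in \idx_r$ this reads $\sigr{r-1}(x(t)) = \sigr{r-1}(y(t))$, so the inductive hypothesis gives $x(t)=y(t)$ for every $t \in \idx_r$, whence $x=y$ and the induction closes.

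The hard part is not the logical skeleton, which is short, but making sure that the rank $1$ injectivity result is actually being invoked legitimately at each stage. Theorem~\ref{cor:injective}\ref{itm:inj} is a statement about paths evolving in an arbitrary Banach space $\Ban$, and I would need to confirm that $\Ta{r-1}{\vs}$, built inductively as the completion of $\bigoplus_{n\geq 0} \Ta{r-2}{\vs}^{\otimes n}$ with respect to the chosen norm (Appendix~\ref{app:tensor algebras}), is indeed a Banach space, so that the rank $1$ theory applies to it verbatim. Once this is granted, the injectivity of $\Sigo$ transfers immediately to paths valued in $\Ta{r-1}{\vs}$, and the two-step reduction requires no further analytic input.
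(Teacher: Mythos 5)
Your proposal is correct and is exactly the argument the paper intends: the paper's entire proof reads ``Follows by iterating Theorem~\ref{cor:injective},'' and your induction --- peeling off the outer $\Sigo$ via its injectivity for paths valued in the Banach space $\Ta{r-1}{\vs}$, then applying the inductive hypothesis pointwise in $t \in \idx_r$ --- is precisely that iteration spelled out. The point you flag as needing care, that $\Ta{r-1}{\vs}$ is genuinely a Banach space so that Theorem~\ref{cor:injective} applies with $\Ban = \Ta{r-1}{\vs}$, is settled by the paper's construction in Definition~\ref{def:norm}, which defines $\Ta{r}{\Ban}$ as a completion under the norm $\lVert \cdot \rVert_r$.
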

\begin{proof}
	Follows by iterating Theorem \ref{cor:injective}.
\end{proof}

\subsection{Measures and Expected Signatures of Higher Rank}\label{sec: higher rank expected signature}
Our goal is to inject the laws of predictions processes into a normed space.
Recall that the laws of prediction processes have a rich nested structure, for example
\begin{align*}
  \Law{\hat \bX^0} &\in \Meas{ \idx \rightarrow \vs}\eqqcolon \cM_1,\\
\Law{\hat \bX^1}&\in \Meas{ I \rightarrow \Meas{\idx \rightarrow \vs}} \eqqcolon \cM_2, \\
\Law{\hat \bX^3} &\in \Meas{ I \rightarrow \Meas{ \idx \rightarrow \Meas{\idx \rightarrow \vs}}} \eqqcolon \cM_3.  
\end{align*}
Capturing this nested structure is essential when discussing the adapted topologies.  

\begin{definition}
Let $\idx_1,\ldots,\idx_r$ be finite ordered sets and $\tops$ a topological space.
For $r=1$ define $\cM_{0}\coloneqq \pr_0 \coloneqq \tops$ and for $ r \ge 1  $
\begin{align}
  \cM_{r}(\tops)&:= \Meas{\idx_r \to \cM_{r-1}(\tops)},\\
  \cP_{r}(\tops)&:= \prob{\idx_r \to \cP_{r-1}(\tops)}
\end{align}
We endow $\cM_{r}(\tops)$ and $\cP_{r}(\tops)$ with the natural weak topology. 
We refer to an element of $\cM_{r}(\tops)$\label{cmr} as a rank $r$ measure on $\tops$ and an element of $\cP_r(\tops)$\label{prr} as a rank $r$ probability measure on $\tops$. 
\end{definition}
Clearly, $\cP_r(U) \subset \cM_r(U)$ and these spaces can be written explicitly as 
\begin{align}\label{eq:higher rank measure}
\cM_{r}(\tops)&=\Meas{\idx_r \rightarrow \Meas{\idx_{r-1} \rightarrow \cdots \Meas{\idx_{2} \rightarrow \Meas{\idx_1\rightarrow \tops}}\cdots} },\\
\cP_r(\tops) &=  \prob{ I_r \to \prob{ I_{r-1} \to \cdots \prob{I_2 \to \prob{I_1 \to V}}\cdots } }.
\end{align}
As mentioned in Section~\ref{sec:recall duality}, although we are interested in the convex set of probability measures $\cP_r$, working with the larger linear space of Borel measures $\cM_r$ allows us to use duality arguments.
We emphasize that $\cM_{r}(\tops)$ is significantly bigger than $\Meas{\tops_{r}}$: the latter embeds into the former by taking the $r-1$ innermost measures in the parenthesis in~\eqref{eq:higher rank measure} to be Dirac measures.  

Analogous to how we iterated signature maps and tensor algebras in the previous section, we now construct expected signatures to provide an injection $\cM_{r}(\vs) \hookrightarrow \Ta{r}{\vs}$.
\begin{definition}
  Let $\vs$ be a Banach space.
  Define the family of maps $(\esig{r})_{r \ge 1}$\label{esigr} inductively by setting $\esig{0}:=\operatorname{id}_{\vs}$ and (whenever the integral is well--defined)
  \begin{align}
    \esig{r}: \cM_{r}(\Ban)  \rightarrow \Ta{r}{\vs}, \quad
    \mu \mapsto \int \Sig{x^\star\esig{r-1}} \mu(\mathrm{d}x),
  \end{align}
  where $x^\star\esig{r-1} $ denotes the pullback of $\esig{r-1}$ by $x$. 
  We call $\esig{r}$ the expected signature map of rank $r$. 
$  $\end{definition}
The following Proposition~\ref{prop:higher rank exp} generalizes that expected signature characterizes laws of processes (Theorem~\ref{cor:injective} item \ref{itm:char}).
We postpone its proof to Theorem~\ref{thm:main}.
\begin{proposition}\label{prop:higher rank exp}
  Let $ \vs $ be a separable Banach space and $ \cK \subset \vs $ compact.
  Then \[\esig{r}:\cP_r(\cK) \rightarrow \Ta{r}{\vs}\] is injective.
\end{proposition}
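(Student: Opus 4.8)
The plan is to argue by induction on $r$, using the inductive definition of $\esig{r}$ to strip off one rank at a time and reduce everything to the rank-$1$ injectivity already recorded in Theorem~\ref{cor:injective}(\ref{itm:char}). To make the induction self-sustaining I would prove the slightly stronger statement that, for compact $\cK$, the map $\esig{r}$ is a \emph{continuous injection} on $\cP_r(\cK)$ (hence a homeomorphism onto its image, since $\cP_r(\cK)$ turns out to be compact). The base case $r=0$ is trivial because $\esig{0}=\operatorname{id}_{\vs}$ on $\cP_0(\cK)=\cK$.

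The central device is a factorization of $\esig{r}$ through the rank-$1$ expected signature. Write $\Psi\colon x\mapsto x^\star\esig{r-1}$ for the coordinatewise application of $\esig{r-1}$, so that $\Psi(x)$ is a path indexed by $\idx_r$ with values in $\Ta{r-1}{\vs}$. The definition of $\esig{r}$ then reads
\begin{align}
  \esig{r}(\mu)=\int \Sig{x^\star\esig{r-1}}\,\mu(\mathrm{d}x)=\int \Sigo(y)\,(\Psi_\ast\mu)(\mathrm{d}y)=\esigo(\Psi_\ast\mu),
\end{align}
where I use the identification $\Ta{r}{\vs}=\Ta{1}{\Ta{r-1}{\vs}}$ built into the inductive construction of the rank-$r$ tensor algebra. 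Thus $\esig{r}=\esigo\circ\Psi_\ast$, and it suffices to show that each factor is injective.

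For the first factor I would carry continuity along with injectivity. Since each $\idx_r$ is finite and $\cK$ is compact, a routine induction shows that $\cP_{r-1}(\cK)$ is compact metrizable. Assuming inductively that $\esig{r-1}$ is a continuous injection on $\cP_{r-1}(\cK)$, it is automatically a homeomorphism onto its compact image $\cK':=\esig{r-1}(\cP_{r-1}(\cK))\subset\Ta{r-1}{\vs}$ (a continuous injection from a compact space into a Hausdorff space is a homeomorphism onto its image). Consequently the coordinatewise map $\Psi$ is a homeomorphism from $\idx_r\to\cP_{r-1}(\cK)$ onto $\idx_r\to\cK'$, and its pushforward $\Psi_\ast\colon\cP_r(\cK)\to\prob{\idx_r\to\cK'}$ is a bijection, in particular injective. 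For the second factor, $\cK'$ is a compact subset of the separable Banach space $\Ta{r-1}{\vs}$, so Theorem~\ref{cor:injective}(\ref{itm:char}), applied over this Banach space in place of $\vs$, gives that $\esigo\colon\prob{\idx_r\to\cK'}\to\Ta{1}{\Ta{r-1}{\vs}}=\Ta{r}{\vs}$ is injective. Composing the two injections yields injectivity of $\esig{r}$. To close the induction I would finally note that continuity propagates: with $\esig{r-1}$ continuous, $\Sigo\circ\Psi$ is continuous and bounded on the compact set $\idx_r\to\cK'$, so $\mu\mapsto\int(\Sigo\circ\Psi)\,\mathrm{d}\mu$ is weakly continuous, i.e.\ $\esig{r}$ is continuous.

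I expect the main obstacle to lie not in the algebra but in the continuity-and-compactness bookkeeping: one must genuinely verify that $\esig{r-1}$ is continuous so that its image $\cK'$ is compact and $\Psi$ is a homeomorphism, which is exactly what makes $\Psi_\ast$ injective and what licenses the compact-support hypothesis of Theorem~\ref{cor:injective}(\ref{itm:char}) on the Banach space $\Ta{r-1}{\vs}$. A secondary point worth spelling out is the identification $\Ta{r}{\vs}\cong\Ta{1}{\Ta{r-1}{\vs}}$ together with the boundedness of $\Sigo$ on compacts, since these are what guarantee that the integral defining $\esig{r}$ is well defined and weakly continuous in the first place.
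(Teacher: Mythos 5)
Your proof is correct, but it follows a different route from the paper: the paper never proves Proposition~\ref{prop:higher rank exp} directly at all. It postpones the proof and recovers the statement from the robust-signature machinery -- Proposition~\ref{prop: rank r exp sig injective} is proved by induction on $r$ using a tensor normalization $\Lambda$ together with the characteristicness result Proposition~\ref{prop: central proposition} (a Stone--Weierstrass/strict-topology duality argument that needs no compactness), and the classical compact case is then obtained from the remark in Theorem~\ref{thm:main} that on compact state spaces the robust signature may be replaced by the classical one. Your induction has the same skeleton as the paper's induction for the robust version -- factor $\esig{r}$ as the rank-$1$ expected signature applied to the coordinatewise pushforward under $\esig{r-1}$, then invoke a rank-$1$ injectivity result over the larger Banach space $\Ta{r-1}{\vs}$ -- but your rank-$1$ ingredient is the classical Theorem~\ref{cor:injective}(\ref{itm:char}), and what licenses it is exactly the compactness bookkeeping you carry along: $\cP_{r-1}(\cK)$ is compact metrizable, the inductively continuous injection $\esig{r-1}$ is therefore a homeomorphism onto its compact image $\cK'$, so $\Psi_\ast$ is bijective and $\esigo$ is injective on $\Meas{\idx_r \to \cK'}$. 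What your argument buys is a self-contained, elementary proof of the proposition as stated, with no normalization and no strict-topology duality; what the paper's route buys is the stronger statement on general (non-compact) separable Banach state spaces, which is what Theorems~\ref{thm:2} and~\ref{thm:main} actually consume later. If you write this up, keep explicit the two points you flag at the end: the identification $\Ta{r}{\vs}\cong\Ta{1}{\Ta{r-1}{\vs}}$ (which the paper's Definition~\ref{def: higher rank signature} itself tacitly uses), and the fact that on a compact metric space weak convergence of measures implies convergence of Bochner integrals of bounded continuous Banach-valued integrands (via Skorokhod representation, as the paper does, or a partition-of-unity approximation), since that is what makes $\esig{r}$ weakly continuous and closes your induction.
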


\begin{example}
  It is instructive to run through the first few iterations of $r$ for $\esig{r}$.
  Since one can always assume that the process $X = (X_t)_{t \in I}$ is the canonical coordinate process defined on the probability space $\big((I \to \cM_{r-1}(\vs)), \mu\big)$, we may also write $\ESig{r}{\mu} = \EE_{\mu}[S \circ \ESig{r-1}{X}]$.
  \begin{itemize}
  \item 
If $r = 1$, then for any probability measure $\mu \in \cP_1(\vs) \subset \cM_{1}(\vs) = \cM(I \to \vs)$, the mapping $\esig{1}(\mu) = \EE_{X \sim \mu}[S(X)]$ is the expected signature of the discrete--time stochastic process $X = (X_t)_{t \in I}$ with law $\mu$.
\item 
If $r = 2$, then for any probability measure $\mu \in \cM_{2}(\vs) = \cM\big(I \to \cM_{1}(\vs)\big)$, fix some stochastic process $X = (X_t)_{t \in I}$ with values in $\cM_{1}(\vs)$ and law $\mu$. For any $t \in I$, $X^\star \esig{1}(t) = \esig{1}(X(t))$ is the expected signature of $X(t)$; and hence $X^\star \esig{1}$ can be thought of as a stochastic process taking values in the vector space $\Ta{1}{\vs}$ and we may compute its expected signature. 

For a particular example of this,  if $Z = (Z_t)_{t \in I}$ is a discrete--time process taking values in $\vs$ defined on some stochastic basis $(\Omega,(\mathcal{F}_t),\PP)$, then $X_t := \PP[Z \in \cdot | \mathcal{F}_t]$ is a regular conditional distribution of $Z$ given $\mathcal{F}_t$. Let $\mu = \mathcal{L}(X)$ be the law of the measure-valued process $X$, then
\begin{align}
\esig{2}(\mu) = \EE[S(t \mapsto \EE[S(s \mapsto Z_s)|\mathcal{F}_t])].
\end{align}
We will give a complete description of $\esig{r}$ for this special case in Section~\ref{sec: top and sig}.
\end{itemize}
\end{example}

\subsection{Non-Compactness and Robust (Expected) Signatures}\label{sec:tensor normalization}
Even for random variables in $\R^d$, elementary examples show that the sequence of moments does not characterize the law when their support is non-compact; in particular, Proposition \ref{prop: weak convergence and moments for random variable} is not true without compact support.
The same applies to stochastic processes and their (higher rank) expected signatures.

The ``robust (Signature) Moments'' construction from~\cite{Oberhauser18} yields an extension of the injectivity of the higher rank expected signature from the previous sections to paths in general (non-compact) Banach spaces.
We emphasize that the results in Section~\ref{sec: top and sig} are already interesting for the case of compact state spaces that we have discussed in the previous sections and we invite readers less familiar with signatures to skip this section. 

\begin{proposition}\label{prop: rank r exp sig injective}
  \label{prop: higher norm injective}
  Let $ \Ban $ be separable Banach space.
  For every $r\ge 0$ there exist maps 
\begin{align}
  \sigrno{r} &: \vs_r \rightarrow \Ta{r}{\vs}\\
  \esigrno{r}&:\cM_r(\vs) \rightarrow \Ta{r}{\vs}
\end{align}
that are both bounded, continuous and injective. 
Further, the space $ \Ta{r}{\vs} $ is also a separable Banach space.
We refer to $\sigrno{r}$ as the robust signature map of rank $r$ and to $\esigrno{r}$ as the robust expected signature map of rank $r$.
\end{proposition}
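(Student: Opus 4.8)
The plan is to argue by induction on $r$, taking the rank one robust signature of \cite{Oberhauser18} as the base case and exploiting the recursive structure of $\sigr{r}$ and $\esig{r}$ for the inductive step. At rank $r=1$, the robust signature construction of \cite{Oberhauser18} supplies exactly such maps: a tensor normalization $\Lambda$ that is injective with image in a bounded set, so that $\sigrno{1}:=\Lambda\circ\Sigo$ is bounded, continuous and injective on $\vs_1=(\idx_1\to\vs)$, while the associated robust expected signature $\esigrno{1}(\mu)=\int\Lambda(\Sigo(x))\,\mu(\mathrm dx)$ is bounded, continuous for the weak topology, and characteristic, hence injective on $\cM_1(\vs)$; moreover $\Ta{1}{\vs}$ is a separable Banach space whenever $\vs$ is. (At $r=0$ one simply takes $\sigrno{0}=\esigrno{0}$ to be any bounded continuous injection of $\vs$ into itself, e.g.\ $v\mapsto v/(1+\|v\|)$, which is a homeomorphism onto the open unit ball.)

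For the inductive step, suppose $\sigrno{r-1}$ and $\esigrno{r-1}$ are bounded, continuous and injective and that $\Ta{r-1}{\vs}$ is a separable Banach space. Since $\Ta{r}{\vs}$ is the completion of $\bigoplus_{n\ge0}\Ta{r-1}{\vs}^{\otimes n}$ under an admissible tensor norm, it is again a separable Banach space, which in particular lets us invoke the rank one robust maps of \cite{Oberhauser18} with $\Ta{r-1}{\vs}$ as state space. I would then define
\[
\sigrno{r}(x):=\sigrno{1}\bigl(x^\star\sigrno{r-1}\bigr),\qquad
\esigrno{r}(\mu):=\esigrno{1}\bigl((\Psi_{r-1})_{*}\mu\bigr),
\]
where $\Psi_{r-1}(x)(t):=\esigrno{r-1}(x(t))$ carries a path in $\cM_{r-1}(\vs)$ to a path in $\Ta{r-1}{\vs}$, and $(\Psi_{r-1})_{*}\mu$ denotes the pushforward. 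Boundedness is immediate: $\sigrno{r-1}$ has image in a fixed bounded set, so $x^\star\sigrno{r-1}$ ranges over paths into that set and $\sigrno{1}$ is bounded; the same reasoning applies on the measure side. Continuity follows because $x\mapsto x^\star\sigrno{r-1}$ is continuous (a finite product of the continuous map $\sigrno{r-1}$), the pushforward $\mu\mapsto(\Psi_{r-1})_{*}\mu$ is weakly continuous, and $\sigrno{1},\esigrno{1}$ are continuous. This also establishes the separability claim $\Ta{r}{\vs}$ for free.

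Injectivity of $\sigrno{r}$ is a clean composition argument: $\sigrno{r}(x)=\sigrno{r}(y)$ forces $x^\star\sigrno{r-1}=y^\star\sigrno{r-1}$ by injectivity of $\sigrno{1}$, hence $\sigrno{r-1}(x(t))=\sigrno{r-1}(y(t))$ for every $t\in\idx_r$, and injectivity of $\sigrno{r-1}$ then gives $x=y$. The delicate point, which I expect to be the main obstacle, is injectivity of $\esigrno{r}$ at the level of measures. From $\esigrno{r}(\mu)=\esigrno{r}(\nu)$ and injectivity of $\esigrno{1}$ on $\cM_1(\Ta{r-1}{\vs})$ one gets $(\Psi_{r-1})_{*}\mu=(\Psi_{r-1})_{*}\nu$, and one must upgrade the pointwise injectivity of $\Psi_{r-1}$ to injectivity of the induced pushforward on measures. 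The clean route is to keep all state spaces standard Borel — here the inductively maintained separability of $\Ta{r-1}{\vs}$, and the corresponding standard Borel structure of $\cM_{r-1}(\vs)$ under its weak topology, are exactly what is needed — so that by the Lusin--Souslin theorem the injective Borel map $\Psi_{r-1}$ has Borel image and a Borel measurable inverse on that image; pushforward along such a Borel isomorphism onto its image is injective on finite signed Borel measures, yielding $\mu=\nu$. The genuine work lies in verifying that every space in the recursion remains standard Borel, in matching the weak topologies across consecutive ranks, and in passing from probability measures to the signed measures of $\cM_r(\vs)$; the boundedness and continuity bookkeeping above is routine by comparison.
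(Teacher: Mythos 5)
Your proposal is correct in substance and its construction of the maps is exactly the paper's: after a change of variables, $\esigrno{r}(\mu)=\esigrno{1}\bigl((\Psi_{r-1})_{*}\mu\bigr)$ is the paper's inductive definition $\esigrno{r}(\mu)=\EE_{Z\sim\mu}\bigl[\Lambda\Sigo\circ(t\mapsto\esigrno{r-1}(Z_t))\bigr]$, and your base case, boundedness, continuity and separability bookkeeping all match. Where you genuinely diverge is the one delicate step, injectivity of $\esigrno{r}$ at the level of measures. The paper never inverts the pushforward: it invokes its Proposition~\ref{prop: central proposition}, a Stone--Weierstrass argument in the strict topology, which says that for a metrizable state space $\cX$ and a continuous injection $\varphi:\cX\to\Ban$ (here $\varphi=\esigrno{r-1}$, using the induction hypothesis), the linear functionals of $\Lambda\Sigo\circ\varphi$ form a point-separating algebra, hence are dense in $C_b$ and characteristic, so equality of integrals forces equality of measures by duality. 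You instead reduce to the rank-$1$ injectivity of $\esigrno{1}$ on $\Meas{\idx\to\Ta{r-1}{\vs}}$ and then undo the pushforward via Lusin--Souslin. Both work, but they buy different things: the paper's duality route needs no descriptive set theory and actually yields universality/characteristicness (density of linear signature functionals), which is strictly stronger than injectivity and is reused elsewhere in the paper; your route uses the rank-$1$ result as a black box, which is a cleaner reduction, but it requires every iterated measure space in the recursion to be standard Borel. As you yourself note, that is unproblematic precisely on the probability hierarchies $\cP_r(\vs)$, which are Polish and are all that the paper's main results (e.g.\ Theorem~\ref{thm:2}) use; on the full signed-measure spaces $\cM_r(\vs)$ of the statement the weak topology is not metrizable and the Borel structure is unclear --- but the paper's own proof has the same restriction (its central proposition needs a metrizable state space, and the proof is written for $\pr_{r}(\vs)$), so you are at parity rather than behind. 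A small point in your favour: the paper's convention $\esigrno{0}=\mathrm{id}$ is not bounded, so your bounded homeomorphism at rank $0$ is more faithful to the claim as stated.
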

\begin{proof}The fact that every space $\Ta{r}{\vs}$ for $r \ge 1$ is a separable Banach space follows immediately from Definition \ref{def:norm}. Then we can show that on each $\Ta{r}{\vs}$ there exists a so--called tensor normalization map $\Lambda$ with codomain the unit ball of $\Ta{r}{\vs}$ such that the composition $\Lambda \Sigo := \Lambda \circ \Sigo$ preserves the algebraic properties of the signature map. We refer to \ref{app:normalization} for details on the construction of the normalization $\Lambda$.
 Let $\mu$ be an element of $\pr_{r}(\vs) = \pr(\idx \to \pr_{r-1}(\vs))$. Let $ Z^{r-1} $ be a stochastic process with values in $ \pr_{r-1}(\vs) $ and law $ \mu $, then we define for every $r \ge 1$, $\sigrno{r}(Z^{r-1}):= \Lambda \Sigo \circ (t \mapsto \esigrno{r-1}(Z^{r-1}_t))$ and
\begin{equation}\label{eq: equality for push forward measures}
\esigrno{r}(\mu) = \EE_{Z^{r-1}\sim\mu}[\sigrno{r}(Z^{r-1})]
\end{equation}
with the convention $ \esigrno{0}(Z^0) = Z^0$.
If $r = 1$, then this follows from Proposition \ref{prop: expected signature is injective} in Appendix  \ref{app:normalization}.
By the induction hypothesis, $\esigrno{r-1}$ is continuous and injective, hence the assertion about injectivity follows immediately from Proposition \ref{prop: central proposition}. 
Finally, the continuity of $\esigrno{r}$ follows from the Skorokhod representation theorem since if $\Ban$ is separable, then $\idx \to \vs$ is separable, and hence $\pr_{r-1}(\vs)$, is separable with respect to the weak topology.
\end{proof}
For ease of notation we are going to redefine the symbols $\sigr{r}$ and $\esigr{r}$ for the remainder of the article.
\begin{definition}
  Let $\Ban$ be a separable Banach space, $\tops \subset \Ban$, and $r \ge 0$.
  For the rest of this article we denote
  \begin{align}
    \sigr{r}: \tops_r &\to \Ta{r}{\Ban},\quad x \mapsto \sigrn{r}{x}, \\ 
    \esigr{r}:\cM_r(\tops) &   \to \Ta{r}{\Ban},\quad \mu \mapsto \esigrn{r}{\mu}.
  \end{align}
\end{definition}

\section{The Adapted Topology and Higher Rank Signatures} \label{sec: top and sig}
Our leitmotif is that expected signatures can be regarded as a generalization of the classical moment map.
Indeed, for $r=0$ we have by definition that $\bX=\hat \bX^0$ and the initial topology of the the map \[\APr \to \Ta{1}{\Ban},\quad \bX \to \ESig{1}{\operatorname{Law}(\hat \bX^0)}\] 
is the topology of weak convergence $(\APr,\tau_0)$, see~\cite{Oberhauser18}.
This suggests that, at least locally, the initial topology of the map \[ \APr \to \Ta{r}{\Ban},\quad \bX \to \ESig{r+1}{\operatorname{Law}(\hat \bX^r)}\] is the rank $r$ adapted topology $(\APr_r,\tau_r)$.
In this Section we show that this is indeed true in great generality.
\begin{definition}\label{distance}
  Let $\Ban$ be a separable Banach space and $\tops \subset V$.  
  For $r \ge 0 $ define 
  \begin{align}
    \Phi_r: \APr(\tops) \to \Ta{r+1}{V},\quad \bX \mapsto \ESig{r+1}{\Law{\hat \bX^r}} 
  \end{align}
  and
	\begin{align}
    d_r: \APr(\tops) \times \APr(\tops) \to [0,\infty), \quad (\bX, \bY) \mapsto \lVert \Phi_r(\bX) - \Phi_r(\bY) \rVert_{r+1}.
	\end{align}
\end{definition}
Our main result is 
\begin{theorem}\label{thm:mainA}
  Let $\Ban$ be a separable Banach space and $\tops \subset \Ban$ compact.  
  The following topologies on $\APr(\tops)$ are equal
\begin{enumerate}
\item the adapted topology of rank $r$, $\tau_r$, 
\item the extended weak topology of rank $r$, $\hat \tau_r$,
\item the initial topology of the map $\Phi_r: \APr(\tops) \to \Ta{r}{\Ban}$, 
\item the topology induced by convergence in the semi-metric $d_r$ on $\APr(\tops)$. 
\end{enumerate}
Moreover, the same statement holds locally if $\tops$ is not compact; see Theorem~\ref{thm:main}.  
\end{theorem}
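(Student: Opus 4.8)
The plan is to prove Theorem~\ref{thm:mainA} by establishing a chain of equivalences \ref{1}$\Rightarrow$\ref{2}$\Rightarrow$\ref{3}$\Leftrightarrow$\ref{4}$\Rightarrow$\ref{1}, where the numbering refers to the four topologies in the statement. The equivalence \ref{3}$\Leftrightarrow$\ref{4} is essentially immediate: by Definition~\ref{distance}, $d_r(\bX,\bY)=\lVert\Phi_r(\bX)-\Phi_r(\bY)\rVert_{r+1}$, so the topology induced by $d_r$ is precisely the initial topology of $\Phi_r$ with respect to the norm on $\Ta{r+1}{\Ban}$, a standard fact about (semi-)metrics arising from maps into a normed space. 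The genuinely substantive content is therefore the identification of the adapted topology $\tau_r$ and the extended weak topology $\hat\tau_r$ with the initial topology of $\Phi_r$.

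First I would prove \ref{1}$\Leftrightarrow$\ref{2}, i.e.\ that $\tau_r=\hat\tau_r$ on $\APr(\tops)$ when $\tops$ is compact. The natural route is to show that convergence $\bX_n\to\bX$ in $\tau_r$ (testing against all $f\in\cp_r$) is equivalent to weak convergence $\hat\bX^{r,n}\to\hat\bX^r$ of the rank $r$ prediction processes (testing against all $f\in C_b(\tops_r,\R)$). One direction should follow from an induction on $r$ showing that any adapted functional of rank $\le r$ can be realized as (the expectation of) a bounded continuous functional of the rank $r$ prediction process, using the recursive identity $\hat X^{r+1}_t=\Prob(\hat X^r\in\cdot\mid\cF_t)$ and the fact that conditional expectations $\EE[g(\hat X^{r-1})\mid\cF_t]$ become \emph{evaluations} of $\hat X^r_t$ against $g$. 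Conversely, one shows that bounded continuous functionals of $\hat\bX^r$ can be approximated by, or are generated by, rank $r$ adapted functionals; compactness of $\tops$ (and hence of the iterated spaces of measures, via Prohorov) is what makes the two separating classes coincide rather than merely nest.

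Next I would prove \ref{2}$\Leftrightarrow$\ref{3}. By definition $\hat\tau_r$ is the topology making $\bX\mapsto\Law{\hat\bX^r}$ continuous into $\cM_r$-type spaces under weak convergence, and $\Phi_r(\bX)=\ESig{r+1}{\Law{\hat\bX^r}}$. The key input is injectivity of the rank $r+1$ expected signature, Proposition~\ref{prop:higher rank exp}, together with the density/duality statement underlying Theorem~\ref{cor:injective}: linear signature functionals are dense in continuous functions on paths, and by iterating this through the nested measure spaces one obtains that the rank $r+1$ signature functionals are dense in $C_b$ of the relevant state space. This density, combined with the compactness-driven tightness, upgrades injectivity to a characterization of weak convergence, exactly as in Proposition~\ref{prop: weak convergence and moments for random variable}; hence convergence of $\Phi_r(\bX_n)$ in $\Ta{r+1}{\Ban}$ is equivalent to weak convergence of $\Law{\hat\bX^{r,n}}$, which is $\hat\tau_r$-convergence.

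The hard part will be the inductive step in \ref{2}$\Leftrightarrow$\ref{3}: propagating the ``expected signatures characterize weak convergence'' property up the rank hierarchy. At each level one must show that weak convergence of a measure-valued process is equivalent to convergence of the expected signature applied to the pushed-forward (lower-rank) expected signatures, and this requires tightness to survive the currying/pullback operation $x\mapsto x^\star\esig{r-1}$. Compactness of $\tops$ should guarantee tightness at every level through Prohorov, so that along any subsequence one extracts a weak limit and identifies it via the injective expected signature map; but the bookkeeping of which topology is carried on each nested space $\cM_r(\tops)$, and verifying that the iterated signature functionals genuinely separate measures on these spaces of measures, is where the real care is needed. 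The non-compact case referenced at the end then demands replacing the plain expected signature with the robust version of Proposition~\ref{prop: rank r exp sig injective}, whose continuity and injectivity (proved there via the Skorokhod representation theorem) let the same argument run locally.
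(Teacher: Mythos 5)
Your proposal is correct and follows essentially the same route as the paper: the paper proves $\tau_r\subseteq\hat\tau_r$ by realizing each $f\in\cp_r$ as a bounded continuous functional $\rho_f$ of $\hat\bX^r$ and gets the converse on compact state spaces via Stone--Weierstrass on the compact space $(\idx\to\cP_r(\cK))$ (Proposition~\ref{prop: extended r weak topology and adapted topology of rank r}), while the equivalence with the $\Phi_r$-initial topology and $d_r$ is obtained exactly as you outline, from boundedness/continuity of the robust expected signature in one direction and a Prohorov-style subsequence extraction identified through injectivity (Theorem~\ref{thm:2}, Proposition~\ref{prop: MMD and rank r extended weak topology}) in the other. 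The only ingredient you do not mention is the conditional-signature-process identity $\Phi_r(\bX)=\EE\bar\bX^{r+1}_0$ (Proposition~\ref{prop: concrete description of expected signature of higher rank}), which the paper uses as a convenience in these arguments but which does not change the structure of the proof.
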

Restricted to $r=1$ and processes with their natural filtration, the semi-metric $d_1$ adds another entry to the list of distances that induce the adapted topology $\tau_1$.
However, even for this $r=1$ case, the characterization of the adapted topology as the initial topology of a map into a normed, graded space rather than the topology induced by a (semi-)metric, is to the best of our knowledge new.
\begin{corollary}[\cite{Beiglbock19}] \label{corollary:In natural filtration case}
Let $\tops$ be as in Theorem~\ref{thm:mainA} and denote by $\APr_{\text{Natural}}(\tops) $ the subset of $\APr(\tops)$ of processes equipped with their natural filtration. 
Then the following topologies on $\APr_{\text{Natural}}(\tops) $ are equal 
		\begin{itemize}
      \item the topology induced by $d_1$,
			\item the topology induced by adapted Wasserstein distance,
			\item the topology induced by symmetrized-causal Wasserstein distance,
			\item Hellwig's information topology,
			\item Aldous' extended weak topology,
			\item the optimal stopping topology.
		\end{itemize}
\end{corollary}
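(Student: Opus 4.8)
The plan is to reduce everything to Theorem~\ref{thm:mainA} and then cite the known equivalences from \cite{Beiglbock19}. The corollary lists six topologies on $\APr_{\text{Natural}}(\tops)$ and asserts they coincide; the key observation is that five of them (adapted Wasserstein, symmetrized-causal Wasserstein, Hellwig's information topology, Aldous' extended weak topology, and the optimal stopping topology) are \emph{already} known to coincide on $\APr_{\text{Natural}}(\tops)$ — this is the main result of \cite{Beiglbock19}, which established that in discrete time and under the natural filtration all these classically-studied adapted topologies agree and equal what we call $\tau_1$. So the only genuine work here is to connect the list to $d_1$, and this is precisely what Theorem~\ref{thm:mainA} provides.

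Concretely, I would argue as follows. First, by Theorem~\ref{thm:mainA} applied with $r=1$, the topology on $\APr(\tops)$ induced by the semi-metric $d_1$ equals the adapted topology $\tau_1$. Restricting to the subset $\APr_{\text{Natural}}(\tops) \subset \APr(\tops)$, the subspace topology inherited from $(\APr(\tops),\tau_1)$ is by definition the adapted topology of rank $1$ on $\APr_{\text{Natural}}(\tops)$, and likewise the subspace topology inherited from $(\APr(\tops), d_1)$ is induced by the restriction of $d_1$. Hence the topology induced by $d_1$ on $\APr_{\text{Natural}}(\tops)$ coincides with $\tau_1$ restricted to $\APr_{\text{Natural}}(\tops)$.

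Second, I would invoke \cite{Beiglbock19} to identify $\tau_1$ restricted to the natural-filtration subspace with the remaining five topologies. The content of \cite{Beiglbock19} is exactly that, on $\APr_{\text{Natural}}(\tops)$, the adapted Wasserstein distance, the symmetrized-causal Wasserstein distance, Hellwig's information topology, Aldous' extended weak topology, and the optimal stopping topology all induce the same topology, namely the one we call $\tau_1$ (recall from the introduction that Aldous' extended weak topology \emph{is} our $\tau_1$, and that all the constructions cited in \cite{Beiglbock19} agree in discrete time under the natural filtration). Chaining these two identifications gives that all six topologies on $\APr_{\text{Natural}}(\tops)$ coincide, which is the assertion.

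The main obstacle is not really a mathematical difficulty but a matching of conventions: one must check that the semi-metric $d_1$ of Definition~\ref{distance}, built from the rank $2$ expected signature $\esig{2}$ applied to $\Law{\hat \bX^1}$, does induce \emph{exactly} $\tau_1$ and not some strictly finer or coarser topology when passed to the subspace. This is guaranteed by Theorem~\ref{thm:mainA}, which asserts equality of the four topologies (including the $d_1$-topology and $\tau_1$) on the full space $(\APr(\tops),\tau_1)$ for compact $\tops$; the restriction to a subspace of an equality of topologies is automatic, so no additional argument is needed there. The remaining care is purely bibliographic: verifying that the five classical topologies in the list are the same ones treated in \cite{Beiglbock19} and that their equivalence is stated for precisely the discrete-time, natural-filtration, compact-state-space setting considered here.
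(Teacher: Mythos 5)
Your proposal is correct and matches the paper's own argument: the paper likewise obtains the corollary by applying Theorem~\ref{thm:mainA} with $r=1$ (so that $d_1$ induces $\tau_1=\hat\tau_1$, i.e.\ Aldous' extended weak topology, on a compact state space) and then citing \cite{Beiglbock19} for the equivalence of the five classical topologies under the natural filtration. Your extra remark that the equality of topologies passes automatically to the subspace $\APr_{\text{Natural}}(\tops)$ is a detail the paper leaves implicit, but it is the same reduction.
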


The remainder of this Section is devoted to the proof of Theorem~\ref{thm:mainA}. 

\subsection{Higher Rank Conditional Signature Process.}
The domain of $\esig{r}$ is all of $\cM_{r}(\Ban)$.
When restricted to the laws of prediction processes, this additional structure yields an useful interpretation in terms of conditional expectations; e.g.~for $r=1$ and $t \in \idx$,
\begin{align}\label{eq: induction start}
\ESig{1}{\hat \bX^1_t} = \int \Sig{x} \PP[X \in dx| \mathcal{F}_t]= \EE[\Sig{X}| \cF_t].
\end{align}
This motivates the following definition
\begin{definition}
	Let $\bX=(\Omega,(\cF_t),\mathbb{P},X) \in \APr(\vs)$. 
	We define a family of adapted processes $(\bar \bX^r)_{r \ge 0 }$\label{barbxr} by $\bar \bX^{r}=(\Omega,\cF, \mathbb{P}, \bar X^r)$ with $\bar X^r$ given inductively as 
	\begin{align}
	\bar X^{r}_t:= \EE[ \Sig{\bar \bX^{r-1}}|\cF_t] 
	\end{align}
	and $\bar \bX^0_t= X_t$.
	We call $\bar \bX^r$ the rank $r$ conditional signature process of $\bX$.
\end{definition}
\begin{proposition}\label{prop: concrete description of expected signature of higher rank}
	For every $r \ge 1$ and $\bX \in \APr(\vs)$ it holds that 
	\begin{align}\label{eq: target equation}
	\ESig{r}{\hat \bX^r_t}= \bar \bX^{r}_t \quad \forall t \in \idx. 
	\end{align}
	In particular, \[\Phi_r(\bX) \equiv \ESig{r+1}{\Law{\hat \bX^{r}}}= \EE\bar \bX^{r+1}_0.\]
\end{proposition}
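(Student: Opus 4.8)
The plan is to prove the pointwise identity \eqref{eq: target equation} by induction on $r$, and then to read off the formula for $\Phi_r(\bX)$ as the special case $t=0$ combined with the identity $\hat X^r_0 = \Law{\hat X^{r-1}}$ recorded after Definition~\ref{def: higher rank prediction process}. The whole argument is a careful unwinding of the two parallel recursions: the one defining $\hat \bX^r$ and $\esig{r}$, and the one defining $\bar \bX^r$.

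For the base case $r=1$ there is nothing to do beyond \eqref{eq: induction start}: since $\hat \bX^1_t = \PP(X \in \cdot \mid \cF_t)$ and $\bar \bX^0 = X$, integrating the rank-$1$ signature against this regular conditional distribution gives $\ESig{1}{\hat \bX^1_t} = \EE[\Sig{X} \mid \cF_t] = \bar \bX^1_t$. For the inductive step I would assume $\ESig{r-1}{\hat \bX^{r-1}_s} = \bar \bX^{r-1}_s$ for all $s \in \idx$ and unfold the recursive definition of the rank-$r$ expected signature,
\[
\ESig{r}{\hat \bX^r_t} = \int \Sig{x^\star \esig{r-1}} \, \hat \bX^r_t(dx).
\]
The key step is to recognize $\hat \bX^r_t = \PP(\hat \bX^{r-1} \in \cdot \mid \cF_t)$ as a regular conditional distribution, so that for any bounded measurable functional $g$ on $(\idx \to \cM_{r-1})$ one has $\int g(x)\,\hat \bX^r_t(dx) = \EE[g(\hat \bX^{r-1}) \mid \cF_t]$ almost surely. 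Applying this with $g(x) = \Sig{x^\star \esig{r-1}}$ (bounded and continuous, hence measurable, since in the compact case the rank signatures are bounded continuous) yields
\[
\ESig{r}{\hat \bX^r_t} = \EE\!\left[\Sig{(\hat \bX^{r-1})^\star \esig{r-1}} \,\middle|\, \cF_t\right].
\]
Evaluating the pullback path coordinate-wise, $(\hat \bX^{r-1})^\star \esig{r-1}(s) = \esig{r-1}(\hat \bX^{r-1}_s) = \bar \bX^{r-1}_s$ by the induction hypothesis, so the pullback path equals $\bar \bX^{r-1}$; substituting and invoking the definition $\bar X^r_t = \EE[\Sig{\bar \bX^{r-1}} \mid \cF_t]$ closes the induction.

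For the ``in particular'' claim I would integrate against the unconditional law directly. Since $\Law{\hat \bX^r} = \hat \bX^{r+1}_0$, the same pullback computation (now against the full law rather than a conditional one) gives
\[
\Phi_r(\bX) = \ESig{r+1}{\Law{\hat \bX^r}} = \EE\!\left[\Sig{(\hat \bX^r)^\star \esig{r}}\right] = \EE\!\left[\Sig{\bar \bX^r}\right],
\]
and the tower property together with the definition $\bar X^{r+1}_0 = \EE[\Sig{\bar \bX^r} \mid \cF_0]$ identifies this with $\EE \bar \bX^{r+1}_0$.

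The main obstacle is the bookkeeping of the regular-conditional-distribution identity across ranks: one must justify that integrating the bounded continuous $\Ta{r-1}{\vs}$-valued functional $x \mapsto \Sig{x^\star \esig{r-1}}$ against $\PP(\hat \bX^{r-1} \in \cdot \mid \cF_t)$ genuinely reproduces the conditional expectation $\EE[\Sig{(\hat \bX^{r-1})^\star \esig{r-1}} \mid \cF_t]$. This requires the disintegration property for Banach-space-valued functionals and the measurability and integrability furnished by compactness of $\tops$; everything else is a mechanical matching of the two recursions.
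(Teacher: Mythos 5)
Your proposal is correct and follows essentially the same route as the paper: induction on $r$ with the base case \eqref{eq: induction start}, an inductive step that unfolds the definition of $\esig{r}$ and uses the regular-conditional-distribution property of $\hat \bX^r_t$ to convert the integral into a conditional expectation of $\Sig{\bar \bX^{r-1}}$, and then the ``in particular'' statement via the tower property. The only cosmetic difference is the direction of the final computation (you unfold $\Phi_r(\bX)$ and apply the pointwise identity, while the paper takes expectations of $\bar \bX^{r}_t$), which is the same one-line argument read in reverse.
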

\begin{proof}
	The second claim follows immediately from~\eqref{eq: target equation} since
	\begin{align}
	\EE\bar \bX^{r}_t = \EE \ESig{r}{\hat \bX^r_t} = \EE \int \ESig{r}{x} \PP[\hat \bX^{r-1} \in dx| \mathcal{F}_t] = \int \ESig{r}{x} \PP[\hat \bX^{r-1} \in dx] = \ESig{r}{\Law{\hat \bX^{r-1}}}.
	\end{align}
	For the proof of~\eqref{eq: target equation} we proceed by induction over $r \ge 1$.
	The starting case, $r=1$, is given in~\eqref{eq: induction start}.
	For the induction step, assume that~\eqref{eq: target equation} holds true for some $r \ge 1$. 
	We denote by $\mu_r$ the measure
	\begin{align}
	\mu_r = \Prob(\hat X^{r} \in \cdot | \mathcal{F}_t).
	\end{align}
	By definition of $\esig{r+1}$ we see that 
	\begin{align}
	\ESig{r+1}{\hat \bX^{r+1}_t}  = \int \Sig{x^{\star} \esig{r}} \mu_r(dx)
	= \EE[ \Sig{ s \mapsto \ESig{r}{\hat X^r_s}}|\cF_t] = \EE[ \Sig{ s \mapsto \bar \bX^r_s}|\cF_t]
	\end{align}
	where we used the induction hypothesis, $\ESig{r}{\hat X^r_s} = \bX^r_s$ in the last step.
\end{proof}
This interpretation of $\Phi_r(\bX)$ in terms of the rank $r$ conditional signature process $\bar \bX^{r+1}$ turns out to be very useful in the next section, in particular for the proof of Theorem \ref{thm:2}.  

\subsection{Embedding and Metrizing Adapted Topologies}\label{subsect: embedding and metrizing adapted topologies}
\begin{theorem}\label{thm:2}
  Let $\Ban$ be a a separable Banach space and $ \bX, \bY \in \APr(\vs) $.
  For every $r\ge 0$ the following are equivalent
	\begin{enumerate}
  \item \label{itm: equal mod r} $\EE[f(\bX)] = \EE[f(\bY)] \quad \forall f \in \cp_r$, 
		\item\label{itm: equal pred proc2} $\Law{\hat \bX^r} = \Law{\hat \bY^r}$,
		\item\label{itm: equal pred proc3} $\Law{\hat \bX^0, \ldots, \hat \bX^r} = \Law{\hat \bY^0,\ldots,\hat \bY^r}$. 
  \end{enumerate}
  \begin{enumerate}
    \setcounter{enumi}{3}
		\item\label{itm: equal exp sig 4} $\Phi_r(\bX) = \Phi_r(\bY)$. 
	\end{enumerate}
\end{theorem}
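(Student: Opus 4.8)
The plan is to split the four-way equivalence into three sub-equivalences, \Iff{itm: equal pred proc2}{itm: equal exp sig 4}, \Iff{itm: equal pred proc2}{itm: equal pred proc3}, and \Iff{itm: equal mod r}{itm: equal pred proc3}; by transitivity these suffice. The first is the only genuinely signature-theoretic step, while the other two encode the Hoover--Keisler structure of the prediction tower and do not use signatures at all.

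For \Iff{itm: equal pred proc2}{itm: equal exp sig 4} I would argue directly from injectivity of the expected signature. By Definition~\ref{distance} one has $\Phi_r(\bX)=\esig{r+1}(\Law{\hat \bX^r})$, and $\Law{\hat \bX^r}$ is a rank $r+1$ probability measure, i.e.\ an element of $\cP_{r+1}(\tops)$. Since the robust expected signature $\esig{r+1}$ is injective on rank $r+1$ probability measures---Proposition~\ref{prop: rank r exp sig injective} in general and Proposition~\ref{prop:higher rank exp} in the compact case---the identity $\Phi_r(\bX)=\Phi_r(\bY)$ holds if and only if $\Law{\hat \bX^r}=\Law{\hat \bY^r}$. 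This step needs neither compactness nor induction.

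For \Iff{itm: equal pred proc2}{itm: equal pred proc3}, the direction \Implies{itm: equal pred proc3}{itm: equal pred proc2} is a marginalisation. The converse \Implies{itm: equal pred proc2}{itm: equal pred proc3} rests on the observation that each lower-rank prediction process is a fixed, process-independent measurable function of the top one: because $\hat X^{k}$ is adapted, the time-$s$ coordinate of its path is $\cF_s$-measurable, so under the regular conditional law $\hat X^{k+1}_t=\PP(\hat X^{k}\in\cdot\,|\,\cF_t)$ the time-$t$ marginal is the Dirac mass at $\hat X^{k}_t$; reading off this atom recovers $\hat X^{k}_t$, hence the whole path $\hat X^{k}$, from $\hat X^{k+1}$. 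Iterating from $k=r-1$ down to $0$ produces one measurable map $\Psi$ with $(\hat X^0,\dots,\hat X^r)=\Psi(\hat X^r)$, valid for both $\bX$ and $\bY$; pushing $\Law{\hat \bX^r}=\Law{\hat \bY^r}$ forward through $\Psi$ then gives \ref{itm: equal pred proc3}.

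Finally \Iff{itm: equal mod r}{itm: equal pred proc3} is the crux, and I would prove it by induction on $r$. For \Implies{itm: equal pred proc3}{itm: equal mod r} I would show that every $f\in\cp_r$ satisfies $f(\bX)=G_f(\hat X^0,\dots,\hat X^r)$ almost surely for a bounded measurable $G_f$ independent of the process: rank $0$ functionals see only $\hat X^0=X$; composition with a $C_b$ map combines the $G$'s; and a conditional expectation $\EE[h(\bX)\,|\,\cF_t]$ with $\rank{h}\le r-1$ equals $\int G_h\,\mathrm d\hat X^r_t$, since $\hat X^r_t$ is the regular conditional law of $\hat X^{r-1}$ and, by the recovery map above, $G_h$ may be written as a function of $\hat X^{r-1}$ alone. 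Equality of the joint laws \ref{itm: equal pred proc3} then forces $\EE[f(\bX)]=\EE[f(\bY)]$, as both equal the integral of the common $G_f$ against that law. The main obstacle is the converse \Implies{itm: equal mod r}{itm: equal pred proc3}: I must show that $\{G_f:f\in\cp_r\}$ is measure-determining for $\Law{\hat X^r}$. For this I would check that this family is an algebra (closed under products and $C_b$-compositions) which, for each time $t$ and each $g\in C_b$, contains the conditional evaluation $\EE[g(\hat X^{r-1})\,|\,\cF_t]=\int g\,\mathrm d\hat X^r_t$; combining the density of linear signature functionals (Theorem~\ref{cor:injective} item~\ref{itm:dense}) with Stone--Weierstrass on the compact prediction state spaces $\tops_k$, this algebra is dense in the continuous functions on the path space of $\hat X^r$ and hence separates laws. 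The delicate points are precisely this density/point-separation step and the bookkeeping that each nested conditional expectation raises the rank by exactly one; the non-compact case is then recovered ``locally'' as in Theorem~\ref{thm:mainA} after replacing the expected signature by its robust normalisation.
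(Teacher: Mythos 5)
Your decomposition is sound, and three of the four bridges essentially coincide with the paper's proof: the equivalence of \ref{itm: equal pred proc2} and \ref{itm: equal exp sig 4} via injectivity of $\esig{r+1}$ on rank $r+1$ probability measures (Proposition~\ref{prop: rank r exp sig injective}), the recovery of $(\hat \bX^0,\dots,\hat \bX^{r-1})$ from $\hat \bX^r$ (your per-time-coordinate Dirac argument is a harmless variant of the paper's observation that $\hat \bX^r_T=\delta_{\hat \bX^{r-1}}$, which reads off the whole lower-rank path at the terminal time), and the representation $f(\bX)=G_f(\hat X^0,\dots,\hat X^r)$ with a process-independent bounded measurable $G_f$, which is exactly how the paper proves \Implies{itm: equal pred proc3}{itm: equal mod r}.

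The genuine gap is in your crux direction \Implies{itm: equal mod r}{itm: equal pred proc3}. The theorem is stated for an arbitrary separable Banach space $\Ban$, with no compactness hypothesis, and the paper proves it in that generality. Your argument that $\{G_f : f\in\cp_r\}$ determines $\Law{\hat \bX^r}$ rests on Stone--Weierstrass ``on the compact prediction state spaces'', i.e.\ uniform-norm density of a point-separating subalgebra of $C_b$; this is precisely the step that breaks without compactness (on a non-compact Polish space the uniform closure of a point-separating subalgebra of $C_b$ is in general a proper subspace: the closure of the algebra generated by $\arctan$ on $\R$ contains only functions admitting limits at $\pm\infty$). Your proposed repair --- handling the non-compact case ``locally as in Theorem~\ref{thm:mainA} after replacing the expected signature by its robust normalisation'' --- addresses the wrong issue: localization and robust signatures in the paper concern the \emph{topological} statements (metrization, and injectivity of $\esig{r+1}$, i.e.\ the bridge \Iff{itm: equal pred proc2}{itm: equal exp sig 4}), whereas the four-way equivalence of laws in Theorem~\ref{thm:2} is an exact, global statement that must be established for non-compact $\Ban$ as such. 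The paper closes this hole with Lemma~\ref{lem: approx indicator}: for every Borel set $B$ in the path space of $\hat \bX^r$ there is a single uniformly bounded sequence $f_k\in\cp_r$, built canonically (hence valid simultaneously for $\bX$ and $\bY$), with $1_B\circ\hat \bX^r=\lim_k f_k(\bX)$ in probability; the construction is by induction on $r$ using Urysohn's lemma and Dynkin's $\pi$--$\lambda$ theorem and needs no compactness, after which \ref{itm: equal mod r} and dominated convergence give $\Law{\hat \bX^r}=\Law{\hat \bY^r}$ directly. Your Stone--Weierstrass route does work verbatim when the state space is compact (it is essentially the paper's argument for Proposition~\ref{prop: extended r weak topology and adapted topology of rank r}), and it could in principle be rescued on Polish spaces by a strict-topology version of Stone--Weierstrass, but as written the non-compact case is not covered; moreover the appeal to density of linear signature functionals (Theorem~\ref{cor:injective}) in this step is a red herring, since signature functionals are not adapted functionals and play no role in showing that $\cp_r$ determines the law.
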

We prepare the proof of Theorem~\ref{thm:2} with a Lemma. 
\begin{lemma}\label{lem: approx indicator}
	For every $r \ge 0$ and every Borel set $B \subseteq (\idx \rightarrow \cM_r)$, there exists a sequence of uniformly bounded adapted functionals $f_k \in \cp_r$ such that $1_B \circ \hat \bX^r = \lim_{k \rightarrow \infty}f_k(\bX)$ in probability.
\end{lemma}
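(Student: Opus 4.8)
The plan is to prove the lemma by induction on $r$, in the slightly stronger but equivalent form: for every bounded Borel $\phi\colon(\idx\to\cM_r)\to\R$, the random variable $\phi(\hat\bX^r)$ is a limit in probability of a uniformly bounded sequence in $\cp_r$. This reformulation contains the stated lemma (take $\phi=1_B$), and conversely follows from it because $\cp_r$ is closed under composition with continuous functions of several arguments, so finite linear combinations $\sum_j c_j 1_{B_j}(\hat\bX^r)$ are captured, and every bounded Borel $\phi$ is a uniform limit of such simple functions.

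For the base case $r=0$ we have $\hat\bX^0=X$ with values in the finite product $(\idx\to\tops)\cong\tops^{|\idx|}$. Continuous bounded functions are dense in $L^1(\Law{X})$, so any bounded Borel $\phi$ admits $g_k\in C_b(\tops^{|\idx|},\R)$, uniformly bounded after clipping, with $g_k(X)\to\phi(X)$ in $L^1$ and hence in probability; since $g_k(X)=g_k(X_0,\dots,X_T)\in\cp_0$, this settles $r=0$.

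For the inductive step $r\to r+1$, the engine is the identity $\int\psi\,d\hat X^{r+1}_t=\EE[\psi(\hat\bX^r)\mid\cF_t]$, valid for bounded measurable $\psi$ on $(\idx\to\cM_r)$ because $\hat X^{r+1}_t=\Prob(\hat\bX^r\in\cdot\mid\cF_t)$ is a regular conditional law. Let $\mathcal{H}$ denote the set of bounded Borel $\phi$ on $(\idx\to\cM_{r+1})$ for which $\phi(\hat\bX^{r+1})$ is a limit in probability of a uniformly bounded sequence in $\cp_{r+1}$; I would show $\mathcal{H}$ is all bounded Borel functions by the functional monotone class theorem. The set $\mathcal{H}$ is a vector space containing constants and is closed under uniformly bounded pointwise limits (bounded convergence upgrades pointwise to in-probability convergence, and a diagonal argument absorbs the approximation). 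It then remains to place a generating multiplicative class inside $\mathcal{H}$: the bounded continuous cylinder functions $\mu\mapsto H\big((\langle\psi_i,\mu_{t_j}\rangle)_{i,j}\big)$, with $H$ continuous bounded and $\psi_i\in C_b((\idx\to\cM_r),\R)$. For these the identity above gives $\langle\psi_i,\hat X^{r+1}_{t_j}\rangle=\EE[\psi_i(\hat\bX^r)\mid\cF_{t_j}]$, and the induction hypothesis supplies uniformly bounded $f^{(i)}_k\in\cp_r$ with $f^{(i)}_k(\bX)\to\psi_i(\hat\bX^r)$ in probability; uniform boundedness upgrades this to $L^1$, and the $L^1$-contractivity of $\EE[\cdot\mid\cF_{t_j}]$ yields $\EE[f^{(i)}_k(\bX)\mid\cF_{t_j}]\to\EE[\psi_i(\hat\bX^r)\mid\cF_{t_j}]$ in probability, with each $\EE[f^{(i)}_k(\bX)\mid\cF_{t_j}]\in\cp_{r+1}$. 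Composing with the uniformly continuous $H$, which leaves the rank at $r+1$, preserves convergence in probability, so the cylinder function lies in $\mathcal{H}$. Since the evaluations $\mu\mapsto\langle\psi,\mu_{t_j}\rangle$ are measure-determining and generate the Borel $\sigma$-algebra of the weak topology, the monotone class theorem closes the induction.

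The main obstacle I anticipate is verifying that the cylinder class genuinely generates the Borel $\sigma$-algebra on the deeply nested space $(\idx\to\cM_{r+1})$ and that the whole scheme respects the rank grading; the conceptual heart is the identity $\langle\psi,\hat X^{r+1}_t\rangle=\EE[\psi(\hat\bX^r)\mid\cF_t]$, which converts one extra integration against the prediction measure into exactly one extra conditional expectation, matching the grading of $\cp$. When $\tops$ is compact one may replace the monotone-class step by Stone--Weierstrass, since $\cP_{r+1}(\cK)$ is then compact metrizable by iterated Prohorov and the cylinder functions form a point-separating subalgebra; the non-compact case leans instead on separability, so that the weak topology is metrizable and generated by countably many evaluations. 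The analytic interchanges --- bounded convergence, $L^1$-contractivity of conditional expectation, and stability of convergence in probability under composition with uniformly continuous maps --- are routine once uniform boundedness is maintained by clipping throughout.
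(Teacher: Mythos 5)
Your proof is correct and follows essentially the same route as the paper's: induction on the rank $r$, the identity $\langle \psi, \hat X^{r+1}_t\rangle = \EE[\psi(\hat \bX^{r})\,|\,\cF_t]$ to trade one integration against the prediction measure for one conditional expectation (hence one unit of rank), uniform boundedness plus $L^1$-contractivity of conditional expectation to pass limits in probability through, and a generating-class argument to reach all Borel sets. The only difference is cosmetic: you run a functional monotone class theorem over continuous cylinder functions of $C_b$-evaluations (after strengthening the statement to bounded Borel functions), while the paper stays with indicators and applies Dynkin's lemma twice --- first to product sets $B_0\times\cdots\times B_T$, then to evaluation preimages $e_{U}^{-1}(J)$ --- before approximating the indicators $1_J$ by continuous functions.
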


\begin{proof}[Proof of Lemma \ref{lem: approx indicator}]
	If $r = 0$, then since $\Ban$ is a Polish space and $\hat \bX^0 = X$, the claim holds due to Urysohn's lemma and Dynkin's lemma.
	
	Now consider the case $r \ge 1$. Since $ \cp_r $ is an algebra, by Dynkin's lemma it suffices to consider the case $B = B_0 \times \ldots \times B_T$, where each $B_i$ is a Borel set of $ (\idx \to \cM_{r-1}(\Ban))$. Hence we have
	\begin{align}
	1_B \circ \hat \bX^r = 1_{B_0} \circ \hat \bX^r_0 \times \ldots \times 1_{B_T} \circ \hat \bX^r_{T}.
	\end{align}
	Furthermore, since $\Meas{\idx \to \cM_{r-1}(\Ban)}$ carries the Borel $\sigma$--algebra generated by the sets of the form
	\begin{align}
	&e_U^{-1}(J) := \{\mu \in \Meas{\idx \to \cM_{r-1}(\Ban)}: e_{U}(\mu) := \mu(U) \in J\}, \\ 
	&U \text{ Borel set in } (\idx \to \cM_{r-1}(\Ban)), \quad J \subseteq [0,1],
	\end{align}
	we may use Dynkin's lemma again and assume that $B_i = e_{U_i}^{-1}(J_i)$ for some Borel set $U_i$ in $(\idx \to \cM_{r-1}(\Ban))$ and some interval $ J\subseteq [0,1]$. Now, using that $\hat \bX^r_t = \mathbb{P}(\hat \bX^{r-1} \in \cdot |\mathcal{F}_t)$, it holds that for all $t$,
	\begin{align}
	1_{B_t} \circ\hat \bX^r_t = 1_{J_t} \circ \EE[1_{U_n} \circ \hat \bX^{r-1} | \mathcal{F}_t].
	\end{align}
	By the induction hypothesis, we have 
	\begin{align}
	1_{U_n} \circ \hat \bX^{r-1} = \lim_{k \rightarrow \infty} f^n_k(\bX),
	\end{align}
	where every $f^n_{k}$ is of rank at most $ r-1 $ and is uniformly bounded, so every $\EE[f^n_{k}(\bX)|\mathcal{F}_t]$ is of rank at most $r$. Now we choose a sequence of uniformly bounded continuous functions $(\varphi_k)_{k \ge 1}$ (say, uniformly bounded by $1$) which approximates $1_{J_0} \times \ldots \times 1_{J_I}$ pointwise, so that $1_B \circ \hat \bX^r = \lim_{j \rightarrow \infty} \varphi_j(\hat \bX^r_0, \ldots, \hat \bX^r_T)$  a.s. (up to taking a subsequence if necessary). From the above observations we see that for each $j$, 
	\begin{align}
	\varphi_j(\hat \bX^r_0, \ldots, \hat \bX^r_T) = \lim_{k \rightarrow \infty} \varphi_j((\EE[f^n_k(\bX)|\mathcal{F}_t])_{t \in \idx}),
	\end{align}
	where every $\varphi_j((\EE[f^n_{k}(\bX)|\mathcal{F}_t])_{t \in \idx})$ is by definition an adapted functional of rank at most $r$. This shows that we can find a sequence of adapted functionals $(f_k)_{k\ge 1}$ of rank at most $r$, such that $1_B \circ \hat \bX^{r} = \lim_{k \rightarrow \infty} f_k(\bX)$ in probability.
\end{proof}
\begin{proof}[Proof of Theorem~\ref{thm:2}]
	\Implies{itm: equal mod r}{itm: equal pred proc2}.
	Using Lemma~\ref{lem: approx indicator}, it follows by an induction argument that $1_B \circ \hat \bX^r = \lim_{k \rightarrow \infty}f_{k}(\bX)$ implies that $1_B \circ \hat \bY^r = \lim_{k \rightarrow \infty}f_{k}(\bY)$. By \eqref{itm: equal mod r}, we have that $\EE[f_{k}(\bX)] = \EE[f_{k}(\bY)]$ for all $k\geq0$, so by the dominated convergence theorem 
	\begin{align}
	\EE[1_B \circ \hat \bX^r] = \EE[1_B \circ \hat \bY^r] \text{ for any Borel set } B
	\end{align}
	i.e.~$\Law{\hat \bX^r} = \Law{\hat \bY^r}$. 
	
	\Implies{itm: equal pred proc2}{itm: equal pred proc3}.
	For a Polish space $\mathcal{X}$, let $\MeasA{\mathcal{X}} \subset \Meas{\mathcal{X}}$ be the set of Dirac measures on $\cX$ $\{ \delta_{x} \, : \,x \in \mathcal{X} \}$.
	Define $p: \MeasA{\cX} \rightarrow \cX$ by $p(\delta_x) := x$ and note that $p$ is continuous with respect to the subspace topology on $\MeasA{\cX}$. Define 
	\begin{align}
	\pi : (\idx \to \mathcal{X}) \rightarrow \cX, \quad \pi(x)=x_T \text{ for } x=(x_1,\ldots,x_T) \in (\idx \to \mathcal{X}), \, \idx = \{1,\ldots,T\}.
	\end{align} 
	For $g: \cX \rightarrow \cX$ define $\id_{\mathcal{X}} \oplus g: \cX \rightarrow \cX^2$, as $(\id_\cX\oplus g)(x)= (x,g(x))$.
	In what follows, although the underlying space $\mathcal{X}$ may vary from line to line, we will use the same notation as above for simplicity. Since $\hat \bX^r_T = \mathbb{P}(\hat \bX^{r-1} \in \cdot|\mathcal{F}_T)$, we can write 
	\[
	\hat \bX^i_T = \delta_{\hat \bX^{i-1}} \in \MeasA{\idx \rightarrow \meas{r-1}}.
	\]
	For each $r$, define
	\[g_r:\idx \rightarrow \meas{r},\, g_r=p \circ \pi.\]
	Using that $g_r(\hat \bX^r) = \hat \bX^{r-1}$, it follows that for $r \ge 1$,
	\begin{align}
	(\hat \bX^r, \ldots, \hat \bX^{r-s}) = (\id \oplus g_{r-s+1}) \circ  (\hat \bX^r, \ldots, \hat \bX^{r-s+1}),
	\end{align}
	where $\id$ is applied to $\cX={\meas{r}^I \times \cdots\times \meas{1}^I}$.
	Since, $\id \oplus g_r$ is continuous we can iterate this composition to build a continuous function $G$, such that 
	\begin{align}
	(\hat \bX^r, \ldots, \hat \bX^0) = G(\hat \bX^r).
	\end{align}
	As a result, for any bounded continuous function $F$ defined on $\meas{r}^I \times \ldots \times \meas{0}^I$, we have 
	\[
	\EE[F(\hat \bX^r, \ldots, \hat \bX^0)] = \EE[F \circ G (\hat \bX^r)],
	\]
	Using \ref{itm: equal pred proc2} and denoting for brevity
	\[
	E:= (\idx \to \meas{r}(\cX))\times \cdots \times (\idx \to \meas{1}(\cX))
	\]
	we deduce that
	\begin{align*}
	\Law{\hat \bX^r} = \Law{\hat \bY^r} &\Rightarrow \forall F \in C_b(E), \quad  \EE[F \circ G (\hat \bX^r)] = \EE[F \circ G (\hat \bY^r)] \\
	&\Leftrightarrow \forall F \in C_b(E), \quad \EE[F(\hat \bX^r, \ldots, \hat \bX^0)] = \EE[F(\hat \bY^r, \ldots, \hat \bY^0)] \\
	&\Leftrightarrow \Law{\hat \bX^0, \ldots, \hat \bX^r}= \Law{\hat \bY^0, \ldots \hat \bY^r}.
	\end{align*}
	\Implies{itm: equal pred proc3}{itm: equal mod r}.
	We prove by induction that for any $r \ge 0$, and $f \in \cp_r$, there exists some bounded and Borel measurable $ \tilde f : (\idx \to \meas{r}(\Ban)) \rightarrow \R $ 
	\begin{align}
	f(\bX)  = \tilde{f}(\hat \bX^r).
	\end{align}
	The case $r=0$ is clear since $\hat \bX^0 = X$ so we can take $\tilde{f} = f$, which is indeed bounded and Borel measurable.
	
	For the induction step, assume the claim holds up to some $r-1$, $r \ge 2$. Then given $f \in \cp_{r-1}$, there exists a bounded Borel measurable function $\tilde{f}$ defined on $(\idx \to \meas{r-1}(\Ban))$ such that $f(\bX) = \tilde{f}(\hat \bX^{r-1})$.
	Now for every $t \in \idx $, $\EE[f(\bX) | \mathcal{F}_t]$, is an element of $\cp_r$, so by using that $f(\bX) = \tilde{f}(\hat \bX^{r-1})$, we get $\EE[f(\bX) | \mathcal{F}_t] = \EE[\tilde{f}(\hat \bX^{r-1})|\mathcal{F}_t]$.  
	
	On the other hand, since by definition $\hat \bX^r_t = \mathbb{P}(\hat \bX^{r-1} \in \cdot |\mathcal{F}_t)$ is the regular conditional distribution of $\hat \bX^{r-1}$ given $\mathcal{F}_t$, we also obtain that
	\begin{align}
	\EE[\tilde{f}(\hat \bX^{r-1})|\mathcal{F}_t] = e_{\tilde{f}}(\pi_t \circ \hat \bX^r),
	\end{align}
	where $\pi_t$ is the $t$--th coordinate mapping such that $\pi_t \circ \hat \bX^r = \hat \bX^r_t$, and $e_{\tilde{f}}$ is the evaluation map defined on $\meas{r}(\Ban)=\Meas{\idx \to\meas{r-1}(\Ban)}$ such that $e_{\tilde{f}}(\mu) := \int \tilde{f} d \mu$.
	Since $\tilde{f}$ is bounded and measurable by \cite[Corollary 7.29.1]{Bert78}, $ e_{\tilde f} $ is a bounded measurable function on $\meas{r}(\Ban)$.
	
	In other words, we have now obtained that $\EE[fX | \mathcal{F}_t] = g(\hat \bX^r)$, where $g := e_{\tilde{f}} \circ \pi_t$ is a bounded measurable mapping defined on $\mathcal{X}_r$.
	This together with the fact that $\hat \bX^{r-1}$ can be expressed as a Borel measurable function composition with $\hat \bX^r$ (see the proof of (2) $\Rightarrow$ (3)) implies that all adapted functionals of rank at most $r$ still satisfy the above claim, and completes the induction step. 
	
	\Iff{itm: equal pred proc2}{itm: equal exp sig 4}.
	By Proposition~\ref{prop: rank r exp sig injective}, $\esig{r}$ is injective on $\pr_{r}(\vs)$ hence the equivalence follows immediately from Proposition \ref{prop: concrete description of expected signature of higher rank} and the fact that $\Law{\hat \bX^r}, \Law{\hat \bY^r} \in \pr_{r+1}(\vs)$.
\end{proof}

The metrics $d_r$ (cf.~Definition \ref{distance}) locally characterize the rank $r$ extended weak topology $\hat \tau_r$.
\begin{proposition}\label{prop: MMD and rank r extended weak topology}
  Let $\Ban$ be a separable Banach space, $ (\bX^n)_{n\geq 0} \subset \APr(\Ban) $, $\bX \in \APr(\Ban)$, and $r \ge 0$.
	\begin{enumerate}
  \item \label{itm:1} If $ (\bX^n) $ converges to $ \bX $ in $(\APr(\Ban), \hat \tau_r)$, then $ d_{r}(\bX^n,\bX) \to 0 $ as $n \to \infty$.
		\item \label{itm:2} If $(\bX^n)_{n\geq 0}$ is contained in a compact set of $(\APr(\Ban), \hat \tau_r)$, then $ d_{r}(\bX^n,\bX) \to 0 $ as $n \to \infty $ implies that $ (\bX^n) $ converges to $ \bX $ in $(\APr(\Ban), \hat \tau_r)$.
	\end{enumerate}
\end{proposition}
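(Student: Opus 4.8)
The plan is to transport the whole statement into the space $\cP_{r+1}(\tops)$ of rank $r+1$ probability measures, using that $\Phi_r$ factors through the rank $r$ prediction process. Writing $\mu_n \coloneqq \Law{\hat \bX^{r,n}}$ and $\mu \coloneqq \Law{\hat \bX^r}$, Definition~\ref{def: higher rank extended weak topology} says precisely that $\bX^n \to \bX$ in $\hat \tau_r$ if and only if $\mu_n \to \mu$ weakly in $\cP_{r+1}(\tops)$, while Definition~\ref{distance} gives $\Phi_r(\bX) = \esig{r+1}(\mu)$ and hence $d_r(\bX^n,\bX) = \lVert \esig{r+1}(\mu_n) - \esig{r+1}(\mu)\rVert_{r+1}$. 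The two facts I will use about the rank $r+1$ expected signature are exactly those furnished by Proposition~\ref{prop: rank r exp sig injective}: $\esig{r+1}$ is continuous from $\cM_{r+1}(\Ban)$ with its weak topology into the Banach space $\Ta{r+1}{\Ban}$, and it is injective. Since $\Ban$ is separable and the iterated path/measure construction preserves separability (as already noted in the proof of Proposition~\ref{prop: rank r exp sig injective}), the weak topology on $\cP_{r+1}(\tops)$ is metrizable; I fix a compatible metric $\rho$ for later use.

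For item~\ref{itm:1} there is essentially nothing beyond invoking continuity. If $\bX^n \to \bX$ in $\hat \tau_r$, then $\mu_n \to \mu$ weakly, so continuity of $\esig{r+1}$ yields $\esig{r+1}(\mu_n) \to \esig{r+1}(\mu)$ in $\Ta{r+1}{\Ban}$, which is exactly $d_r(\bX^n,\bX) \to 0$.

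For item~\ref{itm:2} I would run the standard subsequence argument. By hypothesis $(\bX^n)$ lies in a set that is compact in $\hat \tau_r$; pushing it forward under the (by definition continuous) map $\bX \mapsto \Law{\hat \bX^r}$ shows that $(\mu_n)$ lies in a weakly compact, hence by metrizability of $\rho$ sequentially compact, subset of $\cP_{r+1}(\tops)$. To prove $\mu_n \to \mu$ weakly it suffices to show that every subsequence admits a further subsequence converging weakly to $\mu$. Given a subsequence, I extract a weakly convergent sub-subsequence $\mu_{n_k} \to \nu$. Continuity gives $\esig{r+1}(\mu_{n_k}) \to \esig{r+1}(\nu)$, whereas the assumption $d_r(\bX^{n_k},\bX) \to 0$ gives $\esig{r+1}(\mu_{n_k}) \to \esig{r+1}(\mu)$; uniqueness of limits in the Banach space $\Ta{r+1}{\Ban}$ forces $\esig{r+1}(\nu) = \esig{r+1}(\mu)$, and injectivity of $\esig{r+1}$ then yields $\nu = \mu$. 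Thus every sub-subsequence converges weakly to $\mu$, so $\mu_n \to \mu$ weakly, i.e.\ $\bX^n \to \bX$ in $\hat \tau_r$.

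The only genuinely delicate point is the passage from compactness to sequential compactness in item~\ref{itm:2}: it is what permits the extraction of convergent sub-subsequences, and it is precisely why the compactness hypothesis cannot be dropped. I would justify it through metrizability of the weak topology on $\cP_{r+1}(\tops)$, which rests on the separable structure of the iterated measure spaces used in Proposition~\ref{prop: rank r exp sig injective}. Everything else is a formal consequence of the continuity and injectivity of $\esig{r+1}$ together with the triangle inequality for the norm $\lVert\cdot\rVert_{r+1}$.
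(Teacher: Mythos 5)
Your proof is correct and follows essentially the same route as the paper's: item (1) is the continuity of the robust expected signature (Proposition \ref{prop: rank r exp sig injective}) applied to the laws of the rank $r$ prediction processes, and item (2) combines compactness with injectivity. The differences are bookkeeping rather than strategy, but they are worth noting. For item (2) the paper stays in process space: it extracts from the compact set a limit $\bY \in \APr(\Ban)$ with $\bX^n \to \bY$ in $\hat\tau_r$, applies item (1) and the triangle inequality to get $d_r(\bX,\bY)=0$, and then invokes Theorem~\ref{thm:2} to conclude $\Law{\hat\bX^r} = \Law{\hat\bY^r}$; you instead push everything into $\cP_{r+1}(\Ban)$ and use injectivity of $\esig{r+1}$ directly, never needing to realize the limit measure as the prediction-process law of an actual adapted process. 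Your version is also more careful on one genuine point: in a general topological space a sequence in a compact set need not have a convergent subsequence, and the paper's assertion that there exists $\bY$ with $\bX^k \to \bY$ tacitly uses exactly the metrizability (hence sequential compactness) of the weak topology on $\cP_{r+1}(\Ban)$ that you justify via separability of the iterated measure spaces; your sub-subsequence argument then upgrades subsequential convergence to convergence of the full sequence, a step the paper leaves implicit. Both arguments reach the same conclusion; yours is the more self-contained write-up of the same idea.
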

\begin{proof}
(\ref{itm:1}) 
 $\bX^k$ converging to $\bX$ in the rank $r$ extended weak topology means that $\Law{\hat \bX^{k,r}}$ converges to $\Law{\hat \bX^r}$. By Proposition \ref{prop: concrete description of expected signature of higher rank}, $\bar \EE[\bX^{r+1}_0] = \EE[\Sigo \circ \ESig{r}{\hat \bX^r}]$, and by Proposition \ref{prop: rank r exp sig injective}, $\Sigo \circ \esig{r}$ is a continuous and bounded function on $ \cP_r(\Ban) $. The implication follows immediately.  \\
 (\ref{itm:2}) By assumption, $ (\bX^k)_{k\geq 0}$ is contained in a compact set with respect to the rank $r$ extended weak topology on $\APr(\vs)$. Hence, there exists a $\bY \in \APr(\vs)$ such that $\bX^k$ converges to $\bY$ in the rank $r$ extended weak topology. From the proof of (\ref{itm:2} $ \Rightarrow $ \ref{itm:1}) we have $ d_{r}(\bX^k,\bY) \to 0 $. Hence $d_{r}(\bX,\bY) = 0$, or equivalently, $\lVert \EE\bar \bX_0^{r+1} - \EE\bar \bY_0^{r+1} \rVert_{r+1} = 0$. Now using Theorem \ref{thm:2} we obtain that $\Law{\hat \bX^r} = \Law{\hat \bY^r}$.\\
 \end{proof}
We now relate $d_r$ and the rank $r$ extended weak topology with the adapted topology of rank $r$, $\tau_r$ (cf.~Definition \ref{def: adapted distribution and topology}).
\begin{proposition}\label{prop: extended r weak topology and adapted topology of rank r}
  For a separable Banach space $\Ban$, $\tau_r \subset \hat \tau_r$.
That is, convergence of $(\bX^n)$ in $(\APr(\Ban), \hat \tau_r)$ to $\bX$ implies convergence of $(\bX^n)$ to $\bX$ in $(\APr(\Ban), \tau_r)$. 	
Moreover, for processes evolving in a compact state space $\cK$, $\cK \subset \Ban$, the converse holds. That is
\begin{align}
  (\APr(\cK), \tau_r)= (\APr(\cK), \hat \tau_r). 
\end{align}

\end{proposition}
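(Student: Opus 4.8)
The plan is to prove the two assertions in turn: the inclusion $\tau_r \subseteq \hat\tau_r$ for an arbitrary separable Banach space $\Ban$, and then the reverse inclusion under the compactness hypothesis; combining them gives the asserted equality. The engine for both directions is a sharpening of the implication \Implies{itm: equal pred proc3}{itm: equal mod r} in the proof of Theorem~\ref{thm:2}. There one shows that each $f \in \cp_r$ can be written as $f(\bX) = \tilde f(\hat \bX^r)$ for a bounded Borel measurable $\tilde f$ on $(\idx \to \cM_r(\Ban))$; I would first upgrade this to the statement that $\tilde f$ may be chosen bounded and \emph{continuous} on the (Polish, by separability of $\Ban$) space $(\idx \to \cP_r(\Ban))$.

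To obtain the upgrade I would rerun that induction while tracking continuity. For $r=0$ the function $\tilde f(x) = g(x_{t_1},\dots,x_{t_n})$ is continuous, and the composition step of Definition~\ref{def: rank of adapted functionals} preserves continuity. The only delicate case is the conditional-expectation step $f(\bX) = \EE[g(\bX)\vert\cF_t]$ with $g \in \cp_{r-1}$: writing $g(\bX) = \tilde g(\hat \bX^{r-1})$ with $\tilde g$ bounded continuous and using $\hat X^r_t = \PP(\hat \bX^{r-1}\in\cdot\,\vert\,\cF_t)$, one obtains $f(\bX) = e_{\tilde g}(\pi_t \circ \hat \bX^r)$ with $e_{\tilde g}(\mu) = \int \tilde g\, d\mu$. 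The key point is that $e_{\tilde g}$ is continuous on $\cP_r$ precisely because $\tilde g$ is bounded continuous, this being the defining property of the weak topology, so $\tilde f = e_{\tilde g}\circ\pi_t$ is again bounded continuous. With this representation, $\hat\tau_r$-convergence, i.e.\ $\Law{\hat \bX^{r,n}} \to \Law{\hat \bX^r}$ weakly, immediately gives $\EE[f(\bX^n)] = \int \tilde f\, d\Law{\hat \bX^{r,n}} \to \int \tilde f\, d\Law{\hat \bX^r} = \EE[f(\bX)]$ for every $f \in \cp_r$, which is exactly $\tau_r$-convergence; hence $\tau_r \subseteq \hat\tau_r$.

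For the converse under compactness I would argue by a subsequence and Prohorov argument. Since $\cK$ is compact, iterating the fact that the probability measures on a compact metric space form a compact space shows that $\cP_r(\cK)$, the path space $(\idx \to \cP_r(\cK))$, and $\cP_{r+1}(\cK) = \cP(\idx \to \cP_r(\cK))$ are all compact. Thus $(\Law{\hat \bX^{r,n}})_n$ is relatively compact in the weak topology, and it suffices to show that every weakly convergent subsequence has limit $\Law{\hat \bX^r}$. Suppose then that $\Law{\hat \bX^{r,n_k}} \to \nu$ weakly. Testing against the continuous functions $\tilde f$ constructed above and using $\tau_r$-convergence yields $\int \tilde f\, d\nu = \lim_k \EE[f(\bX^{n_k})] = \EE[f(\bX)] = \int \tilde f\, d\Law{\hat \bX^r}$ for all $f \in \cp_r$.

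It remains to show that the family $\mathcal{A}_r := \{\tilde f : f \in \cp_r\}$ determines measures on the compact space $(\idx \to \cP_r(\cK))$, which I expect to be the main obstacle. I would establish this by Stone--Weierstrass: $\mathcal{A}_r$ is a subalgebra of $C((\idx \to \cP_r(\cK)),\R)$ containing the constants (closure under sums and products follows from the composition step once the continuous outer function is truncated to the relevant bounded range), so it suffices to prove that $\mathcal{A}_r$ separates points, which I would do by induction on $r$. Given distinct trajectories $\omega\ne\omega'$, they differ at some coordinate $t$, i.e.\ $\omega(t)\ne\omega'(t)$ as probability measures on $(\idx \to \cP_{r-1}(\cK))$; two distinct such measures are separated by some bounded continuous $h$, and by the inductive density of $\mathcal{A}_{r-1}$ this $h$ may be replaced by some $\tilde g \in \mathcal{A}_{r-1}$, so that $\tilde f = e_{\tilde g}\circ\pi_t \in \mathcal{A}_r$ separates $\omega$ and $\omega'$. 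Stone--Weierstrass then gives density of $\mathcal{A}_r$ in $C((\idx \to \cP_r(\cK)),\R)$, and the identity $\int \tilde f\, d\nu = \int \tilde f\, d\Law{\hat \bX^r}$ on $\mathcal{A}_r$ forces $\nu = \Law{\hat \bX^r}$. This identifies the limit of every convergent subsequence, so the whole sequence converges weakly to $\Law{\hat \bX^r}$; that is, $\tau_r$-convergence implies $\hat\tau_r$-convergence, and with the first part we conclude $(\APr(\cK),\tau_r) = (\APr(\cK),\hat\tau_r)$.
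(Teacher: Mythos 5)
Your proposal is correct and rests on the same two pillars as the paper's own proof: the continuous representation $f(\bX)=\rho_f(\hat \bX^r)$ for $f\in\cp_r$, obtained by rerunning the induction behind Theorem~\ref{thm:2} while tracking continuity (the conditional-expectation step becoming $e_{\tilde g}\circ\pi_t$, continuous by the definition of the weak topology), and the Stone--Weierstrass theorem applied to the algebra $\mathcal{A}=\{\rho_f : f\in\cp_r\}$ on the compact space $(\idx\to\cP_r(\cK))$. You differ from the paper in two local steps of the converse direction. First, the paper never extracts subsequences: once $\mathcal{A}$ is uniformly dense, it approximates an arbitrary $\rho\in C_b(\idx\to\cP_r(\cK);\R)$ within $\varepsilon$ by some $\rho_f$ and bounds $|\EE[\rho(\hat \bX^{k,r})]-\EE[\rho(\hat \bX^r)]|\le |\EE[f(\bX^k)]-\EE[f(\bX)]|+2\varepsilon$ directly, whereas you compactify the space of laws $\cP(\idx\to\cP_r(\cK))$, pass to weakly convergent subsequences, and use density of $\mathcal{A}$ to identify every subsequential limit with $\Law{\hat \bX^r}$; both are valid, the paper's route being slightly shorter, yours making explicit that compactness of the space of laws (not only of the path space) is available. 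Second, for point separation the paper invokes the indicator-approximation argument from the proof of \Implies{itm: equal mod r}{itm: equal pred proc2} in Theorem~\ref{thm:2}, while you give a self-contained induction: two distinct paths differ at some coordinate $t$, the two distinct measures there are separated by some bounded continuous $h$, and the inductive density of $\mathcal{A}_{r-1}$ lets you replace $h$ by some $\tilde g\in\mathcal{A}_{r-1}$, so that $e_{\tilde g}\circ\pi_t\in\mathcal{A}_r$ separates; this keeps the whole argument inside the Stone--Weierstrass induction and avoids the measure-theoretic lemma, which is a mild simplification. One technicality you gloss over (as does the paper) is that in the composition step the $f_i$ may have ranks strictly below $r$, so their representations must be lifted to $(\idx\to\cP_r)$; this is harmless because $\hat \bX^{r-1}$ is a continuous image of $\hat \bX^r$ via the map $p\circ\pi$ constructed in the proof of \Implies{itm: equal pred proc2}{itm: equal pred proc3}.
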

\begin{proof}
 First, it is easy to use an induction argument to show that for every $r \ge 0$, for every $f \in \cp_r$, there exists a bounded continuous function $\rho_f$ defined on $\idx \to \cP_r(\Ban)$ such that $f(\bX) = \rho_f(\hat \bX^r)$ for all $\bX \in \APr(\vs)$. As a consequence, if $ \bX^k $ converges to $ \bX $ in the rank $r$ extended weak topology; that is,  $\Law{\hat \bX^{k,r}}$ converges weakly to $\Law{\hat \bX^r}$ in $\cP(\idx \to \cP_r(\Ban))$, then it indeed holds that for any $f \in \cp_r$
$$
\lim_{k \to \infty} \EE[\rho_f(\hat \bX^{k,r})] = \EE[\rho_f(\hat \bX^r)]
$$
which is equivalent to $\lim_{k \to \infty} \EE[f(\hat \bX^{k,r})] = \EE[f(\hat \bX^r)]$; i.e., $ \bX^k $ converges to $ \bX $ in the adapted topology of rank $ r $. Also note that this result holds without assuming that $ (\bX^k)_{k\geq 0}$ is contained in a compact set with respect to the rank $r$ extended weak topology on $\APr(\vs)$.\\
On the other hand, by the definition of adapted functionals (cf. Definition \ref{def: adapted functionals}) one can easily verify that the class $\mathcal{A} := \{\rho_f : f \in \cp_r\}$ is a subalgebra in $C_b(\idx \to \cP_r(\cK); \R)$. Moreover, using the proof of \Implies{itm: equal mod r}{itm: equal pred proc2} in Theorem \ref{thm:2} we can also prove that $\mathcal{A}$ separate points on $(\idx \to \cP_r(\cK))$. Therefore, since the space $(\idx \to \cP_{r}(\cK))$ is obviously compact (recall that $\pr(\cK)$ is compact in the weak topology, and then by induction one can prove the compactness for all $\cP_r(\cK)$), $\mathcal{A}$ is dense in $C_b(K;\R)$ under the uniform topology by the Stone--Weierstrass theorem. Hence, for any given $\rho \in C_b(\idx \to \cP_{r}(\cK); \R)$ and any $\varepsilon > 0$, we can pick a $\rho_f \in \mathcal{A}$ such that $\sup_{x \in (\idx \to \cP_{r}(\cK)) }|\rho(x) - \rho_f(x)| \le \varepsilon$, and deduce that
\begin{align*}
|\EE[\rho(\hat \bX^{k,r})] - \EE[\rho(\hat \bX^r)]| &\le |\EE[\rho_f(\hat \bX^{k,r})] - \EE[\rho_f(\hat \bX^r)]| +2\varepsilon \\
&= |\EE[f(\hat \bX^{k})] - \EE[f(\hat \bX)]| +2\varepsilon  \to 0,
\end{align*}
where the last convergence holds as $\bX^k$ converges to $\bX$ in the adapted topology of rank $r$.
\end{proof}
Putting everything together, gives the following Theorem \ref{thm:main} which in turn implies Theorem~\ref{thm:mainA}. 

\begin{theorem}\label{thm:main}
  Let $\Ban$ be a separable Banach space and $r \ge 0$.
  Then for $\bX,\bY \in \APr(\Ban)$ 
  \begin{align}
    \left(\EE[f(\bX)] = \EE[f(\bY)] \quad \forall f \in \cp_r\right) 
    \text{ if and only if }{\Phi_r(\bX) = \Phi_r (\bY)}.
  \end{align}
Moreover, 
\begin{enumerate}
\item
  the map $\Phi_r$ locally induces the topology $\tau_r$,
\item
  the semimetric $d_r$ locally metrizes the topology $\tau_r$.
\end{enumerate}
If $\cK \subset \Ban$ is compact, then the above statements apply without localization, as stated in Theorem~\ref{thm:mainA}.
in this case of a compact state space, one can also replace the robust signature in the definition of $\Phi_r$ with the classical signature.  
\end{theorem}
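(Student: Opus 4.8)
The plan is to assemble Theorem~\ref{thm:main} from the three results that directly precede it, rather than to prove anything from scratch. The asserted equivalence between $\EE[f(\bX)]=\EE[f(\bY)]$ for all $f\in\cp_r$ and $\Phi_r(\bX)=\Phi_r(\bY)$ is exactly the equivalence \Iff{itm: equal mod r}{itm: equal exp sig 4} of Theorem~\ref{thm:2}, so the ``if and only if'' part needs no new argument. An immediate by-product, used repeatedly below, is that the fibres of $\Phi_r$ are precisely the $\equiv_r$-equivalence classes, so the semi-metric $d_r$ descends to a genuine metric on the set of $\equiv_r$-equivalence classes.

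For the topological claims I would first record the factorisation $\Phi_r=\esig{r+1}\circ L$, where $L:\bX\mapsto\Law{\hat\bX^r}$ takes values in $\cP_{r+1}(\Ban)$ with its weak topology, and note that $\hat\tau_r$ is by definition the initial topology of $L$. Since $\esig{r+1}$ is continuous and injective (Proposition~\ref{prop: rank r exp sig injective}), $\Phi_r$ is $\hat\tau_r$-continuous and its initial topology is a priori coarser than $\hat\tau_r$; conversely, on a set $C$ that is compact for $\hat\tau_r$ the image $L(C)$ is weakly compact, so $\esig{r+1}$ restricted to $L(C)$ is a continuous injection of a compact space into a Hausdorff space, hence a homeomorphism onto its image, and the two initial topologies agree on $C$. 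Feeding the same localisation into Proposition~\ref{prop: MMD and rank r extended weak topology} shows that $d_r$ metrizes $\hat\tau_r$ on $\hat\tau_r$-compact sets. This establishes both ``moreover'' statements with $\hat\tau_r$ in place of $\tau_r$.

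It then remains to replace $\hat\tau_r$ by $\tau_r$, which is the source of the qualifier ``locally''. By Proposition~\ref{prop: extended r weak topology and adapted topology of rank r} one always has $\tau_r\subseteq\hat\tau_r$, with equality once the state space is compact. In the compact case the whole image $L(\APr(\cK))$ lies in the compact space $\cP_{r+1}(\cK)$, so every sequence is automatically contained in an $\hat\tau_r$-compact set and the previous paragraph applies without restriction, yielding Theorem~\ref{thm:mainA}. The closing remark, that the classical signature may replace the robust one here, follows by rerunning the induction behind Proposition~\ref{prop: rank r exp sig injective} with Theorem~\ref{cor:injective} supplying injectivity at each rank in place of the robust normalisation; the same induction proves the postponed Proposition~\ref{prop:higher rank exp}. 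In the non-compact case $\tau_r$ may be strictly coarser than $\hat\tau_r$, so the assertions survive only after restricting to an $\hat\tau_r$-relatively compact set, which is what ``locally'' means.

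I expect the only genuine difficulty to be the direction ``$\tau_r$-convergence $\Rightarrow d_r\to0$'' required for the metrization: because $\tau_r$ is a priori coarser than $\hat\tau_r$, convergence in $\tau_r$ does not directly activate Proposition~\ref{prop: MMD and rank r extended weak topology}. I would handle it by a subsequence argument on the relevant $\hat\tau_r$-(relatively) compact set. Given $\bX^n\to\bX$ in $\tau_r$, extract an $\hat\tau_r$-convergent subsequence $\bX^{n_k}\to\bY$; then $\bX^{n_k}\to\bY$ also in $\tau_r$, so $\bX\equiv_r\bY$ and hence $\Phi_r(\bX)=\Phi_r(\bY)$ by the first part, while $d_r(\bX^{n_k},\bY)\to0$ by Proposition~\ref{prop: MMD and rank r extended weak topology}. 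The triangle inequality for $d_r$ then gives $d_r(\bX^{n_k},\bX)\to0$, and since every subsequence contains such a sub-subsequence the full sequence converges. The Hausdorff separation of $\equiv_r$-classes under $\tau_r$ tacitly used here is itself guaranteed by Theorem~\ref{thm:2}.
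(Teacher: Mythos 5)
Your proposal is correct and takes essentially the same route as the paper: the paper's own proof of Theorem~\ref{thm:main} is exactly the assembly (``putting everything together'') of Theorem~\ref{thm:2} for the ``if and only if'', Proposition~\ref{prop: MMD and rank r extended weak topology} for the local metrization of $\hat\tau_r$, and Proposition~\ref{prop: extended r weak topology and adapted topology of rank r} for $\tau_r\subseteq\hat\tau_r$ with equality on compact state spaces. You additionally spell out details the paper leaves implicit -- the compact--Hausdorff homeomorphism argument identifying the initial topologies, the subsequence bridge from $\tau_r$-convergence to $d_r\to 0$, and the induction giving Proposition~\ref{prop:higher rank exp} and the classical-signature replacement in the compact case -- all of which are consistent with the paper's intended argument.
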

Restricted to $r=0$, we recover the fact that the expected signature can (locally) metrize weak convergence \cite{Oberhauser18}; restricted to $r=1$ this (locally) induces Aldous extended weak topology \cite{Aldous81}. With $r=1$ and $ (\cF_t)_{0 \leq t \leq T} $ the natural filtration it adds another entry to the list in \cite{Beiglbock19} of distances that induce the same topology, and therefore we obtain Corollary \ref{corollary:In natural filtration case}.

\subsection{Tightness in Extended Weak topology: a Brief Discussion}\label{sec:tightness}
For the case $\Ban = \R^d$ one can obtain a tightness criterion for extended weak topology.
First we note that in this case $d_0$ really metrizes the usual weak convergence on $\R^d$. 
\begin{proposition}\label{prop: d0 metrizes weak convergence}
 $\mu_n$ converges to $\mu$ weakly in $\pr(\idx \to \R^d)$ if and only if $\lim_{n \to \infty} d_0(\mu_n,\mu) = 0$.
\end{proposition}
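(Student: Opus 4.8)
The plan is to unwind the definitions until the statement becomes the assertion that the robust expected signature metrizes weak convergence on $\pr(\idx \to \R^d)$, and then to exploit that $\idx \to \R^d$ is finite-dimensional. Since $\hat \bX^0 = X$, Definition \ref{distance} reads $\Phi_0(\bX) = \ESig{1}{\mu}$ with $\mu = \Law{\bX}$, and $\ESig{1}{\mu} = \EE_{X\sim\mu}[\Lambda\Sigo(X)]$, where $\Lambda\Sigo$ is the robust (normalized) rank-$1$ signature; so for laws $\mu_n,\mu$ the quantity $d_0(\mu_n,\mu) = \lVert \ESig{1}{\mu_n} - \ESig{1}{\mu}\rVert_1$ is the norm distance between robust expected signatures.

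The forward implication is the easy direction and is essentially the continuity half of Proposition \ref{prop: rank r exp sig injective}: the map $\Lambda\Sigo$ is bounded and continuous with values in the unit ball of $\Ta{1}{\R^d}$, so by the Skorokhod representation theorem and dominated convergence for the (uniformly bounded) $\Ta{1}{\R^d}$-valued integrand, $\mu_n \to \mu$ weakly implies $\ESig{1}{\mu_n} \to \ESig{1}{\mu}$ in norm, i.e. $d_0(\mu_n,\mu)\to 0$.

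For the converse I would argue through vague convergence. Norm convergence of the expected signatures implies, after pairing with a fixed dual element, that $\int \langle \ell, \Lambda\Sigo(x)\rangle\,\mu_n(dx) \to \int \langle\ell,\Lambda\Sigo(x)\rangle\,\mu(dx)$ for every $\ell \in \bigoplus_{m\ge 0}((\R\oplus\R^d)^{\otimes m})^\star$. The structural point is that $\mathcal A := \{x \mapsto \langle\ell,\Lambda\Sigo(x)\rangle\}$ is a subalgebra of $C_b(\idx\to\R^d)$: since $\Lambda$ is the graded dilation $\delta_{\lambda(\cdot)}$ the pairing expands as $\sum_m \lambda(x)^m\langle\ell_m,\Sigo(x)_m\rangle$, and the shuffle identity for $\Sigo$ (the shuffle of a degree-$m$ and a degree-$m'$ functional has degree $m+m'$) matches the factor $\lambda(x)^{m+m'}$, so products stay in $\mathcal A$. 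Writing $\mathcal A_0 \subset \mathcal A$ for the functionals with vanishing degree-zero part, the same computation shows $\mathcal A_0$ is a subalgebra, and I claim $\mathcal A_0 \subset C_0(\idx\to\R^d)$: because $\idx$ is finite one checks from the explicit iterated-integral form of the signature of a piecewise-linear path that $\lVert\Sigo(x)\rVert \to \infty$ as $|x|\to\infty$, and the dilation normalization sends tensors of large norm to the unit $\mathbf 1$, whence $\Lambda\Sigo(x)\to\mathbf 1$ and $\langle\ell,\Lambda\Sigo(x)\rangle \to 0$ whenever $\ell_0 = 0$.

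With this the argument closes: $\mathcal A_0$ separates points by injectivity of $\Sigo$ (Theorem \ref{cor:injective}, item \ref{itm:inj}) and has no common zero since $\Lambda\Sigo(x)\neq\mathbf 1$ for every path (the time-augmented signature has non-trivial first level), so the locally compact Stone--Weierstrass theorem makes $\mathcal A_0$ dense in $C_0(\idx\to\R^d)$, giving $\int f\,d\mu_n \to \int f\,d\mu$ for all $f\in C_0$, i.e. vague convergence $\mu_n \to \mu$. Since the vague limit $\mu$ is a probability measure, no mass escapes to infinity --- testing against $g\in C_0$ with $\mathbf 1_K \le g \le 1$ for a compact $K$ with $\mu(K) > 1-\varepsilon$ yields $\liminf_n \mu_n(\idx\to\R^d) \ge 1-\varepsilon$ --- so $(\mu_n)$ is tight and vague convergence upgrades to weak convergence. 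I expect the main obstacle to be the decay $\Lambda\Sigo(x)\to\mathbf 1$, i.e. establishing $\mathcal A_0 \subset C_0$: this is precisely where finite-dimensionality of $\R^d$ (properness of the signature) and the behaviour of the tensor normalization at infinity enter, and it is the ingredient unavailable for a general Banach state space, which is why the clean global equivalence is stated only for $\Ban = \R^d$.
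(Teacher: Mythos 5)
Your proof is essentially correct in outline, but it takes a genuinely different route from the paper, and it has one soft spot exactly at the step you yourself flag as the main obstacle.

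The paper does not argue directly. It proves Proposition~\ref{prop: d0 metrizes weak convergence} by kernelizing: in Proposition~\ref{prop: normalized signature satisfies nice conditions} it introduces the kernel $\mathbf{k}(x,y)=\langle \sigrn{1}{x},\sigrn{1}{y}\rangle-1$, verifies that it is bounded, continuous, positive definite, characteristic to $C_b(\idx\to\vs)^\prime$, and that $\cH_{\mathbf{k}}\subset C_0(\idx\to\vs)$, and then invokes external results of Simon-Gabriel et al.\ (\cite[Lemma 4.1, Lemma 2.1]{Simon20}) to conclude in Corollary~\ref{cor: MMD characterizes weak convergence in appendix} that the associated MMD --- which is exactly $d_0$ --- metrizes weak convergence on the locally compact space $\idx\to\R^d$. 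Your argument replaces the RKHS/MMD machinery with a direct one: dual pairings of the robust signature form a subalgebra (via the shuffle identity, graded-compatibly with the dilation), the trace-zero part lies in $C_0$, locally compact Stone--Weierstrass gives density, and vague convergence upgrades to weak convergence because the limit is a probability measure. This is self-contained (no citation to \cite{Simon20} needed) and is a legitimate alternative; what the paper's kernel route buys in exchange is the positive-definite/Hilbert-space structure that it reuses later (Section~\ref{sec:tightness} rests on $\idx\to\Ta{1}{\R^d}$ being a Hilbert space and on martingale arguments there), plus the fact that the hard analytic verifications are outsourced to known lemmas.

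The soft spot: you justify $\mathcal A_0\subset C_0$ by (i) properness of the signature, $\|\Sig{x}\|\to\infty$ as $\|x\|_\infty\to\infty$, together with (ii) the claim that ``the dilation normalization sends tensors of large norm to the unit $\mathbbm{1}$.'' Claim (ii) is \emph{not} a consequence of the definition of a tensor normalization (Definition~\ref{def: tensor normalization} only requires a continuous injection into the unit ball; a normalization that pushes large tensors onto the unit sphere rather than towards $\mathbbm{1}$ satisfies it and destroys your decay). The paper sidesteps both (i) and (ii) by building the decay into the normalization itself: Proposition~\ref{prop: a feasible tensor normalization} uses $\Lambda\Sigo(x)=\delta_{\exp(-\|\Sig{x}\|-\|x\|_\infty)}\Sig{x}$, for which $\|\Lambda\Sigo(x)-\mathbbm{1}\|\le \exp(-\|\Sig{x}\|-\|x\|_\infty)\|\Sig{x}\|\le e^{-1}\exp(-\|x\|_\infty)\to 0$, since $ae^{-a}\le e^{-1}$; this is precisely the ``special technical reason'' the paper cites for including $\|x\|_\infty$ in the exponent, and it is the estimate at the heart of the paper's proof that $\cH_{\mathbf k}\subset C_0$. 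So your argument closes once you commit to this specific robust signature (which is what $\sigrno{1}$ denotes after Section~\ref{sec:tensor normalization}); then the decay is immediate and you do not need properness of $\Sigo$ at all. If instead you insist on a generic normalization, the proof as written has a genuine gap, and repairing it via properness would additionally require you to actually prove $\|\Sig{x}\|\to\infty$, which you only sketch.
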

The proof of this proposition is given in Appendix \ref{app: signature, feature map and MMD}, see Proposition \ref{prop: normalized signature satisfies nice conditions} and Corollary \ref{cor: MMD characterizes weak convergence in appendix}.\\
Now we consider $r=1$; i.e, the Aldous' extended weak topology. Let $(\mu_t)_{t \in \idx}$ be a (discrete time) path in $(\idx \to \pr(\idx \to \R^d))$ such that $\mu_t \in \pr(\idx \to \R^d)$ for all $t \in \idx$. As before let $\Sigo$ denote a normalized signature map on $\idx \to \R^d$. Then associated with $(\mu_t)_{t \in \idx}$ we obtain a $\Ta{1}{\R^d}$--valued (discrete time) path $(\int \Sig{x} \mu_t(dx))_{t \in \idx}$. This mapping will be denoted by $F$, it is easy to see (by the Skorokhod representation theorem) that $F: (\idx \to \pr(\idx \to \R^d)) \to (\idx \to \Ta{1}{\R^d})$, $F((\mu_t)_{t \in \idx}) =(\int \Sig{x} \mu_t(dx))_{t \in \idx}$,  is continuous.
\begin{lemma}\label{lemma: pushforward keep tightness}
A set $\tops \subset \cP_2(\R^d)$ is tight if and only if the set
\[
\{F_{\sharp}\mu: \mu \in \tops \} \subset \cP_1( \Ta{1}{\R^d})
\]
is tight, where $F_{\sharp}$ means the pushforward operation.
\end{lemma}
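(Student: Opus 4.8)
The plan is to prove the two implications separately and, in both, to reduce the statement over the finite index set $\idx$ to a single time coordinate. Since $\idx$ is finite, the inner space $\cX\coloneqq\cP_1(\R^d)=\pr(\idx\to\R^d)$ is Polish and $(\idx\to\cX)$ is the finite product $\prod_{t\in\idx}\cX$, so $\cP_2(\R^d)=\pr(\idx\to\cX)$. The map $F$ acts coordinatewise, $F=\prod_{t\in\idx}\Psi$ with $\Psi\coloneqq\esig{1}:\cX\to\Ta{1}{\R^d}$, $\Psi(\nu)=\int\Sig{x}\,\nu(dx)$, so (abusing notation for the projections on both product spaces) $\pi_t\circ F=\Psi\circ\pi_t$. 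Because a family of Borel probability measures on a finite product of Polish spaces is tight if and only if all its coordinate marginals are tight, and because pushing forward commutes with projections, $(\pi_t)_\sharp(F_\sharp\mu)=\Psi_\sharp((\pi_t)_\sharp\mu)$, the lemma follows once we establish the single-coordinate equivalence: a family $\tops'\subset\pr(\cX)$ is tight if and only if $\{\Psi_\sharp\nu:\nu\in\tops'\}\subset\pr(\Ta{1}{\R^d})$ is tight.

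For the forward implication I would use only that $\Psi$ is continuous (Proposition~\ref{prop: rank r exp sig injective}). Given $\varepsilon>0$, tightness of $\tops'$ yields a compact $K\subset\cX$ with $\nu(K)>1-\varepsilon$ for all $\nu\in\tops'$; then $\Psi(K)$ is compact and $(\Psi_\sharp\nu)(\Psi(K))\ge\nu(K)>1-\varepsilon$, so $\{\Psi_\sharp\nu\}$ is tight.

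The reverse implication is the substantial step. By Proposition~\ref{prop: d0 metrizes weak convergence} the semi-metric $d_0(\mu,\nu)=\lVert\Psi(\mu)-\Psi(\nu)\rVert$ metrizes weak convergence on $\cX$, and $\Psi$ is injective (Proposition~\ref{prop: rank r exp sig injective}); hence $\Psi$ is a homeomorphism of $\cX$ onto its image $Y\coloneqq\Psi(\cX)\subset\Ta{1}{\R^d}$. Given a compact $\tilde K\subset\Ta{1}{\R^d}$ witnessing tightness of the pushforwards, one has $(\Psi_\sharp\nu)(\tilde K)=\nu(\Psi^{-1}(\tilde K))$, so it suffices to show $\Psi^{-1}(\tilde K)$ is relatively compact in $\cX$: setting $K\coloneqq\overline{\Psi^{-1}(\tilde K)}$ gives $\nu(K)>1-\varepsilon$ uniformly over $\tops'$, i.e.\ tightness of $\tops'$ by Prohorov's theorem (applicable since $\cX$ is Polish). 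Relative compactness of $\Psi^{-1}(\tilde K)$ is guaranteed once $\Psi$ is proper, for which it is enough that $Y$ be closed: then $\tilde K\cap Y$ is compact and, $\Psi$ being a homeomorphism onto $Y$, $\Psi^{-1}(\tilde K)=\Psi^{-1}(\tilde K\cap Y)$ is compact.

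The main obstacle is precisely this properness, equivalently the closedness of $Y$. Continuity, injectivity and boundedness of $\Psi$ do not suffice: in an infinite-dimensional space a bounded set need not be relatively compact, and the mean embedding $\Psi(\nu)=\int\Sig{x}\,\nu(dx)$ of a non-tight family could a priori converge in norm, with mass escaping to infinity being absorbed into the bounded image. To exclude this I would invoke the defining ``nice conditions'' of the robust normalization (Proposition~\ref{prop: normalized signature satisfies nice conditions}), which are arranged so that the robust signature sends a sequence with no convergent subsequence to one with no norm-convergent subsequence; this yields properness of $\Psi$ and closes the argument. This is the same Prohorov-type subtlety flagged in Section~\ref{sec:tightness}, and it carries essentially all the analytic content of the lemma.
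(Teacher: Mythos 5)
Your reduction to a single coordinate, the forward implication, and the observation that everything hinges on $\Psi=\esigrno{1}$ being a homeomorphism onto its image $Y$ are all sound, and up to that point your argument is the paper's argument: the paper likewise invokes Proposition~\ref{prop: d0 metrizes weak convergence} to obtain the homeomorphism property of $F$ and then declares that ``the claim follows easily''. You are also right that the homeomorphism property alone does not give the reverse implication, and that what is really needed is properness of $\Psi$, equivalently closedness of $Y$.

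The gap is in your last step: Proposition~\ref{prop: normalized signature satisfies nice conditions} does not provide properness, and in fact its proof establishes the opposite. The ``nice conditions'' there are boundedness, continuity, characteristicness, and $\cH_{\mathbf{k}}\subset C_0(\idx\to\vs)$; the $C_0$ property rests on the estimate (see also Proposition~\ref{prop: a feasible tensor normalization})
\begin{align}
\bigl\lVert \sigrn{1}{y}-\mathbbm{1}\bigr\rVert \;\le\; \exp\bigl(-\lVert\Sig{y}\rVert-\lVert y\rVert_{\infty}\bigr)\,\lVert\Sig{y}\rVert \;\le\; e^{-1}\,e^{-\lVert y\rVert_{\infty}},
\end{align}
which says that any sequence of paths $(y_n)$ with $\lVert y_n\rVert_\infty\to\infty$ is mapped to a sequence converging \emph{in norm} to $\mathbbm{1}$ --- precisely the opposite of your claim that sequences with no convergent subsequence go to sequences with no norm-convergent subsequence. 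Moreover $\mathbbm{1}\notin Y$: the degree-one time component of $\esigrno{1}(\nu)$ equals $T\int\exp(-\lVert\Sig{x}\rVert-\lVert x\rVert_\infty)\,\nu(dx)>0$ for every probability measure $\nu$, while it vanishes for $\mathbbm{1}$. So $Y$ is not closed, $\Psi$ is not proper, and your argument fails at exactly the step you identify as carrying all the analytic content. Nor can this be repaired by a better argument for closedness: taking $\mu_n:=\delta_{(\delta_{y_n},\ldots,\delta_{y_n})}\in\cP_2(\R^d)$, the pushforwards $F_\sharp\mu_n=\delta_{(\sigrn{1}{y_n},\ldots,\sigrn{1}{y_n})}$ form a tight family (they converge weakly to $\delta_{(\mathbbm{1},\ldots,\mathbbm{1})}$), whereas $(\mu_n)$ is not tight, since $(\delta_{y_n})$ has no weak limit point in $\pr(\idx\to\R^d)$. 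So the obstruction you flagged is genuine --- it is the same one hidden in the paper's ``follows easily'' --- but the robust normalization, far from resolving it, is exactly what makes mass escaping to infinity invisible to the pushforward.
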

\begin{proof}
Let $\text{Im}(F) \subset (\idx \to \Ta{1}{\R^d})$ be the image of $F$, which is endowed with the subspace topology inherited from the Hilbert space $\idx \to \Ta{1}{\R^d} = \Ta{1}{\R^d}^{\idx}$. By Proposition \ref{prop: d0 metrizes weak convergence} and the construction of $d_0$, we see that $F: (\idx \to \pr(\idx \to \R^d)) \to \text{Im}(F)$ is a homeomorphism. Hence the claim follows easily.
\end{proof}
Now let $\bX = ((\Omega,\PP, (\cF_t)_{t \in \idx}),X)\in \APr(\R^d)$, we know that $\hat \bX^1 \in (\idx \to \pr(\idx \to \R^d))$ and  $\Law{\hat \bX^1} \in \pr_2(\R^d)$. Note also that $F(\hat \bX^1) = (\EE[\Sig{X}|\cF_t])_{t\in \idx}$ is a $\Ta{1}{\R^d}$--valued martingale, and thus $F_{\sharp}(\Law{\hat \bX^1})$ is a martingale law on $\idx \to \Ta{1}{\R^d}$. Hence, by Lemma \ref{lemma: pushforward keep tightness} we obtain the following characterization of tightness set for the extended weak topology:
\begin{theorem}\label{thm: tightness in extended weak topology}
	Let $\tops \subset \APr(\R^d)$.
\begin{enumerate}
	\item $\tops$ is tight in the Aldous' extended weak topology if and only if the collection of martingale laws
	$$
	\{\Law{(\EE[\Sig{X}|\cF_t])_{t\in \idx}}: \bX \in \tops\}
	$$
	is tight in $\pr(\idx \to \Ta{1}{\R^d})$.
	\item If in addition that all filtrations are natural, then the following are equivalent
	\begin{itemize}
		\item $\tops$ is tight in the Aldous' extended weak topology, Hellwig's information topology, and all other topologies mentioned in Corollary \ref{corollary:In natural filtration case}.
		\item $\tops$ satisfies the Eder's conditions (\cite[Theorem 1.4]{eder2019compactness}).
		\item The collection of martingale laws
		$
		\{\Law{(\EE[\Sig{X}|\cF_t])_{t\in \idx}}: \bX \in \tops\}
		$
		is tight in $\cP_1(\Ta{1}{\R^d})$.
	\end{itemize}
\end{enumerate}
\end{theorem}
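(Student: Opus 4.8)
The plan is to reduce both parts to Lemma~\ref{lemma: pushforward keep tightness}, the definition of the extended weak topology, and the identity $F(\hat \bX^1) = (\EE[\Sig{X}|\cF_t])_{t\in\idx}$ recorded just before the statement. For Part 1, I would first unfold the definition: by Definition~\ref{def: higher rank extended weak topology}, convergence in Aldous' extended weak topology $\hat\tau_1$ is exactly weak convergence of the prediction-process laws $\Law{\hat\bX^1}\in\cP_2(\R^d)$. Since $\R^d$ is Polish, each of $\idx\to\R^d$, $\pr(\idx\to\R^d)$, $\idx\to\pr(\idx\to\R^d)$ and hence $\cP_2(\R^d)$ is Polish, so Prohorov's theorem applies and ``$\tops$ is tight in $\hat\tau_1$'' is equivalent to the family $S:=\{\Law{\hat\bX^1}:\bX\in\tops\}\subset\cP_2(\R^d)$ being tight. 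I would then apply Lemma~\ref{lemma: pushforward keep tightness} to $S$, which gives that $S$ is tight if and only if $\{F_\sharp\mu:\mu\in S\}$ is tight in $\cP_1(\Ta{1}{\R^d})$. Finally I would use that $F_\sharp\Law{\hat\bX^1}=\Law{F(\hat\bX^1)}=\Law{(\EE[\Sig{X}|\cF_t])_{t\in\idx}}$, so that $\{F_\sharp\mu:\mu\in S\}$ is precisely the collection of martingale laws in the statement; since $\cP_1(\Ta{1}{\R^d})=\pr(\idx\to\Ta{1}{\R^d})$ by the definition of rank-$1$ measures, this finishes Part 1.

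\textbf{Part 2.} Under natural filtrations the three-way equivalence is an assembly. The coincidence of tightness across Aldous' extended weak topology, Hellwig's information topology and the remaining topologies listed is immediate from Corollary~\ref{corollary:In natural filtration case}: these are literally the same topology (after the usual reduction to a compact state space via tightness), so relative compactness---hence tightness---is the same notion in each. The equivalence with tightness of the martingale laws in $\cP_1(\Ta{1}{\R^d})$ is then Part 1 specialised to the natural filtration. The equivalence with Eder's conditions I would obtain by citing \cite[Theorem 1.4]{eder2019compactness}, which characterises relatively compact sets for the adapted/extended weak topology under the natural filtration; combined with the identification of topologies above, this closes the cycle.

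\textbf{Main obstacle.} The genuine content is already packaged in Lemma~\ref{lemma: pushforward keep tightness}, whose proof handles the only delicate point: that $F$ is a homeomorphism onto its image, so tightness transfers \emph{back} along the pushforward and not merely forward (the latter being automatic from continuity of $F$). Granting that lemma, the remaining care is bookkeeping---matching ``tightness in $\hat\tau_1$'' with tightness of the prediction-process laws via Prohorov on the Polish space $\cP_2(\R^d)$, and correctly identifying $F_\sharp\Law{\hat\bX^1}$ with the martingale laws. For Part 2 the only non-mechanical ingredient is the external input \cite[Theorem 1.4]{eder2019compactness}; beyond aligning its hypotheses with the natural-filtration setting and Corollary~\ref{corollary:In natural filtration case}, no new argument is needed.
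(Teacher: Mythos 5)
Your proposal is correct and follows essentially the same route as the paper: tightness in $\hat\tau_1$ is identified with tightness of the prediction-process laws $\{\Law{\hat\bX^1}\}\subset\cP_2(\R^d)$, Lemma~\ref{lemma: pushforward keep tightness} transfers this both ways along $F_\sharp$, and the identity $F(\hat\bX^1)=(\EE[\Sig{X}|\cF_t])_{t\in\idx}$ identifies the pushforwards with the martingale laws, while Part 2 is the same assembly of Corollary~\ref{corollary:In natural filtration case} and the citation of Eder's theorem that the paper intends. Your explicit Prohorov/Polishness bookkeeping and the remark that the back-transfer of tightness is the only delicate point (resting on $F$ being a homeomorphism onto its image) are exactly the content the paper leaves implicit.
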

\begin{remark}\,
\begin{enumerate}
	\item The assumption that $\vs = \R^d$ cannot be removed because we need the local compactness of $(\idx \to \R^d)$ to ensure Proposition \ref{prop: d0 metrizes weak convergence}. This observation also prevent us from extending above results to higher rank extended weak topologies as, for example, the space $\pr(\idx \to \R^d)$ is not locally compact in general.
	\item
    Theorem~\ref{thm: tightness in extended weak topology} complements Eder's tightness theorem (\cite[Theorem 1.4]{eder2019compactness}).
    In particular, we  highlight that the expected signature map transforms the tightness of laws on a measure space into tightness of martingale laws on a Hilbert space.
    This allows to use tools from martingale theory and the Hilbert space to study the extended weak topology, e.g.~to define the Fourier transform (characteristic functions) for the law of prediction processes.
    Further, it suggests a concrete numerical way to check the tightness in extended weak topology by formulating it in the tensor algebra $\Ta{1}{\R^d}$; 
\end{enumerate}
\end{remark}
\section{Algorithms and Experiments} \label{sec: experiments}
In this Section we apply dynamic programming principles to derive algorithms that efficiently compute $\Phi_r(\bX)$ when the process $\bX$ is a Markov chain.
In the construction of $\Phi_r(\bX)$, the process $\bX$ is lifted to a process that evolves in the algebra $\Ta{r}{V}$ and dynamic programming naturally applies there as the following lemma shows.
\begin{lemma} \label{lem:markov}
	Let $A$ be an algebra and $\bZ \in \APr(A)$ be a Markov chain with finite support. 
	The function
	\begin{align}
	u_t(a) \coloneqq \EE[Z_{t+1}\cdots Z_T|Z_t=a ] 
	\end{align}
	satisfies for every $t=0,1,\ldots,T$ and $a \in A$ the recursion
	\begin{align}
	u_t(a) = \sum_{b \in A} b\PP(Z_{t+1} = b \vert Z_t = a) u_{t+1}(b)
	\end{align}
\end{lemma}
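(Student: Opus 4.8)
The plan is to prove the identity directly by conditioning on the first factor $Z_{t+1}$ and then invoking the tower property together with the Markov property; no induction is needed, since the claim is a one-step recursion relating $u_t$ to $u_{t+1}$. First I would isolate the leftmost factor of the product,
\[
u_t(a) = \EE[Z_{t+1}\,(Z_{t+2}\cdots Z_T)\mid Z_t = a],
\]
and, using that $\bZ$ has finite support, decompose over the finitely many possible values $b$ of $Z_{t+1}$:
\[
u_t(a) = \sum_{b \in A}\EE[Z_{t+1}\,(Z_{t+2}\cdots Z_T)\,\mathbf 1_{\{Z_{t+1}=b\}}\mid Z_t = a].
\]
On the event $\{Z_{t+1}=b\}$ the random element $Z_{t+1}$ equals the constant $b \in A$, so I can substitute it and pull it out of the expectation on the left. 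Here it is essential to keep $b$ on the \emph{left}, since $A$ need not be commutative; left multiplication by a fixed element of $A$ is linear and, because all expectations are finite sums, commutes with $\EE[\,\cdot\mid\cF_t]$.

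Next I would treat the remaining scalar-weighted conditional expectation $\EE[(Z_{t+2}\cdots Z_T)\,\mathbf 1_{\{Z_{t+1}=b\}}\mid Z_t = a]$ via the tower property, conditioning additionally on $Z_{t+1}$. Since $\mathbf 1_{\{Z_{t+1}=b\}}$ is $\sigma(Z_t,Z_{t+1})$-measurable, it factors out of the inner expectation, leaving $\EE[Z_{t+2}\cdots Z_T\mid Z_t,Z_{t+1}]$; the Markov property collapses this to $\EE[Z_{t+2}\cdots Z_T\mid Z_{t+1}]$, which on $\{Z_{t+1}=b\}$ is exactly $u_{t+1}(b)$. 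Taking the outer expectation then returns the scalar factor $\PP(Z_{t+1}=b\mid Z_t=a)$, so each summand becomes $b\,\PP(Z_{t+1}=b\mid Z_t=a)\,u_{t+1}(b)$, and summing over $b$ yields the claimed recursion.

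The main obstacle I anticipate is the bookkeeping of non-commutativity: one must check that the factor $b$ stays to the left of $u_{t+1}(b)$ throughout, with the scalar probability $\PP(Z_{t+1}=b\mid Z_t=a)$ sitting harmlessly in between (scalars are central, so its placement is immaterial). A secondary technical point is to make precise the notion of an $A$-valued conditional expectation; finite support reduces every expectation to a finite $A$-linear combination, which both guarantees well-definedness and justifies interchanging fixed left-multiplications with $\EE$. Finally I would record the boundary convention: for $t=T$ the empty product $Z_{T+1}\cdots Z_T$ is the unit of $A$, so $u_T \equiv \mathbf 1$, and the recursion is understood for $t \le T-1$.
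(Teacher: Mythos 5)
Your proof is correct and follows essentially the same route as the paper's: the paper's one-line proof records the identity $u_t(a) = \EE[Z_{t+1}u_{t+1}(Z_{t+1})\mid Z_t=a]$, obtained exactly by the tower property plus the Markov property, and then expands over the finite support — which is what you do, just with the finite-sum decomposition performed first and the non-commutativity bookkeeping made explicit. Your added care about keeping $b$ to the left and the boundary convention $u_T\equiv\mathbf{1}$ are fine refinements of the same argument, not a different one.
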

\begin{proof}
	This follows immediately from 
	\begin{align}
	u_t(a) = \EE[Z_{t+1} \cdots Z_T \vert Z_t = a] = \EE[Z_{t+1}u_{t+1}(Z_{t+1}) \vert Z_t = a].
	\end{align} 
\end{proof}
If $ \bX \in \APr(V)$ is a Markov chain, then the process $\bZ \in \APr(\Ta{1}{V})$ defined as 
\begin{align}
Z_t \coloneqq \exp({\Delta_t X}) \text{ with } \Delta_t X \coloneqq (X_{t+1}-X_t,1)
\end{align}
where $ \exp : V \to \Ta{1}{V} $ denotes the tensor exponential, is also a Markov chain that takes values in the algebra $ \Ta{1}{V} $. 
Recall that the signature $\Sig{x}$ of a path $ x : I \to \Ban $ is defined as 
\begin{align}
\Sig{x} = \exp({\Delta_0 x}) \exp({\Delta_1 x}) \cdots \exp({\Delta_T x}).\end{align}
Hence, Lemma~\ref{lem:markov} hints at an efficient way to compute $\Phi_0(\bX) \equiv \EE[\Sig{X}] = u_0(X_0) $ since the value function $u$ satisfies the recursion:
\begin{align} \label{eq:req1}
u_t(x)
=\sum_{y \in V}  \exp({ y-x }) \PP(X_{t+1} = y \vert X_t = x)u_{t+1}(y).
\end{align}
Algorithm \ref{alg:1} formulates this in pseudo-code by representing a Markov chain $\bX$ as tree: vertices are labelled by the attainable states $V$ of the Markov chain $\bX$; the process starts at time $t=0$ at a root vertex $r=X_0$; if the Markov chain at time $t$ has value $ a $ then we denote by \emph{a.children} the set of attainable states (vertices) at time $t+1$; the transition probability between two states $ a $ and $ b $ is denoted by $ p(a,b)$. 

\begin{algorithm}[H] \small
	\caption{Pseudo-code for $\Phi_0(\bX) $} \label{alg:1}
	\begin{algorithmic}[1] 
		\State {\bfseries Input:} A Markov chain $\bX$ represented as a rooted tree with root $r$ 
		\Procedure{$\operatorname{ExpSig_0}$}{$a$}
		\If{$ a.children $ {\bfseries is} empty}
		\State {\bfseries return} 1
		\EndIf
		\State sum $ \gets 0 $
		\For{$b$ {\bfseries in} $a.children$}
		\State sum $ \gets $ sum $ + p(a, b) \cdot \exp\big\{ b - a \big\} \cdot \operatorname{ExpSig}_0(b) $
		\EndFor
		\State {\bfseries return} sum
		\EndProcedure
		\State {\bfseries Output:} $\operatorname{ExpSig}_0(r)=\Phi_0(\bX)$
		
	\end{algorithmic}
\end{algorithm}

\begin{algorithm}[H] \small
	\caption{Pseudo-code for $\Phi_1(\bX) $} \label{alg:2}
	\begin{algorithmic}[1] 
		\State {\bfseries Input:} A Markov chain $\bX$ represented as a rooted tree with root $r$, $ s(a) $ denotes the signature of the sample path of $\bX$ that ends at $ a $.
		\Procedure{$\operatorname{ExpSig_1}$}{$a$}
		\State $a_1 \gets 0 $
		\State $a_2 \gets 0 $
		\For{$b$ {\bfseries in} $a.children$}
		\State $ b_1, b_2 \gets \operatorname{ExpSig}_1(b) $
		\State $a_1 \gets a_1 + p(a, b) * \exp\big\{ b - a \big\} * b_1 $
		\EndFor
		\For{$b$ {\bfseries in} $a.children$}
		\State $ b_1, b_2 \gets \operatorname{ExpSig}_1(b) $
		\State $a_2 \gets a_2 + p(a, b) * \exp\big\{ s({b}) * b_1 - s({a}) * a_1\big\} * b_2 $
		\EndFor
		\State {\bfseries return} $ a_1, a_2 $
		\EndProcedure
		\State {\bfseries Output:} $\operatorname{ExpSig}_1(r)=\Phi_1(\bX)$
	\end{algorithmic}
\end{algorithm}

\begin{lemma} \label{lem: complexity}
	Let $\bX \in \APr(U)$ be a Markov chain.
	If the rooted tree that represents $\bX$ has at most $ N+1 $ vertices and depth $ d $ then Algorithm~\ref{alg:1} computes $\Phi_1(\bX)$ with complexity  
	\begin{center}
		$ O\big(tN\big) $ in time and $ O\big(ds\big) $ in space,
	\end{center}
	where $ t,s $ are the time and space costs of computing and storing one call of $ \exp\big(\cdot\big) $ to the desired accuracy.
\end{lemma}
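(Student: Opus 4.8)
The plan is to treat Algorithm~\ref{alg:1}, which computes $\Phi_0(\bX)$, as a depth-first recursion over the rooted tree representing $\bX$, and to analyse it by a direct accounting argument. Correctness is not the content of the lemma: that $\operatorname{ExpSig}_0(r)=u_0(X_0)=\EE[\Sig{X}]=\Phi_0(\bX)$ already follows from the fixed-point recursion~\eqref{eq:req1} together with Lemma~\ref{lem:markov}, since the procedure evaluates exactly that recursion from the leaves upward. What remains is to bound the running time and the peak memory.

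For the time bound, I would first observe that in a tree every non-root vertex has a unique parent, so each vertex is the argument of exactly one recursive call; together with the single initial call on the root, this shows that each of the at most $N+1$ vertices is visited precisely once. At a vertex $a$ the only nontrivial work occurs inside the loop over $a.children$: for each child $b$ the procedure evaluates $\exp\{b-a\}$ once, multiplies by the scalar $p(a,b)$, performs one product and one sum in $\Ta{1}{V}$, and makes one recursive call whose cost is charged to $b$. Charging each loop iteration to the edge $(a,b)$, the total number of iterations over the whole run equals the number of edges, which in a tree with at most $N+1$ vertices is at most $N$. Each iteration performs one evaluation of $\exp(\cdot)$ at cost $t$ together with a bounded number of tensor arithmetic operations of the same order, giving total time $O(tN)$.

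For the space bound, the key point is that the recursion is executed depth-first, so at any instant the call stack contains only the frames along the current root-to-vertex path, of which there are at most $d$ (the depth of the tree). Each active frame stores a bounded number of elements of $\Ta{1}{V}$---the running accumulator \texttt{sum} and the transient value $\exp\{b-a\}\cdot\operatorname{ExpSig}_0(b)$---each of size $O(s)$, while the storage for any fully processed subtree is released once its value has been returned and folded into its parent's accumulator. Hence the peak memory is $O(d)$ tensors of size $O(s)$, that is, $O(ds)$.

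The main points requiring care are the two accounting conventions hidden in the statement. First, the bound $O(tN)$ silently assumes that the truncated-tensor multiplication and addition in $\Ta{1}{V}$ do not dominate a single call to $\exp(\cdot)$; this is the natural convention, since $\exp$ is itself computed through the same truncated tensor operations, so $t$ bounds the per-edge arithmetic up to a constant factor. Second, the bound $O(ds)$ relies on reclaiming the space of a subtree once it has been summed into its parent, so that only one root-to-node path is ever live; I would make this explicit by noting that $\operatorname{ExpSig}_0(b)$ is consumed immediately in the update of \texttt{sum} and need not be retained. Beyond fixing these conventions, the argument is a routine edge-counting and depth-counting estimate, so I expect the only genuine subtlety to be stating the cost model for $t$ and $s$ precisely.
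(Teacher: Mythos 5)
Your proposal is correct and amounts to essentially the same analysis as the paper's: both bound the time by observing that the depth-first recursion visits each vertex (equivalently, processes each edge) exactly once at cost $O(t)$ per edge, and bound the space by noting that only the frames along the current root-to-vertex path are live, each holding $O(s)$ data, with subtree storage released upon return. The only difference is presentational --- you phrase it as a global charging argument over edges and stack depth, while the paper runs a structural induction on subtrees (summing subtree time costs and taking the maximum of subtree space costs) --- and your explicit remarks on the cost-model conventions (that tensor arithmetic is dominated by a call to $\exp$, and that returned values are immediately folded into the parent's accumulator) make precise what the paper leaves implicit.
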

\begin{proof}
	Note that the bounds are clearly true if the tree has a a root with $ N $ children that are all leaves as the recursion will visit each child once and needs to store the return value as well as the execution stack of depth 1.
	Recursively, if the root has $ n $ children, each of which is the root of a sub-tree with $ n_i+1 $ vertices and depth $ d_i $ for $ i=1, \ldots, n $. Then the recursion visits each sub-tree once and adds the results of each sub-tree to the return value. The time and space complexities of the recursive call on the $ i $:th child are $ O(tn_i) $ and $ O(sd_i) $ respectively, hence the total time complexity is 
	\begin{align}
	O(\sum_{i=1}^n tn_i) = O(tN).
	\end{align}
	The space complexity is the maximum amount of space needed for the recursive call, plus the extra space for storing the value at the root and the execution stack, hence the total space complexity is 
	\begin{align}
	O( 1 + s + \max_{1 \leq i \leq n}(sd_i)) = O( 1 + s(d+1)) = O(ds),
	\end{align}
	proving the assertion.
\end{proof}

The computation of $\Phi_r(\bX)$ for $r > 1$ follows along the same lines, but since the notation gets increasingly cumbersome as $ r $ increases we only spell out the case $ r=2 $ in detail; the cases $r \ge 3$ follow analogous. 
We now apply Lemma~\ref{lem:markov} with $A \coloneqq \TaRank{2}(V)$ and $ Z_t = \exp({\Delta_t \bar{\bX}^1})  $ the function $ v^2_t(a) \coloneqq \EE[Z_{t+1}\cdots Z_T|Z_t=a ]  $ satisfies the recursion
\begin{align}
v_t^2(x) = \sum_{y \in V} \PP(X_{t+1} = y \vert X_t = x) \exp \Big\{ \EE[\Sig{X}\vert X_{t+1}=y]-\EE[\Sig{X}\vert X_{t}=x] \Big\} v_{t+1}^2(y).
\end{align}
This recursion is more involved as it requires two evaluations of $ \EE[\Sig{X}\vert X_{t}] $ at every step. However, if we assume that the process $ X $ is nowhere recombining we can rewrite this as the following system 
\begin{align} \label{eq:req2}
v_t^2(x) &= \sum_{y \in V} \PP(X_{t+1} = y \vert X_t = x)  \exp \big\{ \Sig{X \vert X_t = y}v^1_{t+1}(y)-\Sig{X \vert X_{t-1} = x}v^1_{t}(x) \big\} v^2_{t+1}(y) \\
v_t^1(x) &= \sum_{y\in V} \PP(X_{t+1} = y \vert X_t = x)  \exp \big\{ y-x \big\} v_{t+1}^1(y).
\end{align}
where $ \Sig{X \vert X_t = y} $ denotes the signature of the path $ X_0, \ldots, X_t $ such that $ X_t = y $, which is well defined since $ X $ is nowhere recombining by assumption. Note that $ v^1 $ is the same function as the one defined in Equation \ref{eq:req1}. Unfortunately $ v_t^2(x) $ depends on both $ v_t^1 $ and $ v_{t+1}^1 $ and since multiplication in $ \Ta{2}{V} $ is non-commutative there is no way to separate the two dependencies. Because of this $ v_t^1(x) $ needs to be computed before $ v_t^2(x) $ and the recursion is best solved using a Dynamic Programming approach, or by caching the relevant values at every function call. This approach is outlined in Algorithm \ref{alg:2}.

Algorithm \ref{alg:2} is more involved than Algorithm \ref{alg:1} as it requires the computation of $ \Sig{x} $ at every recursive call which has time complexity $ O(dt) $ where $ t $ is the time costs of computing one call of $ \exp_1\big(\big)$ to the desired accuracy, and $ d $ is the depth of $ x $. This can be remedied by memoising the values of $ \Sig{x} $ once computed, which brings the time complexity down to $ O(t) $ but takes up more space.

\begin{lemma}
	Let $ T,S $ be the time and space costs of computing and storing one call of $ \exp : \Ta{1}{V} \to \Ta{2}{V} $ to the desired accuracy, and $ t,s $ be the time and space costs of one call of $ \exp : V \to \Ta{1}{V} $. Assume that the tree has exactly $ N+1 $, depth $ d $ and maximal degree $ M $. Then the function ExpSig$ _2 $ can be implemented to be
	\begin{center}
		$ O\big((t+T)N\big) $ in time and $ O\big(d(MS + s)\big) $ in space.
	\end{center} 
\end{lemma}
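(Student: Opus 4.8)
The plan is to mirror the inductive tree argument from the proof of Lemma~\ref{lem: complexity}, now charging the two distinct exponential evaluations to the edges of the tree separately. First I would pin down the implementation that the bounds refer to: inside a single call of $\operatorname{ExpSig}_2$ at a vertex $a$, recurse into each child \emph{exactly once}, cache the returned pairs $(b_1,b_2)=(v^1(b),v^2(b))$ locally, and only then run the two accumulation loops of~\eqref{eq:req2} over these cached values. This is the decisive point that keeps the two loops of Algorithm~\ref{alg:2} linear rather than doubling the number of recursive calls at every level: with local caching each vertex is visited exactly once by its parent, so there are exactly $N$ recursive calls, one per edge of the tree.

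For the time bound I would charge all work to edges. Processing the edge from $a$ to a child $b$ costs one evaluation of $\exp\{b-a\}\in\Ta{1}{V}$ in the first loop, the incremental update $s(b)=s(a)\cdot\exp\{b-a\}$ of the path signature, and one evaluation of $\exp\{s(b)\,b_1-s(a)\,a_1\}\in\Ta{2}{V}$ in the second loop, together with a bounded number of convolution products absorbed into $t$ and $T$. The first two are calls of $\exp:V\to\Ta{1}{V}$ (cost $O(t)$) and the last is a call of $\exp:\Ta{1}{V}\to\Ta{2}{V}$ (cost $O(T)$), so each of the $N$ edges costs $O(t+T)$. Summing over subtrees exactly as in the recursion $O(\sum_i t n_i)=O(tN)$ of Lemma~\ref{lem: complexity} yields the claimed $O((t+T)N)$ running time; note that $v^1$ here is the same function as in~\eqref{eq:req1}, so its contribution is already accounted for in the $O(t)$ per edge.

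For the space bound I would use that the depth-first recursion keeps at most $d$ frames live at once and bound the storage held in one frame. A frame for $a$ retains the accumulators $a_1\in\Ta{1}{V}$, $a_2\in\Ta{2}{V}$, the cached children pairs $(b_1,b_2)$ that must survive from the first loop to the second, and one slot of the root-to-$a$ path signature. Since $a$ has at most $M$ children, each cached $v^1(b)\in\Ta{1}{V}$ costing $O(s)$ and each cached $v^2(b)\in\Ta{2}{V}$ costing $O(S)$, the children cache is $O(M(s+S))=O(MS)$ per frame (dominating the $O(s+S)$ accumulators), while the path signature contributes $O(s)$ per frame. Summing the children-cache contribution over the at most $d$ live frames gives $O(dMS)$, and summing the path-signature slots along the active root-to-leaf path gives $O(ds)$; together this is the claimed $O(d(MS+s))$. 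Crucially I keep only the $d$ path signatures rather than all $N$ of them, recomputing each child signature $s(b)$ on demand from $s(a)$, which avoids $O(Ns)$ signature storage.

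The main obstacle is precisely this bookkeeping at the junction of the two loops. The update of $a_2$ needs the \emph{completed} accumulator $a_1$ together with the path signatures $s(a),s(b)$, so — as noted after~\eqref{eq:req2} — non-commutativity of $\Ta{2}{V}$ forbids interleaving, which is exactly what the two-loop order enforces. The delicate part of the write-up is to fix the single implementation (one recursion per child, local caching of the children pairs, a path-local signature cache with on-demand child recomputation) for which the $O((t+T)N)$ time bound and the $O(d(MS+s))$ space bound hold \emph{simultaneously}: the two naive alternatives — memoising all signatures globally, which costs $O(Ns)$ space, or recomputing each $s(\cdot)$ from the root, which costs $O(dt)$ per vertex and hence $O(dtN)$ in total — each violate one of the two target bounds.
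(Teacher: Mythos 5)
Your proposal is correct and takes essentially the same route as the paper's proof: both reuse the recursion analysis of Lemma~\ref{lem: complexity}, charge $O(t+T)$ per edge for the two exponential evaluations, cache the children's first-pass values ($O(MS)$ per live frame, $O(dMS)$ in total), and keep the path signatures only along the current root-to-leaf path ($O(ds)$). Your write-up merely makes explicit the implementation details (a single recursive call per child with local caching of the returned pairs, and on-demand recomputation of $s(b)$ from $s(a)$) that the paper's terser argument leaves implicit.
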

\begin{proof}
	The same arguments made in the proof of Lemma \ref{lem: complexity} applies here too. If one caches $ \Sig{X \vert X_t = a} $ at every node one needs to store at most $ d $ values of $ \Sig{\cdot} $ and the cost of computing $ \exp \big\{ \Sig{X \vert X_t = b}v^1_{t+1}(b)-\Sig{X \vert X_{t-1} = a}v^1_{t}(a) \big\} $ is always $ O(T + t) $. By caching values of $ v_{t+1} $ in the first pass the maximum amount of space needed for each pass is $ MS $, since nodes are visited at most once the maximum amount of memory needed is $ dMS $.
\end{proof}

\begin{figure} 
	\begin{center}
		\includegraphics[scale=0.5]{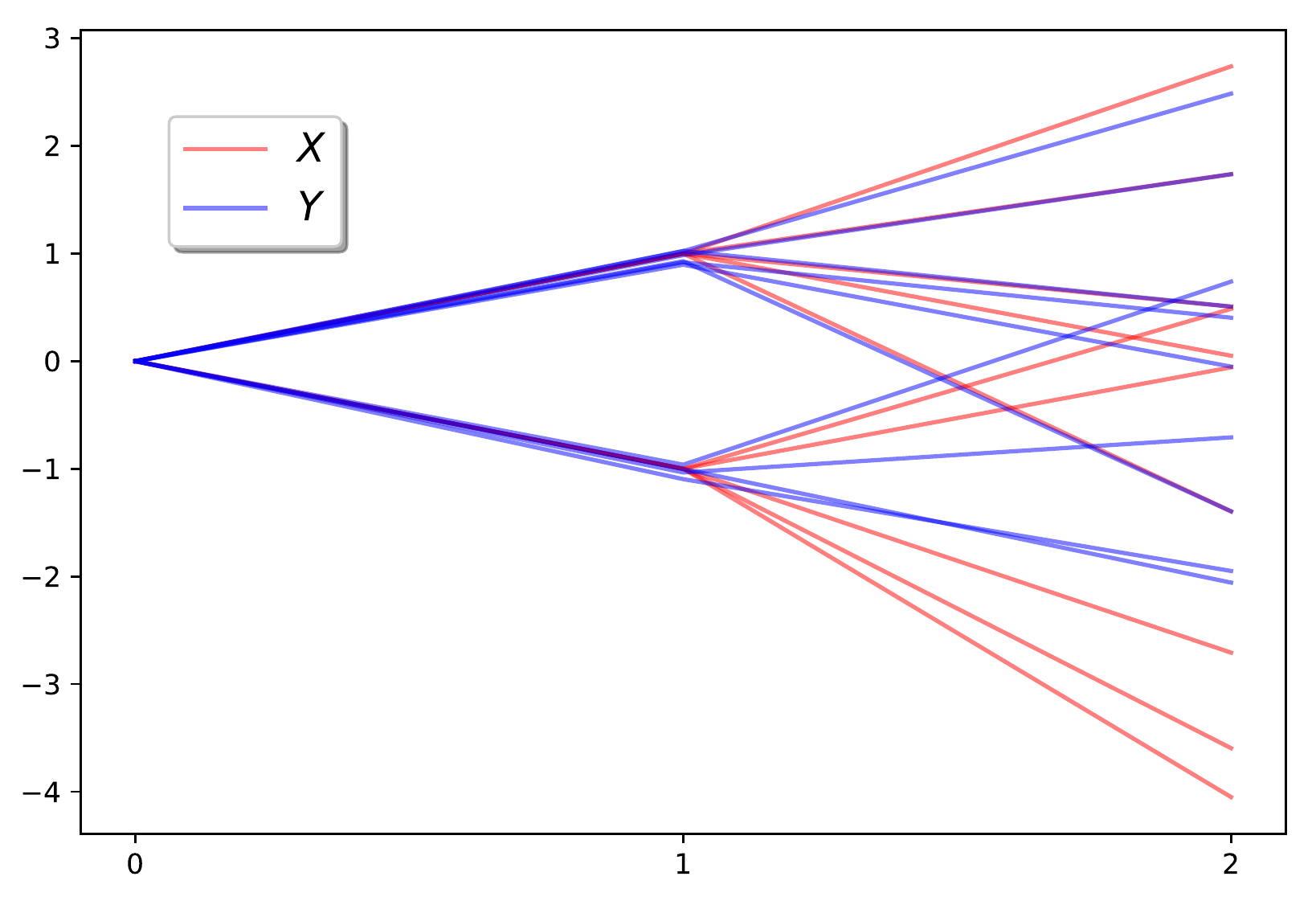}
	\end{center}
	\caption{The sample paths of $ \mu $ are plotted above for $ \varepsilon = 0.05 $. They are marked as either \textbf{X} in red, or \textbf{Y} in blue depending on if the sample comes from $ \bX^c $ or $ \bY^c $.}
	\label{fig:samples}
\end{figure}

\begin{remark}
	We assumed that $ \bX $ is a Markov chain and that $ \bX $ was nowhere recombining, that is that its value at time $ t $ uniquely determines $ X_1, \ldots, X_{t-1} $.
	We echo the usual remark that every process is Markovian by lifting it to a process in larger state space. Concretley, any process $\bX$ can be made to satisfy the assumptions of this section by considering instead the process $ Z_t = (X_1, \ldots, X_t) $.
\end{remark}
\begin{remark}[{Representing higher rank tensor algebras}]
	In order to implement any of the computations outlined above, one first needs to be able to represent the relevant algebras. $ \Ta{1}{V} $ is well known to be a graded algebra over $ V $, but $ \Ta{2}{V} $ has a more complicated multi-grading, and higher dimensional components which make computations trickier. See Appendix \ref{app:tensor algebras} for a more thorough discussion of how to write down the gradings, and the dimensions of the spaces involved, but note that it is always to write down (formal) gradings $ \Ta{r}{V} = \prod_{k\geq 0} \Ta{r}{V}_k $, where if $ V $ has dimension $ d $, then
	\begin{align}
	&\text{dim.}\Ta{1}{V}_k = (d+1)^k, \\
	&\text{dim.}\Ta{2}{V}_0 = 1, \quad \text{dim.}\Ta{2}{V}_1 = d+1, \\
	&\text{dim.}\Ta{2}{V}_k = (2d+3)\text{dim.}\Ta{2}{V}_{k-1} - (d+1)\text{dim.}\Ta{2}{V}_{k-2}.
	\end{align}
\end{remark}
\begin{figure} 
	\begin{center}
		\includegraphics[scale=0.5]{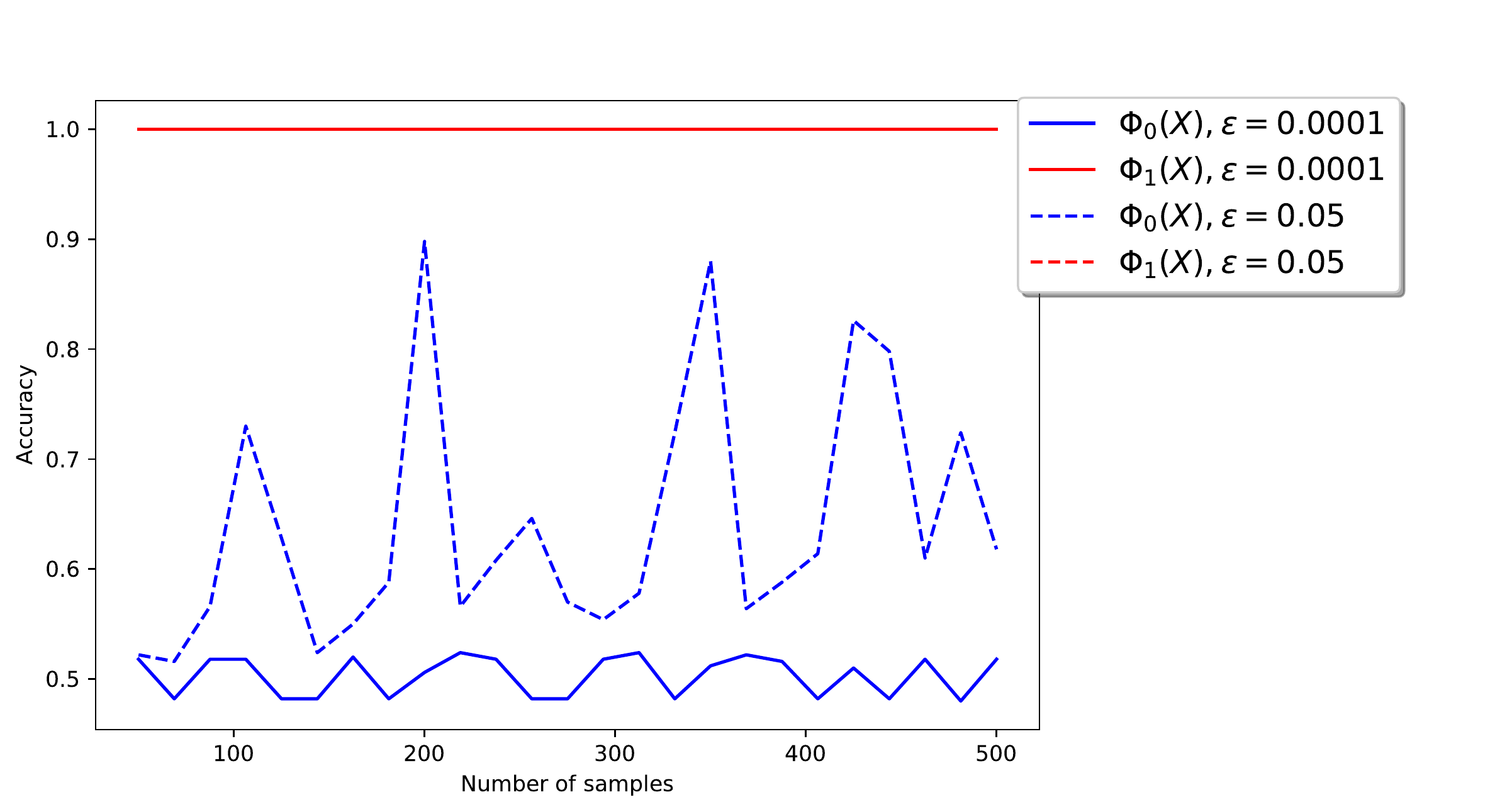}
	\end{center}
	\caption{The accuracies of the linear classifier trained on $ \Phi_0(\bX) $ and $ \Phi_1(\bX) $ is plotted against the number of samples used in blue and red respectively. Solid lines are used for $ \varepsilon = 10^{-4} $ and dashed lines for $ \varepsilon = 5 \times 10^{-2} $.}
	\label{fig:accs}
\end{figure}

\subsection{Experiment: Model Space and Linear Separability}
Expected signatures ($\Phi_0$ in our notation) are currently finding applications in machine learning.
One of their attractive properties is that they provide a hierarchical description of the law of a stochastic process; in the terminology of statistical learning the signature map is a so-called ``universal and characteristic'' feature map for paths, see Appendix~\ref{app: signature, feature map and MMD} .
However, expected signatures metrize weak convergence and hence completely ignore the filtration.

We now use the algorithms from the previous section to demonstrate on a simple numerical toy example that the geometry of the feature space of $\Phi_0$ is too simple in the sense that it fails to separate models with different filtrations. 
In contrast, the feature space of $\Phi_1$ is large enough to allow for a a linear separation.
\begin{example}[Mixtures of Adapted Processes]
	Define for every $c \in \R$ two processes $\bX^c, \bY^c \in \APr$ as 
	\begin{center}
		\begin{tabular}{ llll }
			$\bX^c$ : & $X_0 = 0$,  & $ X_1 = N_1, $ & $ X_2 = c+ N_2$,\\ 
			$\bY^c$ : & $ Y_0 = 0 $, & $ Y_1 = \sqrt{1-\varepsilon^2}M_1 + \varepsilon c , $ & $ Y_2 = c +M_2 $.
		\end{tabular}
	\end{center}
	where $M_1,M_2,N_1,N_2$ are pairwise independent Binomial random variables
	and $ \varepsilon > 0 $ is fixed. $ \bX^c $ and $ \bY^c $ are both equipped with their natural filtrations. Note that for a fixed $c$ and small $\varepsilon$, $\Law{\bX^c} \approx \Law{\bY^c}$, analogously to Example~\ref{ex:ost} resp.~Figure~\ref{fig:example}. 
	$\APr(\R)$ is equipped with a probability measure $\mu$ as follows: a sample from $\mu$ consists of sampling a $C\sim N(0,1)$ and then selecting with probability $0.5$ the process $X^C$ and with probability $0.5$ the process $Y^C$.
	Figure~\ref{fig:samples} shows for each sample from $\mu$ (an adapted process) one sample trajectory from this process. 
\end{example}
We ran the following experiment: We sampled $ 1000 $ processes from $\mu$ and labelled the processes corresponding to whether $\bX^c$ or $\bY^c$ was sampled. We then computed $ \Phi_0 $ and $ \Phi_1 $ for each sample truncated at level $ 6 $ and level $3$ respectively\footnote{This corresponds to 127 coordinates for $\Phi_0$ resp.~76 coordinates for $\Phi_1$, hence is in favour of $\Phi_0$.} and normalized the features. This was then split into a training set and test set -- both of size $ 500 $ -- and for $ 50 \leq m \leq 500$ a Support Vector Machine classifier \cite{svm98} was trained on $ m $ data points from the training set with $\Phi_0$ resp.~$\Phi_1$ as feature map. 

Figure \ref{fig:accs} shows the accuracies of the resulting classifiers on the test set. Observe that for small values of $ \varepsilon $, the classifier on $ \Phi_0 $ is essentially guessing, and even for larger values it does not converge well, the classifier on $ \Phi_1 $ converges immediately however, which is to be expected as $ \Phi_1 $ is able to separate $ \bX^c $ and $ \bY^c $ independently of the value of $ \varepsilon $.

We emphasize that although this is a toy example, it demonstrates how the expected signature can fail to pick up essential properties of a model and that higher rank expected signature provide additional features that linearise complex dependencies between law and filtration.

\begin{ackn}
	PB is supported by the Engineering and Physical Sciences Research Council [EP/R513295/1].
	CL is supported by the SNSF Grant [P2EZP2\_188068].
	HO is supported by the EPSRC grant ``Datasig'' [EP/S026347/1], the Alan Turing Institute, and the Oxford-Man Institute. 
	HO would like to thank Manu Eder for helpful discussions.
\end{ackn}

\small

\bibliography{references} 
\bibliographystyle{alpha}

\newpage
\normalsize
\appendix

\section{Details for Example \ref{ex:HK}} \label{app:A}

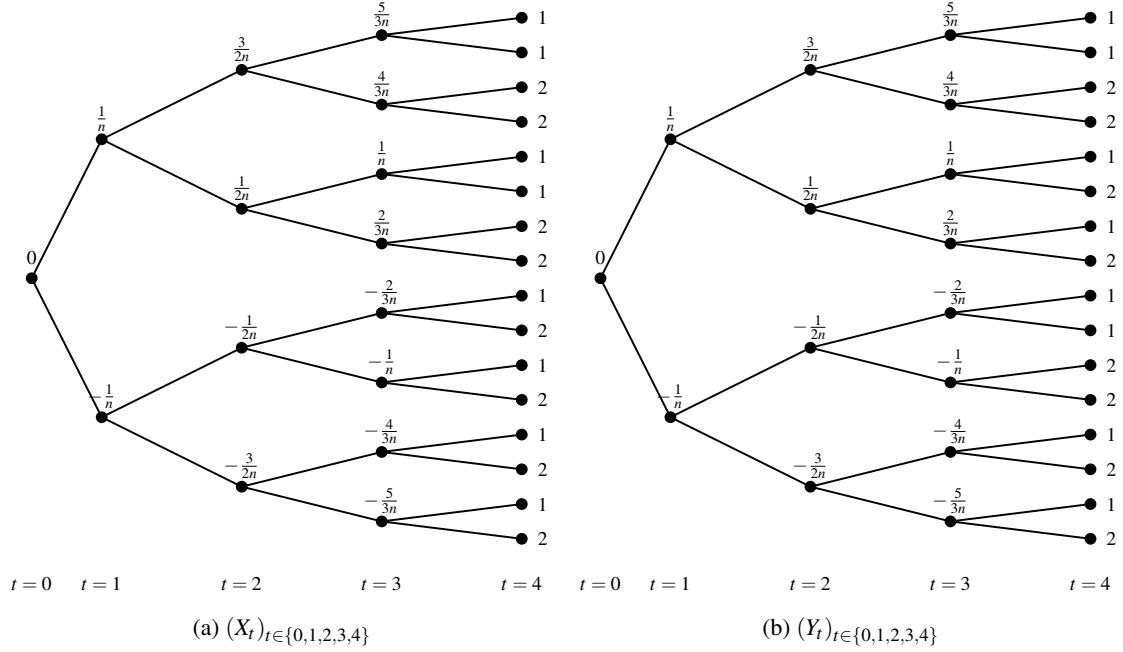
\begin{figure}[t!]
	\begin{subfigure}{.5\textwidth}
		\resizebox{\textwidth}{!}{%
			\begin{tikzpicture}[shorten >=1pt,draw=black!50]
			\node () at (3,-4.4) {\footnotesize$ t=0 $};
			\node () at (4,-4.4) {\footnotesize$ t=1 $};
			\node () at (6,-4.4) {\footnotesize$ t=2 $};
			\node () at (8,-4.4) {\footnotesize$ t=3 $};
			\node () at (10,-4.4) {\footnotesize$ t=4 $};

			\draw[black, fill=black, thick] (3,0) circle (2pt);
			\node () at (3,.3) {\footnotesize$ 0 $};
			
			\draw[black, fill=black, thick] (4,2) circle (2pt);
			\node () at (4,2.3) {\footnotesize$ \frac1n $};
			\draw[black, fill=black, thick] (4,-2) circle (2pt);
			\node () at (4,-1.7) {\footnotesize$ -\frac1n $};
			\draw[black, thick] (3,0) -- (4,2);
			\draw[black, thick] (3,0) -- (4,-2);
			
			\draw[black, fill=black, thick] (6,3) circle (2pt);
			\node () at (6,3.3) {\footnotesize$ \frac3{2n} $};
			\draw[black, fill=black, thick] (6,-3) circle (2pt);
			\node () at (6,-2.7) {\footnotesize$ -\frac3{2n} $};
			\draw[black, thick] (4,2) -- (6,3);
			\draw[black, thick] (4,-2) -- (6,-3);
			
			\draw[black, fill=black, thick] (6,1) circle (2pt);
			\node () at (6,1.3) {\footnotesize$ \frac1{2n} $};
			\draw[black, fill=black, thick] (6,-1) circle (2pt);
			\node () at (6,-0.7) {\footnotesize$ -\frac1{2n} $};
			\draw[black, thick] (4,2) -- (6,1);
			\draw[black, thick] (4,-2) -- (6,-1);
			
			\draw[black, fill=black, thick] (8,3.5) circle (2pt);
			\node () at (8,3.8) {\footnotesize$ \frac5{3n} $};
			\draw[black, fill=black, thick] (8,-3.5) circle (2pt);
			\node () at (8,-3.2) {\footnotesize$ -\frac5{3n} $};
			\draw[black, thick] (6,3) -- (8,3.5);
			\draw[black, thick] (6,-3) -- (8,-3.5);
			
			\draw[black, fill=black, thick] (8,2.5) circle (2pt);
			\node () at (8,2.8) {\footnotesize$ \frac4{3n} $};
			\draw[black, fill=black, thick] (8,-2.5) circle (2pt);
			\node () at (8,-2.2) {\footnotesize$ -\frac4{3n} $};
			\draw[black, thick] (6,3) -- (8,2.5);
			\draw[black, thick] (6,-3) -- (8,-2.5);
			
			\draw[black, fill=black, thick] (8,1.5) circle (2pt);
			\node () at (8,1.8) {\footnotesize$ \frac1{n} $};
			\draw[black, fill=black, thick] (8,-1.5) circle (2pt);
			\node () at (8,-1.2) {\footnotesize$ -\frac1{n} $};
			\draw[black, thick] (6,1) -- (8,1.5);
			\draw[black, thick] (6,-1) -- (8,-1.5);
			
			\draw[black, fill=black, thick] (8,0.5) circle (2pt);
			\node () at (8,0.8) {\footnotesize$ \frac2{3n} $};
			\draw[black, fill=black, thick] (8,-0.5) circle (2pt);
			\node () at (8,-.2) {\footnotesize$ -\frac2{3n} $};
			\draw[black, thick] (6,1) -- (8,0.5);
			\draw[black, thick] (6,-1) -- (8,-0.5);
			
			\draw[black, fill=black, thick] (10,3.75) circle (2pt);
			\node () at (10.3,3.75) {\footnotesize$1$};
			\draw[black, fill=black, thick] (10,3.25) circle (2pt);
			\node () at (10.3,3.25) {\footnotesize$1$};
			\draw[black, thick] (8,3.5) -- (10,3.75);
			\draw[black, thick] (8,3.5) -- (10,3.25);
			
			\draw[black, fill=black, thick] (10,2.75) circle (2pt);
			\node () at (10.3,2.75) {\footnotesize$2$};
			\draw[black, fill=black, thick] (10,2.25) circle (2pt);
			\node () at (10.3,2.25) {\footnotesize$2$};
			\draw[black, thick] (8,2.5) -- (10,2.75);
			\draw[black, thick] (8,2.5) -- (10,2.25);
			
			\draw[black, fill=black, thick] (10,1.75) circle (2pt);
			\node () at (10.3,1.75) {\footnotesize$1$};
			\draw[black, fill=black, thick] (10,1.25) circle (2pt);
			\node () at (10.3,1.25) {\footnotesize$1$};
			\draw[black, thick] (8,1.5) -- (10,1.75);
			\draw[black, thick] (8,1.5) -- (10,1.25);
			
			\draw[black, fill=black, thick] (10,0.75) circle (2pt);
			\node () at (10.3,0.75) {\footnotesize$2$};
			\draw[black, fill=black, thick] (10,0.25) circle (2pt);
			\node () at (10.3,0.25) {\footnotesize$2$};
			\draw[black, thick] (8,0.5) -- (10,0.75);
			\draw[black, thick] (8,0.5) -- (10,0.25);
			
			\draw[black, fill=black, thick] (10,-3.75) circle (2pt);
			\node () at (10.3,-3.75) {\footnotesize$2$};
			\draw[black, fill=black, thick] (10,-3.25) circle (2pt);
			\node () at (10.3,-3.25) {\footnotesize$1$};
			\draw[black, thick] (8,-3.5) -- (10,-3.75);
			\draw[black, thick] (8,-3.5) -- (10,-3.25);
			
			\draw[black, fill=black, thick] (10,-2.75) circle (2pt);
			\node () at (10.3,-2.75) {\footnotesize$2$};
			\draw[black, fill=black, thick] (10,-2.25) circle (2pt);
			\node () at (10.3,-2.25) {\footnotesize$1$};
			\draw[black, thick] (8,-2.5) -- (10,-2.75);
			\draw[black, thick] (8,-2.5) -- (10,-2.25);
			
			\draw[black, fill=black, thick] (10,-1.75) circle (2pt);
			\node () at (10.3,-1.75) {\footnotesize$2$};
			\draw[black, fill=black, thick] (10,-1.25) circle (2pt);
			\node () at (10.3,-1.25) {\footnotesize$1$};
			\draw[black, thick] (8,-1.5) -- (10,-1.75);
			\draw[black, thick] (8,-1.5) -- (10,-1.25);
			
			\draw[black, fill=black, thick] (10,-0.75) circle (2pt);
			\node () at (10.3,-0.75) {\footnotesize$2$};
			\draw[black, fill=black, thick] (10,-0.25) circle (2pt);
			\node () at (10.3,-0.25) {\footnotesize$1$};
			\draw[black, thick] (8,-0.5) -- (10,-0.75);
			\draw[black, thick] (8,-0.5) -- (10,-0.25);
			\end{tikzpicture}
		}
		\caption{$ (X_t)_{t\in \{0, 1, 2, 3, 4\}} $}
	\end{subfigure}%
	~
	\begin{subfigure}{.5\textwidth}
		\resizebox{\textwidth}{!}{%
			\begin{tikzpicture}[shorten >=1pt,draw=black!50]
			\node () at (3,-4.4) {\footnotesize$ t=0 $};
			\node () at (4,-4.4) {\footnotesize$ t=1 $};
			\node () at (6,-4.4) {\footnotesize$ t=2 $};
			\node () at (8,-4.4) {\footnotesize$ t=3 $};
			\node () at (10,-4.4) {\footnotesize$ t=4 $};
			
			\draw[black, fill=black, thick] (3,0) circle (2pt);
			\node () at (3,.3) {\footnotesize$ 0 $};
			
			\draw[black, fill=black, thick] (4,2) circle (2pt);
			\node () at (4,2.3) {\footnotesize$ \frac1n $};
			\draw[black, fill=black, thick] (4,-2) circle (2pt);
			\node () at (4,-1.7) {\footnotesize$ -\frac1n $};
			\draw[black, thick] (3,0) -- (4,2);
			\draw[black, thick] (3,0) -- (4,-2);
			
			\draw[black, fill=black, thick] (6,3) circle (2pt);
			\node () at (6,3.3) {\footnotesize$ \frac3{2n} $};
			\draw[black, fill=black, thick] (6,-3) circle (2pt);
			\node () at (6,-2.7) {\footnotesize$ -\frac3{2n} $};
			\draw[black, thick] (4,2) -- (6,3);
			\draw[black, thick] (4,-2) -- (6,-3);
			
			\draw[black, fill=black, thick] (6,1) circle (2pt);
			\node () at (6,1.3) {\footnotesize$ \frac1{2n} $};
			\draw[black, fill=black, thick] (6,-1) circle (2pt);
			\node () at (6,-0.7) {\footnotesize$ -\frac1{2n} $};
			\draw[black, thick] (4,2) -- (6,1);
			\draw[black, thick] (4,-2) -- (6,-1);
			
			\draw[black, fill=black, thick] (8,3.5) circle (2pt);
			\node () at (8,3.8) {\footnotesize$ \frac5{3n} $};
			\draw[black, fill=black, thick] (8,-3.5) circle (2pt);
			\node () at (8,-3.2) {\footnotesize$ -\frac5{3n} $};
			\draw[black, thick] (6,3) -- (8,3.5);
			\draw[black, thick] (6,-3) -- (8,-3.5);
			
			\draw[black, fill=black, thick] (8,2.5) circle (2pt);
			\node () at (8,2.8) {\footnotesize$ \frac4{3n} $};
			\draw[black, fill=black, thick] (8,-2.5) circle (2pt);
			\node () at (8,-2.2) {\footnotesize$ -\frac4{3n} $};
			\draw[black, thick] (6,3) -- (8,2.5);
			\draw[black, thick] (6,-3) -- (8,-2.5);
			
			\draw[black, fill=black, thick] (8,1.5) circle (2pt);
			\node () at (8,1.8) {\footnotesize$ \frac1n $};
			\draw[black, fill=black, thick] (8,-1.5) circle (2pt);
			\node () at (8,-1.2) {\footnotesize$ -\frac1n $};
			\draw[black, thick] (6,1) -- (8,1.5);
			\draw[black, thick] (6,-1) -- (8,-1.5);
			
			\draw[black, fill=black, thick] (8,0.5) circle (2pt);
			\node () at (8,0.8) {\footnotesize$ \frac2{3n} $};
			\draw[black, fill=black, thick] (8,-0.5) circle (2pt);
			\node () at (8,-.2) {\footnotesize$ -\frac2{3n} $};
			\draw[black, thick] (6,1) -- (8,0.5);
			\draw[black, thick] (6,-1) -- (8,-0.5);
			
			\draw[black, fill=black, thick] (10,3.75) circle (2pt);
			\node () at (10.3,3.75) {\footnotesize$1$};
			\draw[black, fill=black, thick] (10,3.25) circle (2pt);
			\node () at (10.3,3.25) {\footnotesize$1$};
			\draw[black, thick] (8,3.5) -- (10,3.75);
			\draw[black, thick] (8,3.5) -- (10,3.25);
			
			\draw[black, fill=black, thick] (10,2.75) circle (2pt);
			\node () at (10.3,2.75) {\footnotesize$2$};
			\draw[black, fill=black, thick] (10,2.25) circle (2pt);
			\node () at (10.3,2.25) {\footnotesize$2$};
			\draw[black, thick] (8,2.5) -- (10,2.75);
			\draw[black, thick] (8,2.5) -- (10,2.25);
			
			\draw[black, fill=black, thick] (10,1.75) circle (2pt);
			\node () at (10.3,1.75) {\footnotesize$1$};
			\draw[black, fill=black, thick] (10,1.25) circle (2pt);
			\node () at (10.3,1.25) {\footnotesize$2$};
			\draw[black, thick] (8,1.5) -- (10,1.75);
			\draw[black, thick] (8,1.5) -- (10,1.25);
			
			\draw[black, fill=black, thick] (10,0.75) circle (2pt);
			\node () at (10.3,0.75) {\footnotesize$1$};
			\draw[black, fill=black, thick] (10,0.25) circle (2pt);
			\node () at (10.3,0.25) {\footnotesize$2$};
			\draw[black, thick] (8,0.5) -- (10,0.75);
			\draw[black, thick] (8,0.5) -- (10,0.25);
			
			\draw[black, fill=black, thick] (10,-3.75) circle (2pt);
			\node () at (10.3,-3.75) {\footnotesize$2$};
			\draw[black, fill=black, thick] (10,-3.25) circle (2pt);
			\node () at (10.3,-3.25) {\footnotesize$1$};
			\draw[black, thick] (8,-3.5) -- (10,-3.75);
			\draw[black, thick] (8,-3.5) -- (10,-3.25);
			
			\draw[black, fill=black, thick] (10,-2.75) circle (2pt);
			\node () at (10.3,-2.75) {\footnotesize$2$};
			\draw[black, fill=black, thick] (10,-2.25) circle (2pt);
			\node () at (10.3,-2.25) {\footnotesize$1$};
			\draw[black, thick] (8,-2.5) -- (10,-2.75);
			\draw[black, thick] (8,-2.5) -- (10,-2.25);
			
			\draw[black, fill=black, thick] (10,-1.75) circle (2pt);
			\node () at (10.3,-1.75) {\footnotesize$2$};
			\draw[black, fill=black, thick] (10,-1.25) circle (2pt);
			\node () at (10.3,-1.25) {\footnotesize$2$};
			\draw[black, thick] (8,-1.5) -- (10,-1.75);
			\draw[black, thick] (8,-1.5) -- (10,-1.25);
			
			\draw[black, fill=black, thick] (10,-0.75) circle (2pt);
			\node () at (10.3,-0.75) {\footnotesize$1$};
			\draw[black, fill=black, thick] (10,-0.25) circle (2pt);
			\node () at (10.3,-0.25) {\footnotesize$1$};
			\draw[black, thick] (8,-0.5) -- (10,-0.75);
			\draw[black, thick] (8,-0.5) -- (10,-0.25);
			\end{tikzpicture}
		}
		\caption{$ (Y_t)_{t\in \{0, 1, 2, 3, 4\}} $}
	\end{subfigure}%
	\caption{Two processes $ X $ and $ Y $ that converge weakly to the same process and such that the difference between their prediction processes converges weakly to zero, but does not converge in the rank $ 2 $ adapted topology. Before $ t=4 $ they move very little, with steps of order $ 1/n $ and at $ t=4 $ they jump to either $ 1 $ or $ 2 $ with equal probability. As $ n \to \infty $ they both converge weakly to the process that stay at $ 0 $ until $ t=4 $ when it jumps to either $ 1 $ or $ 2 $.}
	\label{fig:hk1}
\end{figure}

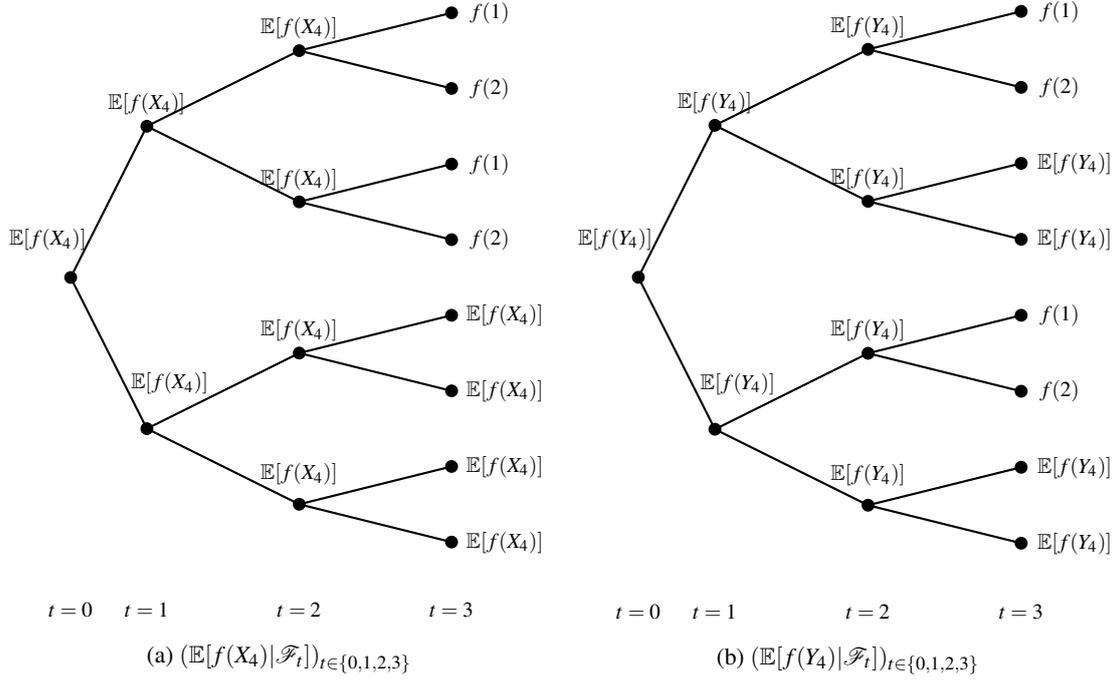
\begin{figure}
	\begin{subfigure}{.5\textwidth}
		\resizebox{\textwidth}{!}{%
			\begin{tikzpicture}[shorten >=1pt,draw=black!50]
			\node () at (3,-4.4) {\footnotesize$ t=0 $};
			\node () at (4,-4.4) {\footnotesize$ t=1 $};
			\node () at (6,-4.4) {\footnotesize$ t=2 $};
			\node () at (8,-4.4) {\footnotesize$ t=3 $};			
			
			\draw[black, fill=black, thick] (3,0) circle (2pt);
			\node () at (2.7,.5) {\footnotesize$ \EE[f(X_4)] $};
			
			\draw[black, fill=black, thick] (4,2) circle (2pt);
			\node () at (4,2.3) {\footnotesize$ \EE[f(X_4)] $};
			\draw[black, fill=black, thick] (4,-2) circle (2pt);
			\node () at (4.3,-1.4) {\footnotesize$ \EE[f(X_4)] $};
			\draw[black, thick] (3,0) -- (4,2);
			\draw[black, thick] (3,0) -- (4,-2);
			
			\draw[black, fill=black, thick] (6,3) circle (2pt);
			\node () at (6,3.3) {\footnotesize$ \EE[f(X_4)] $};
			\draw[black, fill=black, thick] (6,-3) circle (2pt);
			\node () at (6,-2.6) {\footnotesize$ \EE[f(X_4)] $};
			\draw[black, thick] (4,2) -- (6,3);
			\draw[black, thick] (4,-2) -- (6,-3);
			
			\draw[black, fill=black, thick] (6,1) circle (2pt);
			\node () at (6,1.3) {\footnotesize$ \EE[f(X_4)] $};
			\draw[black, fill=black, thick] (6,-1) circle (2pt);
			\node () at (6,-0.7) {\footnotesize$ \EE[f(X_4)] $};
			\draw[black, thick] (4,2) -- (6,1);
			\draw[black, thick] (4,-2) -- (6,-1);
			
			\draw[black, fill=black, thick] (8,3.5) circle (2pt);
			\node () at (8.5,3.5) {\footnotesize$ f(1) $};
			\draw[black, fill=black, thick] (8,-3.5) circle (2pt);
			\node () at (8.5,2.5) {\footnotesize$ f(2) $};
			\draw[black, thick] (6,3) -- (8,3.5);
			\draw[black, thick] (6,-3) -- (8,-3.5);
			
			\draw[black, fill=black, thick] (8,2.5) circle (2pt);
			\node () at (8.5,1.5) {\footnotesize$ f(1) $};
			\draw[black, fill=black, thick] (8,-2.5) circle (2pt);
			\node () at (8.5,0.5) {\footnotesize$ f(2) $};
			\draw[black, thick] (6,3) -- (8,2.5);
			\draw[black, thick] (6,-3) -- (8,-2.5);
			
			\draw[black, fill=black, thick] (8,1.5) circle (2pt);
			\node () at (8.7,-0.5) {\footnotesize$ \EE[f(X_4)] $};
			\draw[black, fill=black, thick] (8,-1.5) circle (2pt);
			\node () at (8.7,-1.5) {\footnotesize$ \EE[f(X_4)] $};
			\draw[black, thick] (6,1) -- (8,1.5);
			\draw[black, thick] (6,-1) -- (8,-1.5);
			
			\draw[black, fill=black, thick] (8,0.5) circle (2pt);
			\node () at (8.7,-2.5) {\footnotesize$ \EE[f(X_4)] $};
			\draw[black, fill=black, thick] (8,-0.5) circle (2pt);
			\node () at (8.7,-3.5) {\footnotesize$ \EE[f(X_4)] $};
			\draw[black, thick] (6,1) -- (8,0.5);
			\draw[black, thick] (6,-1) -- (8,-0.5);
			\end{tikzpicture}
		}
		\caption{$ (\EE[f(X_4)\vert \cF_t])_{t\in \{0, 1, 2, 3\} } $}
	\end{subfigure}%
	~
	\begin{subfigure}{.5\textwidth}
		\resizebox{\textwidth}{!}{%
			\begin{tikzpicture}[shorten >=1pt,draw=black!50]
			\node () at (3,-4.4) {\footnotesize$ t=0 $};
			\node () at (4,-4.4) {\footnotesize$ t=1 $};
			\node () at (6,-4.4) {\footnotesize$ t=2 $};
			\node () at (8,-4.4) {\footnotesize$ t=3 $};			
			
			\draw[black, fill=black, thick] (3,0) circle (2pt);
			\node () at (2.7,.5) {\footnotesize$ \EE[f(Y_4)] $};
			
			\draw[black, fill=black, thick] (4,2) circle (2pt);
			\node () at (4,2.3) {\footnotesize$ \EE[f(Y_4)] $};
			\draw[black, fill=black, thick] (4,-2) circle (2pt);
			\node () at (4.3,-1.4) {\footnotesize$ \EE[f(Y_4)] $};
			\draw[black, thick] (3,0) -- (4,2);
			\draw[black, thick] (3,0) -- (4,-2);
			
			\draw[black, fill=black, thick] (6,3) circle (2pt);
			\node () at (6,3.3) {\footnotesize$ \EE[f(Y_4)] $};
			\draw[black, fill=black, thick] (6,-3) circle (2pt);
			\node () at (6,-2.6) {\footnotesize$ \EE[f(Y_4)] $};
			\draw[black, thick] (4,2) -- (6,3);
			\draw[black, thick] (4,-2) -- (6,-3);
			
			\draw[black, fill=black, thick] (6,1) circle (2pt);
			\node () at (6,1.3) {\footnotesize$ \EE[f(Y_4)] $};
			\draw[black, fill=black, thick] (6,-1) circle (2pt);
			\node () at (6,-0.7) {\footnotesize$ \EE[f(Y_4)] $};
			\draw[black, thick] (4,2) -- (6,1);
			\draw[black, thick] (4,-2) -- (6,-1);
			
			\draw[black, fill=black, thick] (8,3.5) circle (2pt);
			\node () at (8.5,3.5) {\footnotesize$ f(1) $};
			\draw[black, fill=black, thick] (8,-3.5) circle (2pt);
			\node () at (8.5,2.5) {\footnotesize$ f(2) $};
			\draw[black, thick] (6,3) -- (8,3.5);
			\draw[black, thick] (6,-3) -- (8,-3.5);
			
			\draw[black, fill=black, thick] (8,2.5) circle (2pt);
			\node () at (8.7,1.5) {\footnotesize$ \EE[f(Y_4)] $};
			\draw[black, fill=black, thick] (8,-2.5) circle (2pt);
			\node () at (8.7,0.5) {\footnotesize$ \EE[f(Y_4)] $};
			\draw[black, thick] (6,3) -- (8,2.5);
			\draw[black, thick] (6,-3) -- (8,-2.5);
			
			\draw[black, fill=black, thick] (8,1.5) circle (2pt);
			\node () at (8.5,-0.5) {\footnotesize$ f(1) $};
			\draw[black, fill=black, thick] (8,-1.5) circle (2pt);
			\node () at (8.5,-1.5) {\footnotesize$ f(2) $};
			\draw[black, thick] (6,1) -- (8,1.5);
			\draw[black, thick] (6,-1) -- (8,-1.5);
			
			\draw[black, fill=black, thick] (8,0.5) circle (2pt);
			\node () at (8.7,-2.5) {\footnotesize$ \EE[f(Y_4)] $};
			\draw[black, fill=black, thick] (8,-0.5) circle (2pt);
			\node () at (8.7,-3.5) {\footnotesize$ \EE[f(Y_4)] $};
			\draw[black, thick] (6,1) -- (8,0.5);
			\draw[black, thick] (6,-1) -- (8,-0.5);
			\end{tikzpicture}
		}
		\caption{$ (\EE[f(Y_4)\vert \cF_t])_{t\in \{0, 1, 2, 3\} } $}
	\end{subfigure}
	\caption{For any fixed $ f \in C_b(\R) $, the processes $ t \mapsto \EE[f(X_4)\vert \cF_t] $ and $ t \mapsto \EE[f(Y_4)\vert \cF_t] $ have the same distribution, so $ \hat X - \hat Y $ goes to $ 0 $ as $ n \to \infty $. The rank $ 2 $ prediction processes are not the same however}
	\label{fig:hk2}
\end{figure}

	Consider the Probability space $ \Omega = \{ 1, \ldots, 16 \} $ equipped with the counting measure and the filtration 
	\begin{align}
	\cF_0 &= \{ \Omega, \varnothing \}, \\
	\cF_1 &= \sigma\langle \{ 1, \ldots, 8 \}, \{ 9, \ldots, 16 \} \rangle, \\ 
	\cF_2 &= \sigma\langle \{ 1, 2, 3, 4 \}, \{ 5, 6, 7, 8 \}, \{ 9, 10, 11, 12 \}, \{ 13, 14, 15, 16 \} \rangle, \\ 
	\cF_3 &= \sigma\langle \{ 1, 2 \}, \{ 3, 4 \}, \{ 5, 6 \}, \{ 7, 8 \}, \{ 9, 10 \}, \{ 11, 12 \} \{ 13, 14 \}, \{ 15, 16 \} \rangle, \\
	\cF_4 &= 2^\Omega.
	\end{align}
	Define the two processes 
	\begin{align}
	X_0 = X_1 = X_2 = X_3 = 0, 
	X_4 : \begin{cases}
	1,2,5,6,9,11,13,15 \mapsto 1 \\
	3,4,7,8,10,12,14,16 \mapsto 2,
	\end{cases} \\
	Y_0 = Y_1 = Y_2 = Y_3 = 0, 
	Y_4 : \begin{cases}
	1,2,5,7,9,10,13,15 \mapsto 1 \\
	3,4,6,8,11,12,14,16 \mapsto 2,
	\end{cases}.
	\end{align}
	If the above construction looks unnatural the reader is also invited to think of the filtration as being the natural filtration associated to the processes and that instead of staying at $ 0 $ until time $ 4 $, they move with step size of order $ 1/n $ in such a way to generate $ \cF $, as in Figure \ref{fig:hk1} and \ref{fig:hk2}.
	Clearly the image measure of $ X $ and $ Y $ are the same, so $ \EE f(X) = \EE f(Y) $ for any $ f \in \R^{\{0,1,2\}} $. Moreover:
	\begin{align}
	&\EE[f(X_4)\vert \cF_0] = \EE[f(X_4)\vert \cF_1] = \EE[f(X_4)\vert \cF_2] = \frac12\big(f(1)+f(2)\big), \\
	&\EE[f(X_4)\vert \cF_3] : \begin{cases}
	\{1,2\},\{5,6\} \mapsto f(1) \\
	\{3,4\},\{7,8\} \mapsto f(2), \\
	\{9, 10\}, \{11, 12\}, \{13, 14\}, \{15, 16\} \mapsto \frac12\big(f(1)+f(2)\big),
	\end{cases} \\
	&\EE[f(Y_4)\vert \cF_0] = \EE[f(Y_4)\vert \cF_1] = \EE[f(Y_4)\vert \cF_2] = \frac12\big(f(1)+f(2)\big), \\
	&\EE[f(Y_4)\vert \cF_3] : \begin{cases}
	\{1,2\},\{9,10\} \mapsto f(1) \\
	\{3,4\},\{11,12\} \mapsto f(2), \\
	\{5, 6\}, \{7, 8\}, \{13, 14\}, \{15, 16\} \mapsto \frac12\big(f(1)+f(2)\big),
	\end{cases} 
	\end{align}
	since the image measure of the above processes are the same, $ \EE[g(\EE[f(X)\vert \cF])] = \EE[g(\EE[f(Y)\vert \cF])] $ for any $ f,g \in \R^{\{0,1,2\}} $ and therefore they have the same prediction process. However, it can bee seen that
	\begin{align}
	&\EE[\EE[X_4\vert \cF_3]^2\vert \cF_1] : \begin{cases}
	\{1,2,3,4,5,6,7,8\} \mapsto \frac52 \\
	\{9,10,11,12,13,14,15,16\} \mapsto \frac94,
	\end{cases} \\
	&\EE[\EE[Y_4\vert \cF_3]^2\vert \cF_1] = \frac{19}8.
	\end{align}
	Hence the information structure in these processes are different, but this can't be seen by their prediction processes alone.

\section{Higher rank tensor algebras and their norms}\label{app:tensor algebras}
If $\vs$ is a Banach space with norm $ \lVert \cdot \rVert $, then we want to equip $\vs^{\otimes m}$ with a norm for every $m\ge 1$. In the general case some care is needed and we assume that all norms on tensor products are \emph{admissible} as defined below.
\begin{definition}\label{def: admissible norm}
	We say that $\| \cdot \|$ is an admissible norm on $(\vs^{\otimes m})_{m\geq 1}$, if: 
	\begin{enumerate}
		\item For any permutation $ \sigma : \{ 1, \ldots, n \} \to \{ 1, \ldots, n \} $
		\begin{align}
		\lVert v_1 \otimes \cdots \otimes v_n \rVert = \lVert v_{\sigma(1)} \otimes \cdots \otimes v_{\sigma(n)} \rVert.
		\end{align}
		\item For $ v \in V^{\otimes n}, w \in V^{\otimes m} $ it holds that
		\begin{align}
		\lVert v \otimes w \rVert \leq \lVert v \rVert \cdot \lVert w \rVert.
		\end{align}
	\end{enumerate}
	Both projective, and injective norms are admissible. See \cite{Ryan02} for more details.
\end{definition}
Recall that if $ V $ is a vector space, then $ \mathbf{ut}(V) := \bigoplus_{m\geq 0} \vs^{\otimes m} $ is the tensor algebra over $ V $, and $ \mathbf{ut}(V)-1 := \bigoplus_{m\geq 1} \vs^{\otimes m} $ is the non-unital tensor algebra over $ V $. The higher order tensor algebras are defined inductively as follows:
\begin{definition}
	Let $ \vs $ be a normed space. Define the spaces
	\begin{align}
	\ta{0}{\vs} &= \vs, \quad 
	\ta{r}{\vs} = \bigoplus_{m\geq 0} \big( \ta{r-1}{\vs} -1 \big)^{\otimes m}, \quad &r\geq 1. \\
	\ata{0}{\vs} &= \vs, \quad 
	\ata{r}{\vs} = \bigoplus_{m\geq 0} \big( \R\oplus \ata{r-1}{\vs} - 1\big)^{\otimes m}, \quad &r\geq 1.
	\end{align}
\end{definition}
A module $ A $ is said to be multi-graded if there is a monoid $ M $ such that
\begin{align}
A = \bigoplus_{m\in M} A_m
\end{align}
and $ A_mA_n \subseteq A_{mn} $. In order to describe the multi-grading of $ \ta{r}{\vs} $ and $ \ata{r}{\vs} $ we will use the following lemma. We use the notation $ \cF\big[\cdot \big] $ for the free algebra generated by $ \cdot $ and $ \cM\big[\cdot \big] $ for the free monoid generated by $ \cdot $. The following follows from the definitions of $ \cF $ and $ \cM $ and is recorded here as a lemma.
\begin{lemma} \label{lem:MG}
	Let $ A_1, \ldots, A_n $ be multi-graded modules with respective multi-gradings $ M_1, \ldots, M_n $. Then $ \cF\big[ A_1, \ldots, A_n \big] $ is multi-graded by $ \cM\big[M_1, \ldots, M_n \big] $.
\end{lemma}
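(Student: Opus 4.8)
The plan is to unwind both free constructions explicitly and observe that their indexing sets and products coincide on the nose; there is no analytic content here, only careful bookkeeping, which is presumably why the authors record it as an immediate consequence of the definitions. First I would recall that $\cF[A_1,\ldots,A_n]$ is the free (unital, associative) algebra on the direct sum $A_1 \oplus \cdots \oplus A_n$, i.e. the tensor algebra $T(A_1 \oplus \cdots \oplus A_n)$, so that as a module
\[
\cF[A_1,\ldots,A_n] = \bigoplus_{k \geq 0}\ \bigoplus_{(i_1,\ldots,i_k)} A_{i_1} \otimes \cdots \otimes A_{i_k},
\]
where $(i_1,\ldots,i_k)$ ranges over all finite sequences in $\{1,\ldots,n\}$, the empty sequence $k=0$ contributing the ground field, and the product of $\cF$ is concatenation of these tensor words.

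Next I would refine this decomposition using the gradings of the generators. Writing $A_i = \bigoplus_{m \in M_i} (A_i)_m$, each multi-homogeneous piece splits further as
\[
A_{i_1} \otimes \cdots \otimes A_{i_k} = \bigoplus_{m_1 \in M_{i_1},\, \ldots,\, m_k \in M_{i_k}} (A_{i_1})_{m_1} \otimes \cdots \otimes (A_{i_k})_{m_k}.
\]
The index data of such a summand is precisely a finite word whose $j$-th letter is the pair $(i_j,m_j)$ with $m_j \in M_{i_j}$, that is, a word in the alphabet $M_1 \sqcup \cdots \sqcup M_n$. By definition these words are exactly the elements of the free monoid $\cM[M_1,\ldots,M_n]$, with operation given by concatenation and unit the empty word. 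I would therefore set, for $w = ((i_1,m_1),\ldots,(i_k,m_k)) \in \cM[M_1,\ldots,M_n]$, the homogeneous component $\cF[A_1,\ldots,A_n]_w := (A_{i_1})_{m_1} \otimes \cdots \otimes (A_{i_k})_{m_k}$, yielding the claimed direct-sum decomposition indexed by $\cM[M_1,\ldots,M_n]$.

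Finally I would verify compatibility with the algebra structure. On pure tensors the product of $\cF$ carries the component of degree $w$ and the component of degree $w'$ into the component of degree $ww'$, because concatenating the tensor words representing $w$ and $w'$ produces a tensor whose letter data is the concatenated word $ww'$; this is exactly the condition $\cF_w \cF_{w'} \subseteq \cF_{ww'}$ in the definition of a multi-grading, and the empty word matches the ground-field summand as the unit. Hence $\cF[A_1,\ldots,A_n]$ is multi-graded by $\cM[M_1,\ldots,M_n]$.

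I do not expect a genuine obstacle. The only point requiring care is the identification in the second step of the summation index with the free monoid, i.e.\ matching concatenation of tensor words with the monoid operation of $\cM$ and checking that the empty word corresponds to the degree-zero (scalar) part. I would also flag explicitly that the internal algebra multiplications of the $A_i$ never enter, since $\cF$ sees only the underlying graded modules; this is the reason the grading monoid is the free monoid on the \emph{disjoint union} of the $M_i$ (all words, with no collapsing of adjacent letters from the same $M_i$) rather than, say, their coproduct in the category of monoids. A sanity check on $n=1$ confirms the reading: $\cF[A_1] = T(A_1)$ is graded by words in $M_1$, which is strictly larger than $M_1$ itself.
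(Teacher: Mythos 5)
Your proof is correct and is precisely the definitional unwinding the paper itself invokes: the paper records this lemma without proof, stating only that it ``follows from the definitions of $\cF$ and $\cM$,'' and your bookkeeping (tensor algebra of the direct sum, refinement by the gradings of the generators, concatenation of words matching concatenation of tensors) is that argument spelled out. Your flagged reading of $\cM[M_1,\ldots,M_n]$ as the free monoid of words on $M_1 \sqcup \cdots \sqcup M_n$, with no collapsing of adjacent letters, is the one the paper's subsequent usage confirms, e.g.\ the grading of $\ta{2}{\vs}$ by sequences in $\N$ rather than by $\N$ itself.
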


Using the notation $ \otimes_{(r)} $ for the tensor product on $ \ta{r-1}{\vs} $, we note that by the above lemma $ \big(\ta{r}{\vs}, +, \otimes_{(r)}\big) $ is a multi-graded algebra over $ \vs $.
By recursively defining $ \Seq^r := \Seq(\Seq^{r-1}) $ with the convention that $ \Seq^0 = \{ \varnothing \} $. We may write down the multi-grading for $ \ta{r}{\vs} $ as follows
\begin{align}
\ta{1}{\vs}_{\bk} &:= \vs^{\otimes_{(1)}\bk}, \quad \ta{r}{\vs} = \bigoplus_{\bk \in \Seq^{r}} \ta{r}{\vs}_\bk, \\ 
\text{ where } \ta{r}{\vs}_{\bk} &:= \ta{r-1}{\vs}_{\bk_1} \otimes_{(r)} \cdots \otimes_{(r)} \ta{r-1}{\vs}_{\bk_l} \text{ for } \bk = \bk_1 \cdots \bk_l \in \Seq^{r}.
\end{align}
We also use the following recursive definition of the degree for a multi-index 
\begin{align}
\mathrm{deg}.\bk = \mathrm{deg}.\bk_1 + \cdots + \mathrm{deg}.\bk_l, \text{ for }
\bk = \bk_1 \cdots\bk_l \in \Seq^{r}, \quad \mathrm{deg}.\varnothing = 1.
\end{align} 
Which allows us to write down a grading for $ \ta{r}{\vs} $ as
\begin{align} \label{eq:grading}
\ta{r}{\vs} = \bigoplus_{k\geq0} \Big( \underset{\begin{subarray}{c}\bk \in \Seq^{r},\\ \mathrm{deg}.\bk = k\end{subarray}}\bigoplus \ta{r}{\vs}_\bk \Big).
\end{align}
See \cite[Section 3]{Ebrahimi15} for more on $ \ta{1}{\vs} $ and $ \ta{2}{\vs} $.

\begin{example}\quad
	\begin{itemize}
		\item 
		$ \ta{1}{\vs} $ is the standard tensor algebra over $ \vs $ and is graded over $ \Seq^1 \simeq \N $ by
		\begin{align}
		\ta{1}{\vs} = 1 + \bigoplus_{n\geq 1} \vs^{\otimes_{(1)} n}.
		\end{align}
		\item 
		$ \ta{2}{\vs} $ is graded over sequences in $ \N $ by
		\begin{align}
		\ta{2}{\vs} = 1 + \bigoplus_{n_1, \ldots, n_k \geq 1} \vs^{\otimes_{(1)} n_1}\otimes_{(2)}\cdots \otimes_{(2)}\vs^{\otimes_{(1)} n_k}.
		\end{align}
		\item 
		$ \ta{3}{\vs} $ is graded over matrices in $ \N $ by
		\begin{align}
		\ta{3}{\vs} = 1 + \underset{\begin{subarray}{c}n_1^1, \ldots, n_{k_1}^{1} \geq 1 \\[-7pt] \vdots \\[-4pt] n_1^{k_2}, \ldots, n_{k_1}^{k_2} \geq 1 \end{subarray}}\bigoplus 
		\big( \vs^{\otimes_{(1)} n_1^1}\otimes_{(2)}\cdots \otimes_{(2)}\vs^{\otimes_{(1)} n_{k_1}^1}\big) \otimes_{(3)} \cdots 
		\otimes_{(3)} \big( \vs^{\otimes_{(1)} n_1^{k_2}}\otimes_{(2)}\cdots \otimes_{(2)}\vs^{\otimes_{(1)} n_{k_1}^{k_2}}\big).
		\end{align}
	\end{itemize}
\end{example}

\begin{remark}
	For any ring $ R $ and $ R $ module $ M $ the above construction (disregarding the norm) yields a sequence of $ R $ algebras
	\begin{align}
	M \subseteq \ta{1}{M} \subseteq \ta{2}{M} \subseteq \cdots.
	\end{align}
	This sequence is characterized by the following universal property which follows from the universal property of the tensor algebra:
	\begin{center}
		For any $ R $ module $ N $, $ r\geq 0 $ and $ R $-module homomorphism $ \varphi : \ta{r}{M} \to N $ there exists a unique $ R $-algebra homomorphism $ \Phi : \ta{r+1}{M} \to N $ such that $ \varphi = \Phi \circ \iota $ where $ \iota $ is the inclusion map $ \ta{r}{M} \hookrightarrow \ta{r+1}{M} $.
	\end{center}
\end{remark}
\begin{example}
	For a concrete example of this, if $ \vs $ is some vector space over $ \R $ and $ X $ is a bounded random variable on $ \vs $, then its associated moment map is the linear map 
	\begin{align}
	\mu_X : \ta{1}{\vs} \to \R, \quad 
	\mu_X(e_{i_1}\cdots e_{i_k}) = \EE(X_{i_1}\cdots X_{i_k})
	\end{align}
	which induces the algebra homomorphism $ \mu^\star_X : \ta{2}{\vs} \to \R $. The reader familiar with \emph{cumulants} might note that the cumulants of $ X $ can then be described as linear functions of $ \mu^\star_X $. For example
	\begin{align}
	&\kappa(X_1,X_2,X_3) 
	= \EE(X_1X_2X_3) - \EE(X_1)\EE(X_2X_3) - \EE(X_2)\EE(X_1X_3) - \EE(X_3)\EE(X_1X_2) + 2\EE(X_1)\EE(X_2)\EE(X_3) \\
	&= \mu^\star_X(e_1e_2e_3) - \mu^\star_X(e_1\otimes_{(2)}e_2e_3) - \mu^\star_X(e_2\otimes_{(2)}e_1e_3) - \mu^\star_X(e_3\otimes_{(2)}e_1e_2) +2\mu^\star_X(e_1\otimes_{(2)}e_2\otimes_{(2)}e_3)
	\end{align}
\end{example}

\subsection{The multi-grading of $ \ata{r}{V} $}
Recall that the signature map, Definition~\ref{def:sig1}, takes paths on a vector space $ V $ as input and maps them into $ \Ta{1}{V} $ which is isomorphic to the completion of $ \ta{1}{V\oplus \R} $, so that in order to represent the signature of a path in $ V $ it is enough to represent elements of $  \ta{1}{V} $ for arbitrary finite dimensional $ V $.

In the rank two case we are not so lucky however, as $ \ata{2}{V} = \ata{1}{\ata{1}{V}} \simeq \ta{1}{\ta{1}{V\oplus \R}\oplus \R} $ which is not isomorphic to a rank 2 tensor algebra over any finite dimensional space. 
By definition, $ \ata{2}{V} $ is the free algebra generated by $ \ata{1}{V} $ and one indeterminate, so by Lemma \ref{lem:MG} it is multi-graded by $ \cM(\cM^1, \cM^0) $. We may write:
\begin{align}
\ata{2}{V} = \bigoplus_{m_1, \ldots, m_n \in \cM^1, \cM^0} \ata{2}{V}_{m_1} \otimes_{(2)} \cdots \otimes_{(2)} \ata{2}{V}_{m_n}
\end{align}
where $ \ata{2}{V}_{m} = \ta{1}{V}_{m} $ if $ m \in \cM^1 $ and $ \ata{2}{V}_{m} \simeq \R $ if $ m \in \cM^0 $. This multi-grading also allows us to write down a grading for $ \ata{2}{V} $ like in Equation \eqref{eq:grading} where the degree is defined in the natural way compatible with the degrees on $ \cM^1, \cM^0 $.

In the general case, $ \ata{r}{V} $ is the free algebra generated by $ \ata{r-1}{V} $ and one indeterminate, so its multi-grading may be recursively defined similarly. 

\subsection{Dimensions of the truncated spaces}
It is well known that if $ V $ is a $ d $-dimensional space, then $ V^{\otimes k} $ has dimension $ d^k $. Hence we can write (Recalling Equation \eqref{eq:grading}):
\begin{align}
\ta{1}{V} = \bigoplus_{k\geq 0} \ta{1}{V}_k, \quad  \dim. \ta{1}{V}_k = d^k.
\end{align}
In the case of $ \ta{2}{V} $ we may define $ \ta{2}{V}_k := \underset{\begin{subarray}{c}\bk \in \Seq^{2},\\ \mathrm{deg}.\bk = k\end{subarray}}\bigoplus \ta{2}{\vs}_\bk $ and write
\begin{align}
\ta{2}{V} = \bigoplus_{k\geq 0} \ta{2}{V}_k, \quad \dim. \ta{2}{V}_k = \frac12 (2d)^k.
\end{align}
To see why $ \dim. \ta{2}{V}_k = \frac12 (2d)^k $, note that since, as a vector space, $ \ta{2}{\vs}_\bk $ is isomorphic to $ \ta{1}{\vs}_k $ and hence for any $ \bk $ with $ \deg.\bk = k $, it also has dimension $ d^k $, so in order to determine the dimension of $ \ta{2}{V}_k $ it is enough to count $ \# \{ \bk \in \Seq^{2}: \mathrm{deg}.\bk = k \} = 2^{k-1} $.

In the case of $ \ata{1}{V} $ it is easily seen that $ \dim. \ata{1}{V}_k = (d+1)^k $, hence we may write 
\begin{align}
\ata{1}{V} = \bigoplus_{k\geq 0} \ata{1}{V}_k, \quad  \dim. \ata{1}{V}_k = (d+1)^k.
\end{align}
The case $ \ata{2}{V} $ is slightly more complicated, but can be characterised by a simple linear recursion.
\begin{prop}
	Let $ V $ be a $ d $-dimensional vector space, then 
	\begin{align}
	\ata{2}{V} = \bigoplus_{k\geq 0} \ata{2}{V}_k, \quad  \mathrm{dim}. \ata{2}{V}_k := A_{d+1}(k),
	\end{align}
	where $ A_d $ satisfies the recursion
	\begin{align}
	A_d(k) = (2d+1)A_d(k-1) - dA_d(k-2), \quad 
	A_d(0) = 1,\,\, A_d(1) = d+1
	\end{align}
\end{prop}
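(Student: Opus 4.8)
The plan is to read the dimensions of the graded pieces straight off the definition $\ata{2}{V}=\bigoplus_{m\geq 0}\big(\R\oplus\ata{1}{V}-1\big)^{\otimes m}$, i.e. to regard $\ata{2}{V}$ as the (non-augmented) tensor algebra over the generating space $G:=\R\oplus(\ata{1}{V}-1)$, and then convert the tensor-algebra grading into the stated linear recursion. Throughout I write $e:=d+1$ and use the already-established dimension count $\dim\ata{1}{V}_j=(d+1)^j=e^j$.

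First I would compute the graded dimensions $g_j:=\dim G_j$ of the generating space. The summand $\ata{1}{V}-1=\bigoplus_{j\geq 1}\ata{1}{V}_j$ contributes $e^j$ in each degree $j\geq 1$ and nothing in degree $0$, while the extra copy of $\R$ is the rank-$2$ time coordinate and sits in degree $1$. Hence $g_0=0$, $g_1=e+1$, and $g_j=e^j$ for $j\geq 2$. By Lemma~\ref{lem:MG} and the grading \eqref{eq:grading}, a homogeneous basis of $\ata{2}{V}_k$ is indexed by words $w_1\otimes_{(2)}\cdots\otimes_{(2)}w_m$ with $w_i$ a basis vector of $G_{j_i}$, each $j_i\geq 1$, and $j_1+\cdots+j_m=k$. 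Writing $B(k):=\dim\ata{2}{V}_k$ with $B(0)=1$, this bijection yields the convolution identity $B(k)=\sum_{j=1}^{k}g_j\,B(k-j)$ for $k\geq 1$.

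From here I would extract the two-term recursion by telescoping. Forming $B(k)-e\,B(k-1)$ and reindexing, the coefficient of $B(k-i)$ becomes $g_i-e\,g_{i-1}$ for $i\geq 2$; since $g_i=e^i$ for $i\geq 2$ one has $g_i-e\,g_{i-1}=0$ for all $i\geq 3$, so every term cancels except the $i=1$ and $i=2$ contributions. This leaves $B(k)-e\,B(k-1)=g_1 B(k-1)+(g_2-e\,g_1)B(k-2)=(e+1)B(k-1)-e\,B(k-2)$, i.e. $B(k)=(2e+1)B(k-1)-e\,B(k-2)=A_{d+1}(k)$. The initial conditions match, since $B(0)=1=A_e(0)$ and $B(1)=g_1=e+1=A_e(1)$. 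Equivalently, one can package the argument as Poincar\'e series: $G$ has generating function $x+\tfrac{ex}{1-ex}$, whence $\sum_{k}B(k)x^k=(1-G(x))^{-1}=\tfrac{1-(d+1)x}{1-(2d+3)x+(d+1)x^2}$, and the denominator $1-(2e+1)x+ex^2$ is exactly the characteristic polynomial of the claimed recursion.

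The only genuinely delicate point — and where I would be most careful — is the degree bookkeeping in passing from $\ata{1}{V}$ to $G$: one must (i) discard the scalar part of $\ata{1}{V}$, (ii) place the fresh time coordinate in degree $1$ rather than degree $0$, so that $G$ is positively graded and $\sum_m G^{\otimes m}$ is degreewise finite, and (iii) carry the shift $e=d+1$ inherited from the rank-$1$ time augmentation. It is precisely the resulting value $g_1=e+1$ (rather than $g_1=e$) that produces the initial datum $A_{d+1}(1)=d+2$; once the grading of $G$ is pinned down, the remainder is the routine convolution and telescoping computation above.
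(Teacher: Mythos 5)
Your proof is correct, and it takes a genuinely different route from the paper's. You exploit the defining presentation $\ata{2}{V}=\bigoplus_{m\geq 0}G^{\otimes m}$ with $G=\R\oplus(\ata{1}{V}-1)$ positively graded, read off the graded dimensions $g_1=d+2$, $g_j=(d+1)^j$ for $j\geq 2$ (your degree bookkeeping matches the paper's convention, which assigns degree $1$ to the adjoined indeterminate via $\mathrm{deg}.\varnothing=1$; this is indeed essential, otherwise the graded pieces would be infinite-dimensional), and then convert the convolution identity $B(k)=\sum_{j\geq 1}g_jB(k-j)$ into the two-term recursion by telescoping against the geometric tail of $(g_j)_{j\geq 2}$ --- equivalently via the rational Poincar\'e series $\frac{1-(d+1)x}{1-(2d+3)x+(d+1)x^2}$. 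The paper instead enumerates the multi-indices $\bk\in\cM(\cM^1,\cM^0)$ of fixed degree directly, obtaining the explicit double sum $\dim\ata{2}{V}_k=1+\sum_{n=1}^{k}\sum_{l=1}^{n}\binom{n}{l}\binom{k+l-n-1}{k-n}(d+1)^{l+k-n}$, rewrites the inner sums as Gaussian hypergeometric values ${}_2F_1$, and verifies the three-term recursion through contiguous-relation identities established by term-by-term manipulation of the hypergeometric series. Your argument is substantially shorter, avoids the hypergeometric machinery entirely, and yields as a by-product the closed-form generating function (not stated in the paper), from which both the recursion and the initial values $B(0)=1$, $B(1)=d+2=A_{d+1}(1)$ are immediate; what it does not produce, and what the paper's heavier computation does record, is an explicit closed-form summation formula for each individual $\dim\ata{2}{V}_k$. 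Both arguments rest on the same combinatorial substrate, namely the free-monoid multi-grading of Lemma \ref{lem:MG} together with the degree conventions of Equation \eqref{eq:grading}, and they agree on all values, including the point where the two differ from the informal remark in Section \ref{sec: experiments}.
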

\begin{proof}
	Note that if $ k \in \cM^1 $, then $ \ata{2}{V}_k = \ata{1}{V}_k \simeq (V\oplus\R)^{\otimes k} $ and if $ k \in \cM^1 $, then $ \ata{2}{V}_k \simeq \R $, putting this together we get for $ \bk = k_1 \cdots k_l \in \cM(\cM^1, \cM^0) $
	\begin{align}
	\dim \ata{2}{V}_\bk = \prod_{k_i \in \cM^1} \dim \ta{2}{V}_{k_i} = (d+1)^{\sum_{k_i \in \cM^1} k_i}.
	\end{align}
	Assume that $ \bk = k_1 \cdots k_n $ and $ k_{i_1}, \ldots, k_{i_l} \in \cM^1 $, then since $ \deg k_i = 1 $ for any $ i \not= i_1, \ldots, i_l $ it must be true that $ \deg. \bk  = \sum_{k_i \in \cM^1} k_i + n - l $. By setting $ \ata{2}{V}_k := \underset{\begin{subarray}{c}\bk \in \cM(\cM^1,\cM^0),\\ \mathrm{deg}.\bk = k\end{subarray}}\bigoplus \ata{2}{\vs}_\bk $ we see that
	\begin{align}
	\dim \ata{2}{V}_k = \sum_{n=0}^k\sum_{l=0}^n \# \{ \bk = k_1 \cdots k_n \in \cM(\cM^1,\cM^0) : \deg. \bk = k,  k_{i_1}, \ldots, k_{i_{l}} \in \cM^1 \} (d+1)^{l + k - n}.
	\end{align}
	We note that for $ n,l $ fixed we have
	\begin{align}
	&\# \{ \bk = k_1 \cdots k_n \in \cM(\cM^1,\cM^0) : \deg. \bk = k, k_{i_1}, \ldots, k_{i_{l}} \in \cM^1 \} \\
	&= {n \choose l} \# \{ \bk = k_1 \cdots k_l \in \cM^2 : \deg. \bk = k+l-n \} \\
	&= {n \choose l}{k+l-n-1 \choose k-n}.
	\end{align}
	Hence
	\begin{align}
	\dim \ata{2}{V}_k = 1 + \sum_{n=1}^k\sum_{l=1}^n {n \choose l}{ k+l-n-1 \choose k-n}(d+1)^{l + k - n}.
	\end{align}
	By summing over the diagonal this can be rewritten as 
	\begin{align}
	\dim \ata{2}{V}_k &= 1 + \sum_{n=1}^k (d+1)^n \sum_{m=k-n+1}^k {m \choose m-k+n}{ n-1 \choose k-m}\\ &= 1 + \sum_{n=1}^k (d+1)^n (k + 1 - n){}_2F_1(1-n, k+2-n, 2, -1),
	\end{align}
	where $ {}_2F_1 $ is the Gaussian Hypergeometric function. It follows that for $ k \geq 2 $
	\begin{align}
	&\dim \ata{2}{V}_k - (2d+3)\dim \ata{2}{V}_{k-1} + (d+1)\dim \ata{2}{V}_{k-2} \\ 
	= \sum_{n=2}^{k-1} &(d+1)^n \Big[ (k + 1 - n){}_2F_1(1-n, k+2-n, 2, -1) - 2(k + 1 - n){}_2F_1(2-n, k+2-n, 2, -1) \\
	+ &(k - n){}_2F_1(2-n, k+1-n, 2, -1) - (k - n){}_2F_1(1-n, k+1-n, 2, -1) \Big] \\
	+ &(d+1)^k \big[ {}_2F_1(1-k, 2, 2, -1) - 2{}_2F_1(2-k, 2, 2, -1) \big] \\
	+ &(d+1) \big[ k{}_2F_1(0, k+1, 2, -1) - (k-1){}_2F_1(0, k, 2, -1) - 1 \big].
	\end{align}
	Because of the two facts 
	\begin{align}
	{}_2F_1(-k,2,2,-1) = 2^k, \quad 
	{}_2F_1(0,k,2,-1) = 1, 
	\end{align}
	all that remains is to show that for $ a > 0 $, $ F(-a,b) := {}_2F_1(-a,b,2,-1) $ satisfies the recursion 
	\begin{align}
	(b+1)F(-a,b+2) - 2(b+1)F(1-a,b+2) + bF(1-a,b+1) - bF(-a,b+1) = 0.
	\end{align}
	To see this, note that by expanding $ {}_2F_1(-a,b,2,-1) $ in its hypergeometric series we may write
	\begin{align}
	{}_2F_1(-a,b,2,-1) = \sum_{k = -\infty}^{\infty} f(a,b,k), \quad f(a,b,k) = {a \choose k} \frac {(b)_k}{(k+1)!} 1_{ \{ 0 \leq k \leq a \} },
	\end{align}
	where $ (b)_k $ is the rising Pochhammer symbol. It is straightforward to verify that $ f $ satisfies
	\begin{align}
	f(a+1,b,k) &= \frac{a+1}{a+1-k}f(a,b,k), \quad
	f(a,b+1,k) = \frac{b+k}{b}f(a,b,k), \\
	f(a,b,k+1) &= \frac{(a-k)(b+k)}{(k+1)(k+2)}f(a,b,k).
	\end{align}
	By iterating these three relations one can show that
	\begin{align}
	&(b+1)f(a,b+2,k) - 2(b+1)f(a-1,b+2,k) + bf(a-1,b+1,k) - bf(a,b+1,k) \\
	= &\frac{k(k+1)}a f(a,b+1,k) - \frac{(k+1)(k+2)}a f(a,b+1,k+1),
	\end{align}
	and the claimed recursion follows by summing over $ k $ since 
	\begin{align}
	&(b+1)F(-a,b+2) - 2(b+1)F(1-a,b+2) + bF(1-a,b+1) - bF(-a,b+1) \\
	&= \sum_{k = -\infty}^{\infty} (b+1)f(a,b+2,k) - 2(b+1)f(a-1,b+2,k) + bf(a-1,b+1,k) - bf(a,b+1,k) \\
	&= \sum_{k = -\infty}^{\infty} \frac{k(k+1)}a f(a,b+1,k) - \sum_{k = -\infty}^{\infty} \frac{(k+1)(k+2)}a f(a,b+1,k+1) = 0.
	\end{align}
\end{proof}

\begin{remark}
	The sequences $ A_0(k), A_1(k) $ are listed as $ A001519 $ and $ A052984 $ respectively on OEIS.
\end{remark}

\subsection{Higher rank tensor algebras on Banach spaces}

\begin{definition} \label{def:norm}
	We make $ \ata{r}{\vs} $ into a normed space with the norm defined inductively as 
	\begin{align}
	\lVert t \rVert_r = \sum_{m\geq 0} \lVert \pi_m t \rVert_{\ata{r-1}{\vs}^{\otimes m}}
	\end{align}
	where $ \pi_m : \ata{r}{\vs} \to \underset{\mathrm{deg}.\bk = m}\bigoplus \ata{r-1}{\vs}_\bk $ denotes the projection map onto components of degree $ k $.
	Define $ \Ta{r}{\Ban} $ to be the completion of $ \ata{r}{\Ban} $ under the norm $ \lVert \cdot \rVert_r $.
\end{definition}

\begin{remark} \label{rmk:embedding}
	Since the embedding $ \ata{r}{\Ban} \hookrightarrow \ata{r+1}{\Ban} $ is an isometric isomorphism onto its image, the same is true for the embedding $ \Ta{r}{\Ban} \hookrightarrow \Ta{r+1}{\Ban} $.
\end{remark}

\begin{remark}
	Note that $ \sigr{r} $ indeed takes values in $ \Ta{r}{E} $, since by the above Remark \ref{rmk:embedding} it is enough to show that $ \sigr{1} $ takes values in $ \Ta{1}{E} $ which follows from multiplication and addition being continuous and the exponential series being absolutely convergent.
\end{remark}

By unravelling Definition \ref{def:norm} we may write for $ t \in \ata{r}{V} $
\begin{align}
\lVert t \rVert_r = \sum_{\bk \in \cM^r} \lVert \pi_\bk t \rVert
\end{align}
where $ \pi_\bk : \ata{r}{V} \to \ata{r}{V}_\bk $ is projection onto $ \ata{r}{V}_\bk $ which is topologically isomorphic to a tensor copy of $ V $, hence it has a well defined norm by the assumption that $ V $ has an admissible norm. Finally, we note that if $\Ban$ is a Hilbert space, then $\Ta{r}{\Ban}$ also possesses a Hilbert space structure.
\begin{definition}\label{def: Hilbert tensor algebra}
	For a Hilbert space $ \vs $ we equip $ \ta{r}{\vs} $ with the recursively defined inner product 
	\begin{align}
	\langle t, s \rangle_r = \sum_{m\geq 0} \langle \pi_m t, \pi_m s \rangle_{\ta{r-1}{\vs}^{\otimes m}}
	\end{align}
	and we denote by $\tilde \cH^r(\vs) $ and  $ \cH^r(\vs) $ the respective completions of $ \ta{r}{\vs} $ and $ \ata{r}{\Ban} $ with this inner product.
\end{definition}
\subsection{Tensor normalization estimates} \label{app:normalization}
Recall that the scaling of an element $v \in \vs$ by $\lambda \in \R$, $\lambda \mapsto \lambda v$, extends naturally to a dilation map on $\prod_{m\ge 0}\Ban^{\otimes m}$:
$$
\delta_{\lambda}: \mathbf{t} \mapsto (\mathbf{t}^0, \lambda \mathbf{t}^1, \lambda^2 \mathbf{t}^2, \ldots).
$$
\begin{definition}\label{def: tensor normalization}
	A tensor normalization is a continuous injective map of the form
	$$
	\Lambda: \Ta{1}{\Ban} \rightarrow \{\mathbf{t} \in \Ta{r}{\Ban}: \|\mathbf{t}\| \le 1 \}, \quad \mathbf{t} \mapsto \delta_{\lambda(\mathbf{t})}\mathbf{t}, 
	$$
	where $\lambda: \Ta{r}{\vs} \rightarrow (0,\infty)$ is a function.
\end{definition}
It is possible to show that there always exists a tensor normalization, see \cite[Proposition A.2 and Corollary A.3]{Oberhauser18}.
\begin{theorem}
	For any Banach space $\vs$ and any admissible norm on $(\vs^{\otimes m})_{m \ge 1}$, there exists a tensor normalization map $\Lambda$.
\end{theorem}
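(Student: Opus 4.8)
The plan is to construct the map $\Lambda$ by hand, choosing the dilation factor $\lambda(\mathbf{t})$ as the solution of a scalar equation. First I would unravel the norm using Definition~\ref{def:norm}: for $\mathbf{t}=(\mathbf{t}^0,\mathbf{t}^1,\ldots)\in\Ta{1}{\vs}$ one has $\lVert \mathbf{t}\rVert = \sum_{m\ge 0}\lVert \pi_m\mathbf{t}\rVert$, and since $\delta_\lambda\mathbf{t}$ has $m$-th component $\lambda^m\mathbf{t}^m$ with $\lambda>0$, the map
\[
\rho_{\mathbf{t}}:(0,\infty)\to(0,\infty),\qquad \rho_{\mathbf{t}}(\lambda):=\lVert \delta_\lambda\mathbf{t}\rVert=\sum_{m\ge 0}\lambda^m\lVert \pi_m\mathbf{t}\rVert,
\]
is continuous and, whenever some higher-degree component of $\mathbf{t}$ is nonzero, strictly increasing, with $\rho_{\mathbf{t}}(\lambda)\downarrow \lVert \pi_0\mathbf{t}\rVert$ as $\lambda\to 0^+$ and $\rho_{\mathbf{t}}(\lambda)\to\infty$ as $\lambda\to\infty$. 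I would carry this out on the set of tensors with unit scalar part $\pi_0\mathbf{t}=1$, which is precisely where the signature map $\Sigo$ takes values and hence where $\Lambda$ is applied in Proposition~\ref{prop: higher norm injective}; on this set $\rho_{\mathbf{t}}$ sweeps out the whole interval $(1,\infty)$.

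Next I would fix once and for all a continuous, strictly increasing \emph{profile} $\psi:[1,\infty)\to[1,C)$ with $\psi(1)=1$ and $\psi(x)\le x$ (so $\psi$ is a bounded bijection onto $[1,C)$), and define $\lambda(\mathbf{t})$ to be the unique solution in $(0,1]$ of
\[
\rho_{\mathbf{t}}\bigl(\lambda(\mathbf{t})\bigr)=\psi\bigl(\lVert \mathbf{t}\rVert\bigr).
\]
Existence and uniqueness of this solution follow from the intermediate value theorem together with the strict monotonicity of $\rho_{\mathbf{t}}$ noted above, using $1=\psi(1)\le\psi(\lVert\mathbf{t}\rVert)\le\lVert\mathbf{t}\rVert=\rho_{\mathbf{t}}(1)$. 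Setting $\Lambda(\mathbf{t}):=\delta_{\lambda(\mathbf{t})}\mathbf{t}$ then gives $\lVert\Lambda(\mathbf{t})\rVert=\psi(\lVert\mathbf{t}\rVert)<C$, so the image is bounded. Continuity of $\mathbf{t}\mapsto\lambda(\mathbf{t})$ I would obtain from the implicit-function/monotone-dependence argument applied to the jointly continuous map $(\mathbf{t},\lambda)\mapsto\rho_{\mathbf{t}}(\lambda)$, and continuity of $\Lambda$ then follows since dilation is jointly continuous in $(\lambda,\mathbf{t})$.

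The crucial and most delicate step is injectivity, and here the choice of a \emph{strictly increasing} profile $\psi$ (rather than dilating every tensor onto a fixed sphere) is essential: if one simply normalized $\lVert\Lambda(\mathbf{t})\rVert$ to a constant, the entire dilation ray $\{\delta_\mu\mathbf{t}:\mu>0\}$ would collapse to one point and injectivity would fail. With the monotone profile I can instead recover the data from the image: given $\mathbf{u}=\Lambda(\mathbf{t})$, the identity $\lVert\mathbf{u}\rVert=\psi(\lVert\mathbf{t}\rVert)$ and invertibility of $\psi$ yield $\lVert\mathbf{t}\rVert=\psi^{-1}(\lVert\mathbf{u}\rVert)$; then $\lambda(\mathbf{t})$ is the unique $\ell\in(0,1]$ with $\lVert\delta_{1/\ell}\mathbf{u}\rVert=\psi^{-1}(\lVert\mathbf{u}\rVert)$, unique because $\ell\mapsto\lVert\delta_{1/\ell}\mathbf{u}\rVert$ is again strictly monotone, and finally $\mathbf{t}=\delta_{1/\lambda(\mathbf{t})}\mathbf{u}$ is determined. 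Hence $\Lambda$ is injective. I expect injectivity to be the main obstacle, precisely because it is what rules out the naive construction; the remaining verifications (admissibility enters only to guarantee the component norms are finite and sub-multiplicative so that $\rho_{\mathbf{t}}$ is well defined) are routine. The detailed estimates, including the explicit bound on the image and the extension to the stated codomain, are carried out in \cite[Proposition A.2 and Corollary A.3]{Oberhauser18}.
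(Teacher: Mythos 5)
Your construction is, in essence, the paper's own: the paper proves this theorem purely by citation to \cite[Proposition A.2 and Corollary A.3]{Oberhauser18}, and the construction there is exactly your profile equation $\lVert \delta_{\lambda(\mathbf{t})}\mathbf{t}\rVert = \psi(\lVert\mathbf{t}\rVert)$ with a bounded, strictly increasing $\psi$, solved via the intermediate value theorem and strict monotonicity of $\lambda \mapsto \lVert\delta_\lambda \mathbf{t}\rVert$, with injectivity recovered by inverting first $\psi$ and then the dilation --- including your observation that normalizing onto a sphere of fixed radius would collapse dilation rays and destroy injectivity. One remark (a defect of the paper's Definition~\ref{def: tensor normalization}, not of your argument): no map of the form $\mathbf{t}\mapsto\delta_{\lambda(\mathbf{t})}\mathbf{t}$ with $\lambda(\mathbf{t})>0$ can land in the closed unit ball once $\lVert\pi_0\mathbf{t}\rVert=1$ and some higher component is nonzero, nor can it have bounded image on all of $\Ta{1}{\Ban}$ (dilations fix purely scalar tensors), so the codomain must be read as a bounded set and the domain as the tensors with unit scalar part --- which is precisely the reading you adopt and the statement the cited reference actually proves.
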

For any (discrete time) path $x \in (I \rightarrow \Ban)$, $\Sig{x}$ takes values in $\Ta{1}{\vs}$, see \cite[Theorem 3.12]{Lyons02}. In particular, for a given tensor normalization $\Lambda$,  $\Lambda \circ \Sig{x}$ takes values in the unit ball of the Banach space $\Ta{1}{\vs}$, and therefore for any $\mu \in \Meas{\idx \to \vs}$, the Bochner integral $\overline {\Lambda \circ \Sigo}(x) :=\int_{x \in \Ban^{\idx}} \Lambda \circ \Sig{x} \mu(dx)$ is well--defined. Then we may iteratively define $ \Lambda \Sigo^r := \Lambda \circ \Sigo \circ \Lambda \Sigo^{r-1} $

\begin{definition}\label{def: higher rank normalized signature}
	We call $ \sigrno{r} := \Lambda \Sigo^{r} $ the robust (or, normalized) signature map of rank $ r $ and $\esigrno{r} := \EE\sigrno{r}$ is called the robust expected signature map of rank $ r $.
\end{definition}

The proof of the next proposition can be found in \cite[Corollary 5.7]{Oberhauser18}.
\begin{proposition}\label{prop: expected signature is injective}
	Let $ \vs $ be a separable Banach space. Then $\esigrno{1}: \Meas{\idx \to \vs} \rightarrow \Ta{1}{\vs}$ is injective. 
\end{proposition}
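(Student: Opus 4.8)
The plan is to deduce injectivity from the fact that the robust signature $\sigrno{1} = \Lambda \circ \Sigo$ is a bounded, continuous, injective map whose linear test functionals separate finite measures on path space. First I would unwind the definition so that $\esigrno{1}(\mu) = \int_{\idx \to \vs} \sigrno{1}(x)\,\mu(\mathrm{d}x)$, a Bochner integral that is well defined precisely because $\Lambda$ takes values in the unit ball of $\Ta{1}{\vs}$. It then suffices to show that if $\int \langle \ell, \sigrno{1}(x)\rangle\,\mu(\mathrm{d}x) = \int \langle \ell, \sigrno{1}(x)\rangle\,\nu(\mathrm{d}x)$ for every $\ell \in (\Ta{1}{\vs})^{\star}$, then $\mu = \nu$. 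I would record two structural facts at the outset: the state space $E := (\idx \to \vs)$ is Polish (a finite product of copies of the separable Banach space $\vs$), so every finite Borel measure on $E$ is Radon and tight; and $\sigrno{1}$ is injective, by Proposition~\ref{prop: higher injective} together with the injectivity of $\Lambda$ from Definition~\ref{def: tensor normalization}.

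The core of the argument is a Stone--Weierstrass step carried out in the strict topology on $C_b(E)$, whose continuous dual is exactly the space of finite Radon measures on $E$. Let $\mathcal{A}$ be the unital subalgebra of $C_b(E)$ generated by the coordinate functions $x \mapsto \langle \ell, \sigrno{1}(x)\rangle$. Since $\sigrno{1}$ is injective and $(\Ta{1}{\vs})^{\star}$ separates points of $\Ta{1}{\vs}$, the algebra $\mathcal{A}$ separates points of $E$ and contains the constants, so a strict-topology Stone--Weierstrass theorem renders $\mathcal{A}$ dense in $C_b(E)$; density then upgrades $\mu(g) = \nu(g)$ for $g \in \mathcal{A}$ to $\mu(f) = \nu(f)$ for all $f \in C_b(E)$, i.e. $\mu = \nu$ by Radon-ness. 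The one gap is that the hypothesis only gives equality on \emph{linear} functionals of $\sigrno{1}$, whereas $\mathcal{A}$ also contains products. To bridge this I would use the algebraic compatibility exploited in the proof of Proposition~\ref{prop: higher norm injective}: a product of two linear functionals of the ordinary signature is again a single linear functional of the signature (the shuffle identity), and the tensor normalization $\Lambda$, being a state-dependent dilation $\mathbf{t} \mapsto \delta_{\lambda(\mathbf{t})}\mathbf{t}$, respects this structure because dilation is a graded algebra homomorphism. This should let me re-express every element of $\mathcal{A}$ as an absolutely convergent, uniformly bounded series of coordinate functionals, so that equality on all linear functionals forces equality on all of $\mathcal{A}$.

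The main obstacle is exactly this interplay between non-compactness and the shuffle structure: boundedness of the integrands \emph{requires} the normalization $\Lambda$, but the clean shuffle-algebra identity lives on the raw signature $\Sigo$, and the $x$-dependent dilation factor $\lambda(\Sigo(x))$ obstructs a naive transfer. Controlling these tails -- making the rewriting of products into bounded series uniform enough to survive passage to the strict-topology closure -- is the technical heart of the statement, and is precisely what is executed in \cite[Corollary 5.7]{Oberhauser18}. As a conceptually cleaner but analytically comparable alternative, I would instead exhaust $E$ by compact sets using tightness and reduce to the compact case, where Theorem~\ref{cor:injective}(\ref{itm:char}) already gives injectivity of the expected signature and Theorem~\ref{cor:injective}(\ref{itm:dense}) gives density of signature functionals in $\mathrm{C}(\idx \to \tops,\R)$; in this route the normalization serves only to keep all integrals finite while the compact-case density is patched together across the exhaustion.
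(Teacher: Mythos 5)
Your primary route is, in substance, the paper's own: the paper gives no self-contained argument for this proposition but defers to \cite[Corollary 5.7]{Oberhauser18}, whose proof is exactly the strict-topology Stone--Weierstrass argument you outline, and which the paper replays in Proposition~\ref{prop: central proposition} (taking $\mathcal{X}=\vs$ and $\varphi=\mathrm{id}$ there recovers precisely this statement). One clarification on the step you single out as the ``technical heart'': the interaction between the normalization and the shuffle structure is cleaner than you fear. By Theorem~\ref{thm: properties of signature and normalzied signature}, the robust signature $\sigrno{1}=\Lambda\circ\Sigo$ still takes values in the group-like elements $G$, essentially because a dilation sends signatures to signatures of rescaled paths, $\delta_c\Sig{x}=\Sig{cx}$ (cf.\ the proof of Proposition~\ref{prop: a feasible tensor normalization}). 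Consequently, for finite-degree functionals $\ell_1,\ell_2$ one has the \emph{exact} identity $\langle \ell_1,\sigrno{1}(x)\rangle\,\langle \ell_2,\sigrno{1}(x)\rangle=\langle \ell_1\ast\ell_2,\sigrno{1}(x)\rangle$, where $\ell_1\ast\ell_2$ is the finite-degree shuffle product dual to the coproduct $\Delta$. So your algebra $\mathcal{A}$ is already contained in the family of coordinate functionals --- no rewriting into ``uniformly bounded series'' is needed --- and the only genuinely nontrivial input is the strict-topology Stone--Weierstrass theorem itself, i.e.\ \cite[Theorem 2.6]{Oberhauser18}.

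Your fallback route via compact exhaustion, however, has a real gap and should be dropped. Writing $E\coloneqq(\idx\to\vs)$, the hypothesis $\esigrno{1}(\mu)=\esigrno{1}(\nu)$ concerns integrals over all of $E$; after restricting to a compact $K$ with $|\mu|(E\setminus K)+|\nu|(E\setminus K)<\varepsilon$ you only obtain the approximate statement $\lVert\esigrno{1}(\mu|_K)-\esigrno{1}(\nu|_K)\rVert\le\varepsilon$ (using $\lVert\sigrno{1}\rVert\le 1$). Injectivity of the expected signature on measures carried by $K$ --- note that Theorem~\ref{cor:injective}, item~\ref{itm:char}, concerns the un-normalized map $\esigo$, not $\esigrno{1}$ --- is a purely algebraic statement with no quantitative stability attached, so it converts approximate equality of expected signatures into no information whatsoever about $\mu|_K-\nu|_K$. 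If instead you try to test a fixed $f\in C_b(E)$ by approximating it on $K$ by a signature functional $\langle\ell,\Sig{\cdot}\rangle$ via Theorem~\ref{cor:injective}, item~\ref{itm:dense}, the error terms off $K$ are of order $\lVert\ell\rVert\,\varepsilon$, and $\lVert\ell\rVert$ blows up as the approximation on $K$ is refined; obtaining approximants that are simultaneously uniformly bounded and accurate on compacts is exactly what the strict topology provides, so this ``cleaner alternative'' collapses back into the first argument rather than replacing it.
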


For our concrete purpose in Sect. \ref{sec:tightness}, we introduce the following robust signature, which is slightly different from the one we defined above as it is not of the form $\Lambda \circ \Sigo$.
\begin{proposition}\label{prop: a feasible tensor normalization}
Let $V$ be a separable Banach space. Let $\Phi : (\idx \to \vs) \to \Ta{1}{\vs}$ be the map such that for $x \in (\idx \to V)$,
$$
\Phi(x) = \delta_{\exp(-\|\Sig{x}\|-\|x\|_{\infty})}\Sig{x}.
$$
Then $\Phi$ is bounded continuous and injective.
\end{proposition}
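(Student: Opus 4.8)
The plan is to verify the three properties in turn, treating boundedness and continuity as short estimates and reserving the real idea for injectivity. Throughout I would write $\lambda(x) := \exp(-\lVert \Sig{x}\rVert_1 - \lVert x \rVert_\infty)$, so that $\Phi(x) = \delta_{\lambda(x)}\Sig{x}$, and recall that the norm on $\Ta{1}{V}$ is the $\ell^1$-sum over homogeneous levels, $\lVert t \rVert_1 = \sum_{m \ge 0}\lVert \pi_m t\rVert$, while $\delta_\lambda$ multiplies the degree-$m$ part by $\lambda^m$. Since $\pi_0 \Sig{x} = 1$ we always have $\lVert \Sig{x}\rVert_1 \ge 1$, hence $\lambda(x) \le e^{-1} < 1$. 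For boundedness I would split off the degree-$0$ term and use $\lambda(x)^m \le \lambda(x)$ for $m \ge 1$ to get
\begin{align}
\lVert \Phi(x)\rVert_1 = 1 + \sum_{m \ge 1}\lambda(x)^m \lVert \pi_m \Sig{x}\rVert \le 1 + \lambda(x)\lVert \Sig{x}\rVert_1 \le 1 + \sup_{a \ge 0} a e^{-a} = 1 + e^{-1},
\end{align}
where the last inequality uses $\lambda(x) \le e^{-\lVert \Sig{x}\rVert_1}$. This bound is uniform in $x$, which gives boundedness.

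For continuity I would first note that $x \mapsto \Sig{x}$ is continuous into $\Ta{1}{V}$ (it is built from the tensor exponential and the convolution product, both continuous), and that $x \mapsto \lVert \Sig{x}\rVert_1$ and $x \mapsto \lVert x\rVert_\infty$ are continuous, so $\lambda$ is continuous and strictly positive. Given $x_n \to x$, set $\lambda_n = \lambda(x_n) \to \lambda = \lambda(x)$ and estimate
\begin{align}
\lVert \Phi(x_n) - \Phi(x)\rVert_1 \le \lVert \delta_{\lambda_n}(\Sig{x_n}-\Sig{x})\rVert_1 + \lVert (\delta_{\lambda_n}-\delta_\lambda)\Sig{x}\rVert_1.
\end{align}
The first term is at most $\lVert \Sig{x_n}-\Sig{x}\rVert_1 \to 0$ because $\lambda_n^m \le 1$, and the second term, equal to $\sum_{m} |\lambda_n^m - \lambda^m|\,\lVert \pi_m \Sig{x}\rVert$, tends to $0$ by dominated convergence, the summand being dominated by $2\lVert \pi_m \Sig{x}\rVert$, which is summable since $\Sig{x} \in \Ta{1}{V}$. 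Hence $\Phi$ is continuous.

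The crux is injectivity, and the key observation is that the time augmentation lets one read off the dilation factor. Indeed the degree-$1$ part of the signature is $\pi_1 \Sig{x} = \sum_{t \in \idx}\Delta_t x = (|\idx|, x(T)) \in \R \oplus V$, whose $\R$-component equals $|\idx|$ independently of $x$. Let $\ell$ be the bounded linear functional on $\Ta{1}{V}$ extracting this $\R$-component of the degree-$1$ part; then $\ell(\Phi(x)) = \lambda(x)\,|\idx|$, so $\lambda(x) = \ell(\Phi(x))/|\idx|$ is recovered from $\Phi(x)$ alone. Consequently, if $\Phi(x) = \Phi(y)$ then $\lambda(x) = \lambda(y) =: \lambda_0 > 0$; comparing degrees gives $\lambda_0^m \pi_m \Sig{x} = \lambda_0^m \pi_m \Sig{y}$ for every $m$, and dividing by $\lambda_0^m > 0$ yields $\Sig{x} = \Sig{y}$, whence $x = y$ by injectivity of the signature (Theorem~\ref{cor:injective}, item~\ref{itm:inj}).

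I expect the main obstacle to be conceptual rather than computational: one must resist trying to recover $\lambda(x)$ from its defining formula, which entangles $\lVert \Sig{x}\rVert_1$ and $\lVert x\rVert_\infty$ and is awkward to invert, and instead exploit that the fixed, nonzero time-coordinate of the first signature level exposes $\lambda(x)$ linearly. Once this is seen, the remaining steps are routine.
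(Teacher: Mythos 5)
Your proof is correct, and for boundedness and continuity it is essentially the paper's argument: the same split $\|\delta_{\lambda_n}(\Sig{x_n}-\Sig{x})\| + \|(\delta_{\lambda_n}-\delta_{\lambda})\Sig{x}\|$, the same use of $\lambda_n\le 1$ on the first term, and the same dominated-convergence step (justified by summability of the levels of $\Sig{x}$) on the second. The injectivity step rests on the same key observation as the paper's --- the time augmentation forces the degree-one part of $\Sig{x}$ to carry a fixed, nonzero time coordinate, which exposes the dilation factor --- but your execution differs in a way worth noting. The paper reduces $\Phi(x)=\Phi(y)$ to $\delta_c\Sig{x}=\Sig{y}$ with $c$ the ratio of the two normalization constants, invokes the path-scaling identity $\delta_c\Sig{x}=\Sig{cx}$ (cited from an external source), and compares first levels $(cT,cx_T)=(T,y_T)$ to force $c=1$. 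You instead recover $\lambda(x)$ linearly from $\Phi(x)$ itself, via the functional reading off the time coordinate at degree one, and then divide degree by degree; this needs nothing beyond the trivial fact that $\delta_{\lambda}$ acts by $\lambda^m$ in degree $m$, so your variant is fully self-contained, whereas the paper's route makes explicit the geometric meaning of the dilation as a rescaling of the time-extended path. Both arguments then conclude $\Sig{x}=\Sig{y}$ and finish with Chen's injectivity of $\Sigo$, exactly as you do.
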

\begin{proof}
Let $\mathbbm{1}$ denote the neutral element $(1,0,\ldots)$ in $\Ta{1}{\vs}$ with respect to the tensor product.\\
The boundedness of $\Phi$ is clear, because for any $a \in [0,1]$ it holds that $\|\delta_a \Sig{x} -\mathbbm{1}\| \le a\|\Sig{x}\|$,  inserting $a = \exp(-\|\Sig{x}\|-\|x\|_{\infty})$ we indeed get a uniform bound for $\Phi$. To show the continuity of $\Phi$, note that for $x^n$ converges to $x$ in $\idx \to \vs$,  we have
\begin{align*}
\|\Phi(x^n) - \Phi(x)\| \le &\|\delta_{\exp(-\|\Sig{x^n}\|-\|x^n\|_{\infty})}\Sig{x^n} - \delta_{\exp(-\|\Sig{x^n}\|-\|x^n\|_{\infty})}\Sig{x}\| \\
&+ \|\delta_{\exp(-\|\Sig{x^n}\|-\|x^n\|_{\infty})}\Sig{x} - \delta_{\exp(-\|\Sig{x}\|-\|x\|_{\infty})}\Sig{x}\|.
\end{align*}
The first term on the right hand side converges to $0$ as it is bounded by $\|\Sig{x^n} - \Sig{x}\|$ which vanishes as $n \to \infty$ by the continuity of $\Sigo$; the second term on the right hand side also tends to $0$ by dominated convergence as $\Sig{x}$ has a factorial decay in its tail (cf. \cite[Theorem 3.1.2]{Lyons02}). For the injectivity of $\Phi$, let us assume that $\Phi(x) = \Phi(y)$ for $x,y \in \idx \to \vs$. Using the relation that $\delta_a \delta_b \Sig{x} = \delta_{ab} \Sig{x}$ for all $a,b \in \R$ it implies that
$$
\delta_c \Sig{x} = \Sig{y},
$$
where $c = \exp(\|\Sig{y}\| + \|y\|_\infty - \|\Sig{x}\| - \|x\|_\infty)$. Then by \cite[Exercise 7.55]{Fritz10} we have $\delta_c(\Sig{x}) = \Sig{cx} = \Sig{y}$. However, keeping in mind that we included time component into the definition of $\Sigo$, it holds that the projection of $\Sig{cx}$ to the first level $\R \oplus \vs$ is equal to $(cT, cx_T)$ while the counterpart for $\Sig{y}$ is $(T,y_T)$. Therefore we must have $cT = T$, i.e., $c = 1$. Consequently it follows that $\Sig{x} = \Sig{y}$ and also $x=y$ by the injecvitity of $\Sigo$ (see Theorem \ref{cor:injective}).
\end{proof}
\begin{remark}
Note that the injectivity of $\Phi$ depends crucially on the fact that we include time component into the definition of signature map, and it may not be true if one uses signature without time extension. In the latter case one has to apply the tensor normalization introduced in \cite{Oberhauser18}. Also note that we include $\|x\|_\infty$ into the dilation for a special technical reason, see discussion in the next section.
\end{remark}
\section{Feature Maps, MMDs and Weak Convergence}\label{app: signature, feature map and MMD}
In this Section we provide the necessary background for the robust signature map $\sigrno{r}$ that we use to deal with non-compactness, see Section~\ref{sec:tensor normalization}. 
Central to our argument is to exploit a duality between functions and measures via a ``universal feature map''.
In the non-compact case this duality can be subtle to handle, see \cite{Simon20} for an overview. 
\subsection{Universality and Characteristicness}
\begin{definition}\label{def: universal feature map}
Let $\mathcal{X}$ be a topological space and $E$ be a topological vector space. We call any map $\Phi; \mathcal{X} \rightarrow E$ a feature map. Moreover, for a given topological vector space $\cF \subset \R^{\mathcal{X}}$, we say that a feature map $\Phi$ is 
\begin{enumerate}
	\item universal to $\cF$, if the map
	$$
	\iota: E^\prime \rightarrow \R^{\mathcal{X}}, \quad \ell \mapsto \langle \ell, \Phi(\cdot)\rangle
	$$
	has a dense image in $\cF$, where $E^\prime$ denotes the topological dual of $E$.
	\item characteristic to a subset $\cP \subset \cF^\prime$ if the map
	$$
	\kappa: \cP \rightarrow (E^\prime)^*, \quad D \mapsto [\ell \mapsto D(\langle \ell, \Phi(\cdot) \rangle]
	$$
	is injective, where $(E^\prime)^*$ denotes the algebraic dual of $E^\prime$.
\end{enumerate}
\end{definition}
The following duality is a direct consequence of the Hahn--Banach Theorem, see e.g.~\cite[Theorem 2.3]{Oberhauser18}.
\begin{theorem}\label{thm: duality of universality and characteristicness}
If $\cF$ is a locally convex space, then a feature map $\Phi$ is universal to $\cF$ if and only if $\Phi$ is characteristic to $\cF^\prime$.
\end{theorem}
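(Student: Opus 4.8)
The plan is to recognize the statement as the standard duality between density of a subspace and triviality of its annihilator, repackaged through the definitions of universality and characteristicness, with local convexity entering only at the Hahn--Banach step. Throughout I would work with the linear subspace $W := \iota(E^\prime) = \{\langle \ell, \Phi(\cdot)\rangle : \ell \in E^\prime\} \subseteq \cF$, which is precisely the image of the map $\iota$ from Definition~\ref{def: universal feature map}. By definition, $\Phi$ is universal to $\cF$ exactly when $W$ is dense in $\cF$, so the left-hand side of the equivalence is just the statement $\overline{W} = \cF$.

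Next I would unwind characteristicness in the case $\cP = \cF^\prime$. For $D \in \cF^\prime$, the image $\kappa(D) \in (E^\prime)^*$ is the functional $\ell \mapsto D(\langle \ell, \Phi(\cdot)\rangle)$, so $\kappa(D) = 0$ holds if and only if $D$ annihilates every generator $\langle \ell, \Phi(\cdot)\rangle$ of $W$, i.e. if and only if $D$ lies in the annihilator $W^\perp := \{D \in \cF^\prime : D|_{W} = 0\}$. Since $\kappa$ is linear, it is injective precisely when its kernel is trivial, so $\Phi$ is characteristic to $\cF^\prime$ exactly when $W^\perp = \{0\}$.

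It then remains to connect the two reformulations $\overline{W} = \cF$ and $W^\perp = \{0\}$, and this is exactly the point where the hypothesis is used: the standard corollary of the Hahn--Banach separation theorem asserts that in a locally convex space a linear subspace $W$ is dense if and only if the only continuous linear functional vanishing on $W$ is the zero functional, that is $\overline{W} = \cF \iff W^\perp = \{0\}$. Combining this with the two reformulations above yields both directions of the claimed equivalence simultaneously.

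The argument has no genuine obstacle; the two places that require care rather than difficulty are, first, checking at the outset that $\iota$ indeed maps $E^\prime$ into $\cF$ (so that $\kappa(D) = D \circ \iota$ is well defined and the notion of $D$ annihilating $W$ makes sense), and second, the invocation of Hahn--Banach itself: the separation argument that produces a nonzero continuous functional vanishing on a proper closed subspace needs $\cF$ to be locally convex, which is exactly the standing assumption. I would cite the reference already used in the excerpt (\cite[Theorem 2.3]{Oberhauser18}) for the precise form of this corollary.
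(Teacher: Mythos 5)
Your proof is correct and follows exactly the route the paper itself indicates: the paper gives no explicit argument but states the theorem is ``a direct consequence of the Hahn--Banach Theorem'' (citing \cite[Theorem 2.3]{Oberhauser18}), and your reduction of universality to density of $W=\iota(E^\prime)$, characteristicness to $W^\perp=\{0\}$, and the bridge via the Hahn--Banach annihilator criterion in locally convex spaces is precisely that argument spelled out.
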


\subsection{Robust Signature Features and their Topology}
Put in our context, the feature map is the (robust) signature map $\sigrno{r}$.
\begin{theorem}\label{thm: properties of signature and normalzied signature}
  Define $\Delta(v) := 1 \otimes v + v \otimes 1$  for $v \in \Ban$.
  Then $(\Ta{1}{\Ban}, \otimes, \Delta)$ is a Hopf algebra and the co-domain of
  both the signature map $\Sigo$ and the robust signature map $\sigrno{1}$ is the set of group-like elements 
  \[
    G \coloneqq \{g \in \Ta{1}{\Ban}: \Delta g = g \otimes g\}\subset \Ta{1}{\Ban},
    \]
    that is
    \begin{align}\label{eq: codomain grouplike}
      \Sigo, \sigrno{1} \,:\, (\idx \rightarrow E) \rightarrow G \subset \Ta{1}{\Ban}.
    \end{align}
Moreover, both these maps are continuous and injective.
\end{theorem}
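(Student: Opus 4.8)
The plan is to treat the three assertions separately, leaning on classical Hopf-algebraic facts for the algebraic structure and on results already established in the excerpt for the analysis. For the Hopf algebra claim I would work first on the dense subalgebra $\ata{1}{\Ban}\subset\Ta{1}{\Ban}$ of Definition~\ref{def:norm}. By the universal property of the free (tensor) algebra recorded in Appendix~\ref{app:tensor algebras}, the prescription $v\mapsto v\otimes 1+1\otimes v$ on degree-one generators extends uniquely to an algebra homomorphism $\Delta$; together with the counit $\epsilon$ projecting onto the degree-zero component this makes $\ata{1}{\Ban}$ a graded bialgebra. Being connected (degree-zero part equal to $\R$), it admits an antipode by the standard recursion on the grading, so $(\ata{1}{\Ban},\otimes,\Delta)$ is a Hopf algebra. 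As all three structure maps are homogeneous and bounded for the admissible norm, they extend continuously to the completion $\Ta{1}{\Ban}$ — the coproduct now valued in the completed tensor product — which gives the asserted Hopf algebra structure; I would cite a standard reference for these classical facts.

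For the group-like claim the key point is that each increment $\Delta_t x$ is a degree-one, hence primitive, element, so $\exp(\Delta_t x)$ is group-like: since $\Delta$ is an algebra homomorphism and $\Delta_t x\otimes 1$ commutes with $1\otimes\Delta_t x$, one computes $\Delta(\exp v)=\exp(\Delta v)=\exp(v\otimes1)\exp(1\otimes v)=\exp v\otimes\exp v$. Group-like elements form a group under $\otimes$, because $\Delta(gh)=\Delta g\,\Delta h=(g\otimes g)(h\otimes h)=gh\otimes gh$; consequently the finite product $\Sigo(x)=\prod_{t\in\idx}\exp(\Delta_t x)$ is group-like, which is exactly \eqref{eq: codomain grouplike} for $\Sigo$. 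For $\sigrno{1}=\Lambda\circ\Sigo$ I would observe that a dilation $\delta_c$ with $c>0$ scales degree $m$ by $c^m$ and therefore satisfies $\Delta\circ\delta_c=(\delta_c\otimes\delta_c)\circ\Delta$ (both sides are algebra homomorphisms agreeing on generators), whence $\Delta(\delta_c g)=\delta_c g\otimes\delta_c g$ for group-like $g$. Since $\Lambda(\mathbf t)=\delta_{\lambda(\mathbf t)}\mathbf t$ acts by a pointwise dilation, it preserves group-likeness, so $\sigrno{1}(x)\in G$ as well.

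Finally, continuity of $\Sigo$ follows since it is a finite $\otimes$-product of the continuous maps $x\mapsto\exp(\Delta_t x)$ (the exponential series converging absolutely by the factorial decay noted in Appendix~\ref{app:tensor algebras}) and $\otimes$ is continuous for the admissible norm; injectivity of $\Sigo$ is precisely Theorem~\ref{cor:injective} (item~\ref{itm:inj}). For $\sigrno{1}=\Lambda\circ\Sigo$, continuity and injectivity are inherited by composing those of $\Sigo$ with the continuity and injectivity of the tensor normalization $\Lambda$ (Definition~\ref{def: tensor normalization}). The main obstacle I anticipate is not a single estimate but making the Hopf-algebraic statement rigorous on the completion: one must verify that $\Delta$ extends continuously into the completed tensor product and that \emph{group-like} is well posed there, i.e.\ that $g\otimes g$ lies in $\Ta{1}{\Ban}\,\widehat{\otimes}\,\Ta{1}{\Ban}$ and that $G$ is closed, so that the limit defining $\Sigo(x)$ and the dilation defining $\sigrno{1}(x)$ genuinely land in $G$.
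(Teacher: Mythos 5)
Your proposal is correct in substance, and it follows the same classical route that the paper compresses into a one-line citation (``this is classical for $\Sigo$'', with \cite[Sect.~5.1]{Oberhauser18} invoked for $\sigrno{1}$): primitivity of the increments $\Delta_t x$ makes each $\exp(\Delta_t x)$ group-like, group-like elements are stable under $\otimes$, injectivity and continuity of $\Sigo$ come from Theorem~\ref{cor:injective} and submultiplicativity of the norm, and continuity/injectivity of $\sigrno{1}=\Lambda\circ\Sigo$ follow by composition since $\Lambda$ is continuous and injective by Definition~\ref{def: tensor normalization}. What you add beyond the paper is worthwhile: the identity $\Delta\circ\delta_c=(\delta_c\otimes\delta_c)\circ\Delta$, valid because $\Delta$ is graded, shows at once that \emph{any} normalization of the form $\mathbf{t}\mapsto\delta_{\lambda(\mathbf{t})}\mathbf{t}$ preserves group-likeness --- this is a clean, purely algebraic substitute for the paper's appeal to ``integration by parts'' in \cite{Oberhauser18}, and it also covers the variant normalization of Proposition~\ref{prop: a feasible tensor normalization} whose scaling factor depends on $x$ and not only on $\Sig{x}$.

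There is, however, one step that fails as written: the claim that all three structure maps are ``bounded for the admissible norm'' and hence ``extend continuously to the completion''. The product is indeed submultiplicative, but the deshuffle coproduct is \emph{not} bounded: on degree $m$ one has $\Delta(v^{\otimes m})=\sum_{k=0}^{m}\binom{m}{k}\,v^{\otimes k}\otimes v^{\otimes(m-k)}$, whose norm is $2^{m}\lVert v\rVert^{m}$ since the terms lie in distinct graded components, so the restriction of $\Delta$ to degree $m$ has operator norm of order $2^{m}$ and $\Delta$ admits no continuous linear extension from $\ata{1}{\Ban}$ to $\Ta{1}{\Ban}$ with values in the completed tensor product. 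You flag exactly this point as the remaining obstacle, but the verification you propose cannot succeed in that form. The standard repair keeps everything else intact: define $\Delta$ degree-wise, so that it maps $\Ta{1}{\Ban}$ into the formal product $\prod_{i,j}(\R\oplus\Ban)^{\otimes i}\otimes(\R\oplus\Ban)^{\otimes j}$, and read the group-like identity $\Delta g=g\otimes g$ there (``Hopf algebra'' is then meant in the graded/topological sense, not in the category of Banach algebras --- the paper is equally loose on this point). Note that $g\otimes g$ is a pure tensor and so never poses a problem; moreover $G$ defined degree-wise is norm-closed, since norm convergence implies component-wise convergence and each component of $\Delta g-g\otimes g$ depends continuously on finitely many components of $g$. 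Alternatively, observe that $\Delta g$ is a norm-convergent series whenever the coefficients of $g$ remain summable after scaling by $2^{m}$; this holds for every $\Sig{x}$ by the factorial decay of signature coefficients, and for $\sigrno{1}(x)=\delta_{\lambda}\Sig{x}$ a fortiori, since a fixed dilation preserves factorial decay. With either reading, the statement \eqref{eq: codomain grouplike} and the rest of your argument go through unchanged.
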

\begin{proof}
  This is classical for $\Sigo$ and follows for $\sigrno{1}$ by integration by parts, see~\cite[Sect. 5.1]{Oberhauser18}. 
\end{proof}
The following proposition is crucial for this present paper.
\begin{proposition}\label{prop: central proposition}
  Assume that $\mathcal{X}$ is metrizable.
 Then for any continuous injective mapping $\varphi: \mathcal{X} \rightarrow \Ban$, where $\Ban$ is a Banach space, the map \[\Phi := \Sigo^n_1 \circ \varphi: \mathcal{X}^I \rightarrow \Ta{1}{\Ban}\] is universal to $C_b(\mathcal{X}^I, \R)$ and characteristic to $C_b(\mathcal{X}^I, \R)^\prime$. In particular, two finite regular Borel measures $\mu$ and $\nu$ on $\mathcal{X}^I$ are equal if and only if $\int \Phi d\mu = \int \Phi d\nu$.
\end{proposition}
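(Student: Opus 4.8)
The plan is to deduce all three assertions from the universality–characteristicness duality of Theorem~\ref{thm: duality of universality and characteristicness} together with a Stone--Weierstrass argument, so that the two named properties and the concluding statement on measures collapse to essentially one claim. First I would fix the functional-analytic setting: equip $\cF = C_b(\mathcal{X}^I,\R)$ with the strict topology, under which it is locally convex and its dual $\cF'$ is exactly the space of finite regular Borel measures on $\mathcal{X}^I$ (the framework of \cite{Oberhauser18}). Since $\mathcal{X}$ is metrizable, the finite product $\mathcal{X}^I$ is metrizable and hence completely regular, which is what the strict-topology Stone--Weierstrass theorem requires. Because $\cF$ is locally convex, Theorem~\ref{thm: duality of universality and characteristicness} shows that universality of $\Phi$ to $\cF$ is equivalent to characteristicness of $\Phi$ to $\cF'$, so it suffices to prove universality. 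I also record that $\Phi = \sigrno{1}\circ\varphi$ is continuous, injective and bounded: $\varphi$ induces a continuous injective coordinatewise map $\mathcal{X}^I\to(\idx\to\Ban)$, the robust signature $\sigrno{1}$ is continuous and injective with values in the group-like elements by Theorem~\ref{thm: properties of signature and normalzied signature}, and the normalization built into $\sigrno{1}$ forces its image into the closed unit ball of $\Ta{1}{\Ban}$.

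Next I would exhibit a point-separating subalgebra. Let $E:=\Ta{1}{\Ban}$ and let $\mathcal{A}_0$ be the linear span of the functions $x\mapsto\langle\ell,\Phi(x)\rangle$ as $\ell$ ranges over the finitely supported functionals in $E'$. Each such function is bounded (by $\|\ell\|$, since $\Phi$ takes values in the unit ball) and continuous, so $\mathcal{A}_0\subset\cF$. The key algebraic input is that $\Phi(x)$ is group-like for every $x$ (Theorem~\ref{thm: properties of signature and normalzied signature}, using that the dilation entering the normalization is a Hopf-algebra automorphism and hence preserves group-likeness): for group-like $g$ one has $\langle\ell_1,g\rangle\langle\ell_2,g\rangle = \langle\ell_1\otimes\ell_2,\Delta g\rangle = \langle\ell_{12},g\rangle$, where $\ell_{12}$ is the shuffle product of $\ell_1$ and $\ell_2$, which is again finitely supported. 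Hence $\mathcal{A}_0$ is closed under pointwise multiplication, i.e.\ a subalgebra; it contains the constants since the degree-$0$ coordinate of any signature equals $1$. It separates points, because if $\Phi(x)\neq\Phi(y)$ then these two group-like elements already differ in some finite-degree coordinate, so some finitely supported $\ell$ distinguishes them, and injectivity of $\Phi$ converts distinct points of $\mathcal{X}^I$ into distinct values.

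Finally I would invoke the Stone--Weierstrass theorem for the strict topology: a subalgebra of $C_b(\mathcal{X}^I,\R)$ that contains the constants and separates points is dense. Applied to $\mathcal{A}_0$ this gives density of $\{\langle\ell,\Phi(\cdot)\rangle : \ell\in E'\}$ in $\cF$, i.e.\ universality of $\Phi$ to $\cF$, and then Theorem~\ref{thm: duality of universality and characteristicness} yields characteristicness to $\cF'$. For the concluding statement, note that for a finite regular Borel measure $\mu$ the Bochner integral $\int\Phi\,d\mu\in\Ta{1}{\Ban}$ exists (as $\Phi$ is bounded and continuous) and satisfies $\kappa(\mu)(\ell)=\int\langle\ell,\Phi\rangle\,d\mu=\langle\ell,\int\Phi\,d\mu\rangle$; hence $\kappa(\mu)=\kappa(\nu)$ if and only if $\int\Phi\,d\mu=\int\Phi\,d\nu$, and characteristicness gives $\mu=\nu$ precisely in that case. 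I expect the main obstacle to be the non-compactness of $\mathcal{X}^I$: ordinary Stone--Weierstrass in the uniform norm is unavailable, which is exactly why one must both pass to the strict topology (so that $\cF'$ is the space of measures) and use the robust signature $\sigrno{1}$ in place of $\Sigo$ — the normalization is what places the separating algebra inside $C_b$ in the first place.
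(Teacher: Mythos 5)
Your proposal is correct and follows essentially the same route as the paper's proof: both use the group-like property of the robust signature to turn the linear functionals $\langle\ell,\Phi(\cdot)\rangle$ into a point-separating subalgebra of $C_b(\mathcal{X}^I,\R)$ (via the shuffle/coproduct identity), invoke the strict-topology Stone--Weierstrass theorem of \cite{Oberhauser18} to get universality, and then apply the duality of Theorem~\ref{thm: duality of universality and characteristicness} for characteristicness and the measure-equality conclusion. The only differences are presentational — you spell out the coproduct computation, the constants, and the Bochner-integral step that the paper delegates to citations of \cite{Oberhauser18}.
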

\begin{proof}
Let $x \in (\idx \rightarrow \mathcal{X})$ be a (discrete time) path taking values in $\mathcal{X}$. Then $\varphi \circ x$ is a (discrete time) path taking values in $\Ban$. Thanks to Theorem \ref{thm: properties of signature and normalzied signature} the map $\Phi = \Sigo^n_1 \circ \varphi$ is continuous and injective, and takes values in $G \subset \Ta{1}{\Ban}$. Define $L := \bigoplus_{m \ge 0}(\Ban^{\otimes m})^\star$, which we identify with a dense subspace of $\Ta{1}{\Ban}^\star$ via $\ell(\mathbf{t}) = \sum_{m \ge 0}\langle \ell^m, \mathbf{t}^m \rangle$, and define $\tilde \cF := \{ \ell \circ \Phi: \ell \in L \}$. Clearly, $\tilde \cF \subset \cF = C_b(\mathcal{X}^I, \R)^\prime$, and the injectivity of $\Phi$ implies that $\tilde \cF$ separates the points in $\mathcal{X}^\idx$. Furthermore, the algebraic condition on $G$ implies that $\tilde \cF$ is closed under multiplication (when $\Ban = \R^d$, this is equivalent to the shuffle product equation in \cite[(2.6)]{Lyons07}). This implies that $\tilde \cF$ satisfies all conditions in \cite[Theorem 2.6, (2)]{Oberhauser18} and is therefore dense in $C_b(\mathcal{X}^I, \R)$ with respect to the strict topology by \cite[Theorem 2.6]{Oberhauser18}. This means that $\tilde \cF$ is universal to $C_b(\mathcal{X}^I, \R)$ and characteristic to $C_b(\mathcal{X}^I, \R)^\prime$ by \cite[Theorem 2.3]{Oberhauser18}. The last assertion then follows immediately. 
\end{proof}
\subsection{Kernelized Maximum Mean Discrepancies}
Following \cite{Oberhauser18} we now use $\sigrno{1}$ to define a kernel and show that the associated Maximum Mean Discrepancy (MMD) metrizes weak convergence when the state space $\Ban =\R^d$. 

\begin{proposition}\label{prop: normalized signature satisfies nice conditions}
  Let $V = \R^d$ 
  Define
  \begin{align}
 \mathbf{k}:(\idx \to \Ban) \times (\idx \to \Ban) \to \R,\quad    \mathbf{k}(x,y) := \langle \sigrn{1}{x}, \sigrn{1}{y} \rangle - 1.
  \end{align}
  Then \begin{enumerate}
	\item $\mathbf{k}$ is a continuous, bounded, positive definite function.
	\item $\mathbf{k}$ is characteristic to $C_b(\idx \to \vs)^\prime$.
	\item The the reproducing kernel Hilber space (RKHS) generated by $\mathbf{k}$, $\cH_{\mathbf{k}}$, is a subset the space of all continuous functions on $\idx \to \vs$ vanishing at infinity,  \[\cH_{\mathbf{k}} \subset C_0(\idx \to \vs).\]
\end{enumerate} 
\end{proposition}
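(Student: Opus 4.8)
The plan is to reduce all three assertions to structural properties of the robust signature feature map $\sigrn{1}{\cdot}$, which I take to be the normalization $\Phi(x)=\delta_{\exp(-\|\Sig{x}\|-\|x\|_\infty)}\Sig{x}$ of Proposition~\ref{prop: a feasible tensor normalization}. The starting observation is that the degree-$0$ component of $\sigrn{1}{x}$ is always the unit $\mathbbm{1}$, since $\Sig{x}$ has scalar part $1$ and the dilation $\delta_\lambda$ fixes degree $0$. Writing $\Psi(x):=\sigrn{1}{x}-\mathbbm{1}$ for the projection onto degrees $\ge 1$ and using that the graded pieces of the Hilbert space $\Ta{1}{\Ban}$ (Definition~\ref{def: Hilbert tensor algebra}) are mutually orthogonal, I obtain
\[
\langle \sigrn{1}{x},\sigrn{1}{y}\rangle = 1 + \langle \Psi(x),\Psi(y)\rangle, \qquad\text{hence}\qquad \mathbf{k}(x,y)=\langle\Psi(x),\Psi(y)\rangle .
\]
This exhibits $\mathbf{k}$ as the Gram kernel of the feature map $\Psi$, which is positive definite by construction. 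Continuity is inherited from the continuity of $\sigrn{1}{\cdot}$ (Proposition~\ref{prop: a feasible tensor normalization}) and of the inner product. For boundedness I would record the quantitative decay estimate, which is also the key to item~(3):
\[
\|\Psi(x)\| \;\le\; \exp\!\bigl(-\|\Sig{x}\|-\|x\|_\infty\bigr)\,\|\Sig{x}\| \;\le\; e^{-1}\exp(-\|x\|_\infty),
\]
using $\|\delta_a\Sig{x}-\mathbbm{1}\|\le a\|\Sig{x}\|$ from the proof of Proposition~\ref{prop: a feasible tensor normalization}, the elementary bound $te^{-t}\le e^{-1}$, and the fact that the Hilbert norm is dominated by the summation norm $\|\cdot\|_1$. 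In particular $\|\Psi(x)\|\le e^{-1}$, so $|\mathbf{k}(x,y)|\le\|\Psi(x)\|\,\|\Psi(y)\|\le e^{-2}$, settling item~(1).

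Item~(3) is the heart of the matter and the step I expect to be the main obstacle. The decisive structural fact is that $\idx\to\Ban=(\R^d)^{T+1}$ is finite-dimensional, hence locally compact, so that vanishing at infinity literally means decay as $\|x\|_\infty\to\infty$; this is exactly why the extra factor $\exp(-\|x\|_\infty)$ was inserted into the dilation in Proposition~\ref{prop: a feasible tensor normalization}, and I would emphasize that without it the conclusion fails. From the displayed estimate $\|\Psi(x)\|\to 0$ as $\|x\|_\infty\to\infty$. Every element of the pre-Hilbert space spanned by $\{\mathbf{k}(\cdot,y)\}$ is of the form $x\mapsto\langle w,\Psi(x)\rangle$ with $w$ in the closed linear span of $\{\Psi(y)\}$; such a function is continuous and, by the decay estimate and Cauchy--Schwarz, satisfies $|\langle w,\Psi(x)\rangle|\le\|w\|\,\|\Psi(x)\|\to 0$ at infinity, hence lies in $C_0(\idx\to\Ban)$. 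Since convergence in the RKHS norm dominates uniform convergence through the reproducing inequality $|f(x)|\le\|f\|_{\cH_{\mathbf{k}}}\sqrt{\mathbf{k}(x,x)}$ together with the uniform bound on $\mathbf{k}$, and since $C_0(\idx\to\Ban)$ is closed under uniform limits, the inclusion $\cH_{\mathbf{k}}\subset C_0(\idx\to\Ban)$ follows by passing to limits.

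Finally, for item~(2) I would invoke Proposition~\ref{prop: central proposition} with $\mathcal{X}=\Ban=\R^d$ and $\varphi=\id$, which gives that $\sigrn{1}{\cdot}$ is injective and that the functions $x\mapsto\langle\ell,\sigrn{1}{x}\rangle$ form a point-separating subalgebra. Since $\Psi=\sigrn{1}{\cdot}-\mathbbm{1}$ is again injective and the grouplike property of the signature ensures that $\{\langle\ell,\Psi\rangle\}$ is a point-separating subalgebra which, by item~(3), sits inside $C_0(\idx\to\Ban)$, the Stone--Weierstrass theorem makes it dense there. Consequently, if two finite regular Borel measures satisfy $\int\mathbf{k}(\cdot,x)\,d\mu=\int\mathbf{k}(\cdot,x)\,d\nu$, i.e.\ $\int\Psi\,d\mu=\int\Psi\,d\nu$, then the signed measure $\mu-\nu$ annihilates the dense subalgebra and hence all of $C_0(\idx\to\Ban)$, forcing $\mu=\nu$. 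Thus $\mathbf{k}$ is characteristic to $C_b(\idx\to\Ban)^\prime$, which completes the proof; the only delicate point, beyond the decay estimate already isolated, is the passage from ``agreement in degrees $\ge 1$'' to recovery of the full measure, which I handle precisely through this $C_0$-density argument.
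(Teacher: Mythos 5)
Your decomposition $\mathbf{k}(x,y)=\langle\Psi(x),\Psi(y)\rangle$ with $\Psi=\sigrn{1}{\cdot}-\mathbbm{1}$ is exactly the paper's starting point, and your items (1) and (3) follow the paper's line: positive definiteness, boundedness and continuity come from Proposition~\ref{prop: a feasible tensor normalization}, and item (3) rests on the same decay estimate $\|\Psi(y)\|\le \exp(-\|\Sig{y}\|-\|y\|_{\infty})\|\Sig{y}\|$. The one difference in item (3) is that the paper, after establishing this decay, simply cites \cite[Lemma 4.1]{Simon20}, whereas you prove that lemma inline (kernel sections lie in $C_0$, the reproducing inequality plus boundedness of $\mathbf{k}$ makes RKHS convergence dominate uniform convergence, and $C_0$ is closed under uniform limits); that unpacking is correct and self-contained. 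Item (2) is where you genuinely diverge: the paper reduces characteristicness of $\mathbf{k}$ to characteristicness of the feature map $\sigrnhat{1}{\cdot}$ via \cite[Proposition 7.3]{Oberhauser18} and then reruns the strict-topology Stone--Weierstrass argument of Proposition~\ref{prop: central proposition} inside $C_b(\idx\to\vs)$, while you use the decay from item (3) to place the algebra $\{\langle\ell,\Psi(\cdot)\rangle \, : \, \ell\}$ inside $C_0(\idx\to\R^d)$, apply the classical Stone--Weierstrass theorem for locally compact spaces, and conclude with the Riesz representation theorem. Your route is more elementary and avoids the strict-topology machinery, but it uses local compactness of $(\idx\to\R^d)$ --- i.e.\ finite-dimensionality of $\Ban$ --- in an essential way; this is legitimate under the hypothesis $V=\R^d$ (and consistent with the paper's remark in Section~\ref{sec:tightness} that this hypothesis cannot be dropped), whereas the paper's $C_b$-argument is the one whose structure survives general separable $\Ban$. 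A further structural difference: in your write-up item (2) depends on item (3), while in the paper the two are independent.

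There is one genuine, though easily repaired, gap in item (2): the Stone--Weierstrass theorem for $C_0(X)$, $X$ locally compact, requires the subalgebra to separate points \emph{and} to vanish at no point of $X$; you verify only separation (via injectivity of $\Psi$). The second hypothesis is not automatic --- the functions in $C_0(\R)$ vanishing at the origin form a proper closed point-separating subalgebra --- so it must be checked. It does hold here: because the signature is time-extended, the degree-one time coordinate of $\Psi(x)$ equals $\exp(-\|\Sig{x}\|-\|x\|_{\infty})\,T\neq 0$, so pairing $\Psi$ with the corresponding coordinate functional yields an element of your algebra that is strictly positive everywhere. This is precisely the fact the paper's own proof records when it notes that $\sigrnhat{1}{x}$ retains a nonzero time component. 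With that one check added, your argument is complete.
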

\begin{proof}
  The first statement follows immediately from Proposition~\ref{prop: a feasible tensor normalization} as $\sigrno{r}$ is a bounded continuous mapping with values in $\cH^1(\vs)$.
  To see characteristicness, note that $\mathbf{k}(x,y) = \langle \sigrn{1}{x}-\mathbbm{1}, \sigrn{1}{y}-\mathbbm{1}\rangle$ for $\mathbbm{1} = (1,0,\ldots) \in \cH^1(\vs)$ the unit element.
  By~\cite[Proposition 7.3]{Oberhauser18} it then remains to show that ${ \sigrnhat{1}{x}} \coloneqq {\sigrn{1}{x}} -\mathbbm{1}$ is characteristic to $C_b(\idx \to \vs)^\prime$.
  However, this property is inherited from the corresponding property of $\sigrno{1}$, Proposition \ref{prop: central proposition}.
  To be precise, the construction of $\sigrno{1}$ ensures that $\sigrn{1}{x}$ is group--like.
  Consequently $\{\langle \ell, \sigrn{1}{x} \rangle: \ell \in \ata{1}{\Ban}\}$ forms an algebra, which in turn implies that $\{\langle \ell, { \sigrnhat{1}{x}} \rangle: \ell \in \ata{1}{\Ban}\}$ is also an algebra as ${ \sigrnhat{1}{x}}$ coincides with $\sigrnhat{1}{x}$ on $\Pi_{m=1}^\infty (\R \oplus \vs)^{\otimes m}$ and the projection of ${ \sigrnhat{1}{x}}$ to $(\R \oplus \vs)^{\otimes 0} \sim \R$ equals $0$. Furthermore, the boundedness and continuity of $\sigrno{1}$ ensures that $\{\langle \ell, \sigrnhat{1}{x} \rangle: \ell \in \ata{1}{\Ban}\} \subset C_b(\idx \to \vs)$; the injectivity guarantees that $\{\langle \ell, \sigrnhat{1}{x} \rangle: \ell \in \ata{1}{\Ban}\}$ separates points; finally, since each $\sigrnhat{1}{x}$ contains time component $\exp(-\|\Sig{x}\| - \|x\|_\infty) T \neq 0$ as we are using time extended signature, the set $\{\langle \ell, \sigrnhat{1}{\cdot} \rangle: \ell \in \ata{1}{\Ban}\}$ still contains constant functions. Hence, we can use a Stone--Weierstrass type argument as in Proposition \ref{prop: central proposition}, see also \cite[Theorem 2.6]{Oberhauser18}, to deduce that $\sigrnhat{1}{\cdot}$ is characteristic to $C_b(\idx \to \vs)^\prime$.

Finally, note that for $x \in \idx \to \vs$, one has $\lim_{\|y\|_\infty \to \infty} |\mathbf{k}(x,y)| = 0$, because
\begin{align*}
|\mathbf{k}(x,y)| = |\langle \sigrnhat{1}{x}, \sigrnhat{1}{y}\rangle| &\le \|\sigrnhat{1}{x} \| \|\sigrnhat{1}{y}\| \\
&= \|\sigrnhat{1}{x} \|\Big\|\delta_{\exp(-\|\Sig{y}\|-\|y\|_{\infty})}\Sig{y} - \mathbbm{1}\Big\|\\
&\le \|\sigrnhat{1}{x} \|\exp(-\|\Sig{y}\|-\|y\|_{\infty})\|\Sig{y}\|\\
&\to 0
\end{align*}
as $\|y\|_\infty \to \infty$.
Hence, in view of~\cite[Lemma 4.1]{Simon20} one can conclude that $\cH_{\mathbf{k}} \subset C_0(\idx \to \vs)$.
\end{proof}
We now conclude by \cite[Lemma 2.1]{Simon20} 

\begin{corollary}\label{cor: MMD characterizes weak convergence in appendix}
Let $\vs= \R^d$. Then 
$$
d_{\mathbf{k}}(\mu, \nu) = \Big\|\int \sigrn{r}{x} \mu(dx) - \int \sigrn{r}{x} \nu(dx)\Big\|  = \Big\|\esigrno{1}(\mu) - \esigrno{1}(\nu) \Big\|
$$
characterizes weak convergence.
\end{corollary}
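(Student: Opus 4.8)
The plan is to read this off directly from Proposition~\ref{prop: normalized signature satisfies nice conditions} together with the abstract metrization criterion \cite[Lemma 2.1]{Simon20}, since all the analytic work has already been carried out. The crucial structural fact, and the reason the hypothesis $\vs=\R^d$ cannot be dropped, is that the pathspace $\idx \to \R^d \cong (\R^d)^{|\idx|}$ is a locally compact, second countable Hausdorff space; this is precisely the setting in which the reproducing kernel Hilbert space theory of \cite{Simon20} applies and converts convergence of kernel mean embeddings into weak convergence.

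First I would record the identity $d_{\mathbf{k}}(\mu,\nu) = \|\esigrno{1}(\mu) - \esigrno{1}(\nu)\|$. By definition of the MMD associated to the positive definite kernel $\mathbf{k}$ from Proposition~\ref{prop: normalized signature satisfies nice conditions}, written as $\mathbf{k}(x,y) = \langle \sigrnhat{1}{x}, \sigrnhat{1}{y}\rangle$ with $\sigrnhat{1}{x} = \sigrn{1}{x} - \mathbbm{1}$, one has
\[
d_{\mathbf{k}}(\mu,\nu)^2 = \Big\| \int \sigrnhat{1}{x}\,\mu(dx) - \int \sigrnhat{1}{x}\,\nu(dx)\Big\|^2.
\]
Since $\mu$ and $\nu$ are probability measures, the constant shift $\mathbbm{1}$ integrates to the same value against both and cancels in the difference, so $\int \sigrnhat{1}{x}\,(\mu-\nu)(dx) = \int \sigrn{1}{x}\,(\mu-\nu)(dx) = \esigrno{1}(\mu) - \esigrno{1}(\nu)$, which yields the claimed formula.

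Next I would verify that the three hypotheses of \cite[Lemma 2.1]{Simon20} are exactly the conclusions of Proposition~\ref{prop: normalized signature satisfies nice conditions}: (i) $\mathbf{k}$ is continuous, bounded and positive definite; (ii) $\mathbf{k}$ is characteristic to $C_b(\idx \to \vs)^\prime$, so that $d_{\mathbf{k}}$ separates probability measures and is a genuine metric on $\pr(\idx \to \R^d)$; and (iii) the RKHS satisfies $\cH_{\mathbf{k}} \subset C_0(\idx \to \vs)$. With local compactness of $\idx \to \R^d$ in hand, \cite[Lemma 2.1]{Simon20} then gives that $d_{\mathbf{k}}(\mu_n,\mu) \to 0$ if and only if $\mu_n \to \mu$ weakly, which is the assertion.

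The easy direction (weak convergence $\Rightarrow d_{\mathbf{k}} \to 0$) follows already from boundedness and continuity of $\sigrnhat{1}{\cdot}$. The hard part is the converse, $d_{\mathbf{k}} \to 0 \Rightarrow$ weak convergence: this is where the containment $\cH_{\mathbf{k}} \subset C_0$ (rather than merely $\subset C_b$) and local compactness are indispensable, since control of integrals against $C_0$-functions only upgrades to full weak convergence on a locally compact space. This content is abstracted into \cite[Lemma 2.1]{Simon20}, so no work beyond checking hypotheses remains; the only point requiring care is that the $C_0$-containment in Proposition~\ref{prop: normalized signature satisfies nice conditions} genuinely used the $\|x\|_\infty$ term in the dilation of the robust signature, which is what forces the decay $|\mathbf{k}(x,y)| \to 0$ as $\|y\|_\infty \to \infty$.
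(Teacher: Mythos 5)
Your proposal is correct and follows exactly the paper's route: the paper likewise deduces the corollary immediately from Proposition~\ref{prop: normalized signature satisfies nice conditions} together with \cite[Lemma 2.1]{Simon20}, whose hypotheses (bounded continuous positive definite kernel, characteristicness, $\cH_{\mathbf{k}} \subset C_0(\idx \to \vs)$, and local compactness of $\idx \to \R^d$) are precisely what that proposition and the choice $\vs = \R^d$ supply. Your added details --- the cancellation of the unit $\mathbbm{1}$ against probability measures to identify $d_{\mathbf{k}}$ with $\|\esigrno{1}(\mu)-\esigrno{1}(\nu)\|$, and the observation that the $C_0$-containment (forced by the $\|x\|_\infty$ term in the dilation) is what makes the hard direction work --- are accurate elaborations of steps the paper leaves implicit.
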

\end{document}